\newcommand\mtiny[1]{\mbox{\tiny\ensuremath{#1}}}
\let\cl@chapter\undefined
\crefname{Lemma}{Lemma}{Lemmas}
\crefname{Lemma}{Lemma}{Lemmas}
\newcommand{\cR}{\mathbb{R}}
\newtheorem{obs}[theorem]{Observation}
\newcommand{\eps}{\epsilon}
\newcommand{\be}{\begin{equation}}
\newcommand{\benn}{\begin{equation*}}
\newcommand{\ee}{\end{equation}}
\newcommand{\eenn}{\end{equation*}}
\newcommand{\bes}{\begin{split}}
\newcommand{\ens}{\end{split}}
\newcommand{\tf}{{\tilde f}}
\newcommand{\tA}{{\tilde A}}
\newcommand{\tg}{{\tilde g}}
\newcommand{\tc}{{\tilde c}}
\newcommand{\tW}{{\tilde W}}
\newcommand{\tZ}{{\tilde Z}}
\newcommand{\tphi}{{\tilde \phi}}
\newcommand{\mE}{{\mathcal E}}
\newcommand{\df}{{\delta_f}}
\newcommand{\dA}{{\delta_A}}
\newcommand{\dg}{{\delta_g}}
\newcommand{\dc}{{\delta_c}}
\newcommand{\ef}{{\epsilon_f}}
\newcommand{\eA}{{\epsilon_A}}
\newcommand{\eg}{{\epsilon_g}}
\newcommand{\ec}{{\epsilon_c}}
\newcommand{\lam}{\lambda}
\newcommand{\grad}{\nabla}
\newcommand{\lh}{{\hat l}}
\newcommand{\pred}{{\mathbf{pred}}}
\newcommand{\ared}{{\mathbf{ared}}}
\newcommand{\vpred}{{\mathbf{vpred}}}
\newcommand{\hpred}{{\mathbf{hpred}}}
\newcommand{\EAcI}{{\mathcal{E}_{Ac}^{I}}}
\newcommand{\jn}[1]{{\color{blue}#1}}
\newcommand{\shig}[1]{{\color{magenta}#1}}
\begin{document}

\title{A Trust-Region Algorithm for Noisy Equality Constrained Optimization\thanks{Sun was supported by NSF grant DMS-1620022. Nocedal was supported by AFOSR grant FA95502110084 and ONR grant N00014-21-1-2675.}
}

\titlerunning{A Trust-Region Algorithm for Noisy Equality Constrained Optimization}    

\author{Shigeng Sun\and Jorge Nocedal 
}


\institute{Shigeng Sun \at
       Department of Engineering Sciences and Applied Mathematics, Northwestern University, Evanston, IL, USA \\
       \email{shigengsun2024@u.northwestern.edu}      
      \and
      Corresponding author: Jorge Nocedal \at
       Department of Industrial Engineering and Management Sciences, Northwestern University, Evanston, IL, USA\\
       \email{j-nocedal@northwestern.edu}
}

\date{Received: 02 September 2024 / Accepted: }

\maketitle

\begin{abstract}

This paper introduces a modified Byrd-Omojokun (BO) trust region algorithm to address the challenges posed by noisy function and gradient evaluations. The original BO method  was designed to solve equality constrained problems and it forms the backbone of some interior point methods for general large-scale constrained optimization, such as {\sc {\sc knitro}} \cite{ByrdNoceWalt06}. A key strength of the BO method is its robustness in handling problems with rank-deficient constraint Jacobians. The  algorithm proposed in this paper introduces a new criterion for accepting a step  and for updating the trust region that makes use of an estimate in the noise in the problem.  The analysis presented here gives conditions under which the iterates converge to regions of stationary points of the problem,  determined by the level of noise. This analysis is more complex than for line search methods because the trust region carries (noisy) information from previous iterates. Numerical tests illustrate the practical performance of the algorithm.

\keywords{Trust Region Method \and Nonlinear Optimization \and Constrained Optimization\and Noisy Optimization \and Sequential Quadratic Programming}
\subclass{65K05 \and 68Q25 \and 65G99 \and 90C30}
\end{abstract}
\vspace{1cm}




\newpage
\section{ Problem Statement }
\label{sec:algorithmBO}

Our goal is to propose a variant of the Byrd-Omojokun algorithm \cite{ByrdGilbNoce00}  designed to handle problems where noise affects the function and constraint evaluations. The Byrd-Omojokun (BO) algorithm is a sequential quadratic programming (SQP) method for solving equality constrained optimization problems. It employs trust regions to safeguard the iteration and uses a non-smooth merit function to guide the iterates to stationary points of the problem. The algorithm  is robust even when the Jacobian of the constraints is rank deficient, and can efficiently solve very large problems. 
The BO algorithm has been incorporated or adapted into various methods  for nonlinearly constrained optimization \cite{HeinkRidz}, and is integral to the {\sc {\sc knitro}} software package \cite{ByrdNoceWalt06} . 
 
The problem under consideration is:
\begin{align} \label{problem} 
& \min_x  f(x)    \\
& \text{ s.t.  }  \,  c(x)   = 0 ,\nonumber
\end{align}
where $f:{\cR}^n \rightarrow \cR$ and $c:\cR^n \rightarrow \cR^m$ are smooth functions with gradient and Jacobian denoted, respectively, as
\be  g_k = \grad f(x_k)\in \cR^{n*1}, \quad A_k = \grad c(x_k)\in \cR^{m*n}.\ee

This paper concerns the case where the above quantities cannot be evaluated exactly but we have access to noisy observations denoted as 
\be
\tilde f(x)=  f(x) + \delta_f(x),\quad
\tilde c(x)  = c(x) + \delta_c(x) ;
\ee
\be \tilde g_k = \grad f(x_k) + \delta_g(x),\quad \tilde A_k = \grad c(x_k) + \delta_A(x);\ee
where $\delta_f(x), \delta_c(x),\delta_g(x),\delta_A(x)$ denote noise or computational errors. We  define the Lagrangian as
\be \tilde L(x,\lam)  = \tilde f(x) - \lam^T\tilde  c(x).\ee

Much recent research has focused on developing optimization algorithms for noisy constrained problems of the form \eqref{problem}. While there has been significant interest in this area, trust region methods have received comparatively less attention.
The fact that the trust region includes information from previous iterations makes the analysis in the noisy setting 
 more challenging than for line search methods. Our results are of significant generality in that they also cover the case when the Jacobian of the constraints loses rank.
 This paper builds upon the framework developed in \cite{sun2023trust} for studying trust region methods for unconstrained optimization.

\medskip\noindent\textit{Notation.} Throughout the paper, $\| \cdot\|$ denotes the $\ell_2$-norm. As usual, we abbreviate $f(x_k)$ as $f_k$, etc.

\subsection{Literature Review}
Nonlinear optimization problems with equality constraints arise in a wide range of disciplines, and a variety of line search and trust region methods have been designed to solve them. Among trust region methods, notable approaches include those proposed by Celis-Dennis-Tapia \cite{celis1984trust}, Yuan-Powell \cite{powell1990trust},  Vardi \cite{vardi1985trust}. However, the Byrd-Omojokun algorithm \cite{Omoj89}  stands out, as it strikes the right balance between robustness and scalability  \cite{LaleNocePlan98}. This method plays an important role in modern software for general nonlinearly constrained optimization, as mentioned above.

Recently, there has been increasing interest in adapting trust region methods to solve \textit{unconstrained} problems  with noise in the objective functions and derivatives \cite{sun2023trust,cao2023first,toint2021TRqEDAN,krejic2024non,curtis2022fully,chen2018stochastic}. Adaptations are necessary since classical trust region methods for deterministic optimization can struggle or even fail in this setting \cite{sun2023trust}. 
For example, \cite{sun2023trust} modifies the trust region ratio test by relaxing its numerator and denominator based on noise level (assumed to be bounded), and establishes convergence guarantees. Similar approaches are found in \cite{boumal2023introduction} and \cite{hansknecht2024convergence}, with a heuristic in \cite{conn2000trust}. Additionally, \cite{cao2023first} proposes modifying only the numerator in the trust region ratio test along with other imposed algorithmic conditions, and establishes convergence rates results in high probability.
These methods typically do not require diminishing noise, but the technique proposed in  \cite{toint2021ARqpEDA2} can take advantage of that possibility.

There are few studies on methods for noisy \textit{constrained} optimization   \cite{oztoprak2023constrained,curtis2024stochastic,berahas2023sequential,fang2024fully}. In \cite{oztoprak2023constrained}, a line search SQP algorithm relaxes the descent condition to accommodate noise, ensuring convergence. \cite{berahas2023sequential} presents a step-search SQP algorithm employing a technique different from line searches and trust regions, while \cite{fang2024fully} introduces an approach that has some similarities with the Byrd-Omojkun method, and establish convergence in the stochastic setting.

Most research assumes full-rank Jacobians \cite{fang2024fully,oztoprak2023constrained,berahas2023sequential}, except \cite{berahas2023stochastic}, which also considers non-biased gradient estimates. One of the hallmarks of trust region methods is their ability to deal with rank-deficient Jacobians, see e.g. \cite{conn2000trust,byrd2000trust}, for a discussion of the deterministic setting. {Our work distinguishes itself from previous studies by considering a standard trust region method for equality-constrained optimization, as opposed to modifications that eliminate history by either using a predetermined trust region schedule or defining the trust radius as a multiple of the current gradient norm.}

\section{The Algorithm}
At a current iterate $x_{k}$,  the algorithm utilizes a trust region radius $\Delta_{k}$, Lagrange multipliers $\lambda_{k}$, and an approximation $\tilde W\left(x_{k}, \lambda_{k}\right)$ to the Hessian of the Lagrangian $\tilde L(x_k,\lambda_k)$. With this information, the aim is to generate a  step $p_{k}$ by solving the subproblem
\begin{align}  \label{ideal}
\min_p\quad  &\tg_{k}^{T} p+\frac{1}{2} p^{T} \tilde W\left(x_{k}, \lambda_{k}\right) p\\
 \text{ subject to } \quad &\tilde A_k p + \tilde c_k = 0  \label{linearc} \\
\quad &\|p \|\leq \Delta_k . \label{trustc}
\end{align}
However, this problem may be infeasible: by restricting the size of the step, the trust region may preclude satisfaction of the linear constraints \eqref{linearc}. To address this difficulty, the Byrd-Omojokun method performs the step computation in two stages. First, a normal step determines a desirable level of feasibility which is then imposed upon subproblem \eqref{ideal}-\eqref{trustc}. We now discuss the adaptation of this method to the noisy setting.

\medskip
\textit{Normal Step}: The goal of this step is to find an acceptable level of feasibility in the linear constraints \eqref{linearc}.  To this end, we choose a contraction parameter $\zeta\in (0,1)$ and compute $v_k$ which solves:

\begin{align}
\min_v \quad  & \| \tA_k v  + \tc_k\| \label{vprob} \\
 \text{ subject to } \quad & \|  v \| < \zeta \Delta_k . \label{vprobtr}
\end{align}

\medskip
\textit{Full Step}. With $v_k$ at hand, we can now define the relaxed version of the subproblem (6) as follows

\begin{align}
\label{hprob_exact}
\min_p\quad  &\tg_{k}^{T} p+\frac{1}{2} p^{T}\tW\left(x_{k}, \lambda_{k}\right) p\\
 \text{ subject to } \quad &\tA_k p + \tc_k = \tA_k v_k + \tc_k \nonumber \\
\quad &\|p \|\leq \Delta_k . \nonumber
\end{align}
This problem is always feasible and we denote a 
solution by  $p_k$. 
 In this paper we assume that these two subproblems are solved exactly, but to establish the convergence results presented below, it  suffices to compute approximate solutions that yield a fraction of Cauchy decrease; see e.g. \cite{mybook}.

The BO method is a primal method that uses least squares multiplier estimates. They are defined as a solution to the problem
\be {\label{lm} \min_\lambda \| \tilde g_k - \tilde A_k^T \lam \|^2}. \ee

\textit{Step Acceptance and Trust Region Update.}
To determine if the step is acceptable, the BO algorithm  uses the nonsmooth merit function
\be \label{meritf} \tphi(x,\nu) = \tilde f(x) + \nu \|\tilde c(x) \| ,\ee
where $\nu$ is called the penalty parameter.  We construct a model of $\tphi(\cdot,\nu_k)$ at $x_k$ as
\be
\label{model} 
m_k(p) =\tilde f(x_k) + p^{T} \tilde g_{k}+\tfrac{1}{2} p^{T} \tilde W_{k} p+\nu_k\left\|\tilde A_{k} p+\tilde c_{k}\right\| .
\ee
We define the predicted reduction in the merit function $\tilde \phi(\cdot, \nu)$ to be the change in the model $m_k$ produced by a step $p_k$:
\begin{equation}
    \pred_k(p_k)  = m_k(0)- m_k(p_k). \label{pred} 
\end{equation}
 Before testing step acceptance, we update the penalty parameter $\nu_k$  to ensure that $\pred_k(p_k)$ is sufficienlty positive. Given a scalar { ${\pi_1} \in (0,1)$,} the new penalty parameter $v_k$ is chosen large enough so that (see  \cite[eq(2.35)]{ByrdGilbNoce00})
 \be \pred_k(p_k) > {\pi_1} \nu_k\vpred_k(p_k),\label{merit_rule}\ee
 where 
\begin{equation} \vpred_k(p_k)  = \|\tc_k\| - \|\tA_k p_k+\tc_k\| \label{vpred}
\end{equation}
is  the reduction in the objective of the normal problem. {It is easy to see from the definitions \eqref{pred} and \eqref{vpred} that there always exist large enough $\nu_k$ that satisfy \eqref{merit_rule}.}

Having chosen the penalty parameter $\nu_k$, we test whether the step $p_k$ is acceptable. As in any trust region algorithm, {this test is based on} the ratio between the actual and predicted reduction in the merit function, where the former is defined as 
\begin{equation}  
\ared_k(p_k)  = \tphi(x_k,\nu_k)- \tphi(x_k+p_k,\nu_k). \label{ared}
\end{equation}
Due to the presence of noise, we introduce some slack in this test. We define a relaxed ratio as
\be \label{rhodef} \rho_{k} = \frac{\ared_k+\xi(\eps_f + \nu_k\eps_c) }{\pred_k +\xi(\eps_f + \nu_k\eps_c)},
\ee
where $\xi$ is a constant specified below, {and $\eps_f$ and $ \eps_c$ denote the noise level in the function and constraints, as defined in \eqref{noisebBO}.}
We use the value of $\rho_k$ to determine whether a step is acceptable and whether the trust region radius should be adjusted.

\subsection{Specification of the Algorithm} We are now ready to state the variant of the Byrd-Omojukun algorithm designed to solve the noisy equality constrained optimization problem \eqref{problem}. The only requirement we impose on the Hessian approximation $\tilde W_k$ is that it be a bounded symmetric matrix.

\begin{algorithm2e}[H] \label{algorithmBO} 
\SetAlgoLined
 Initialize {$x_0,\nu_{-1}$}, $\Delta_{0}$. $k=0$; \\
 {Input: $\eps_f, \eps_c$ (noise level)} \\
 Choose constants {${\pi_1}, \pi_0, \zeta$, all in (0,1),} and
 $\tau>1$;\\
 Set relaxation parameter: $\xi = \frac{2}{1-\pi_0}$;\\
 

 \While{a termination condition is not met}{
 Evaluate $\tf_k, \tc_k, \tg_k, \tA_k$;\\ 
 Solve \eqref{lm} for $\lam_k$, compute $ \tW_k $;\\
 Solve  subproblem \eqref{vprob} for $v_{k}$ and \eqref{hprob_exact} for $p_{k}$;\\
 Evaluate $\pred_k$ and $\vpred_k$ by \eqref{pred}, \eqref{vpred};\\
 Set: $\nu_k = \nu_{k-1}$;\\
 \While{$\pred_k \leq {\pi_1} \nu_k \vpred_k$}{
    $\nu_k = \tau \nu_k$;\\
    Re-evaluate $\pred_k$;
}
 Evaluate $\ared_k$ by \eqref{ared} ;\\
 Compute {\be\rho_{k} = \frac{\ared_k+\xi(\eps_f + \nu_k\eps_c) }{\pred_k +\xi(\eps_f + \nu_k\eps_c)}; \label{relaxed}\ee} \\

 \uIf{$\rho_{k}>\pi_0$}{
  {$x_{k+1} = x_k+p_k $}, $\Delta_{k+1} = \tau \Delta_{k}$;
 }\Else{
  {$x_{k+1} = x_k$}, $\Delta_{k+1} =  \Delta_{k}/\tau$;
 }
 Set $k \leftarrow k+1$;
 }
\caption{The Noise Tolerant Byrd-Omojukun Algorithm}
\end{algorithm2e}
We note that line 7 requires the solution of two trust region problems. In practice, this can be done inexactly, as mentioned above, allowing the BO method to scale into the tens of thousands of variables \cite{ByrdNoceWalt06}. The analysis presented here is applicable to both the exact and inexact cases. 

In the next section, we establish global convergence properties of Algorithm~1 to a region of stationary points of the problem. In section~\ref{numerical}, we present numerical experiments illustrating the behavior of the algorithm.

\section{Global Convergence}
We make the following assumptions about the problem, the  noise (or errors), and the iterates.

\medskip\noindent
\textbf{Assumption 1:} $f(x)$, $c(x)$ are $L_f$ and $L_c$--smoothly differentiable, respectively.\\ \\
\textbf{Assumption 2:} The sequences $\{\tA_k\}$, $\{\tW_k\}$, $\{\tc_k\}$ generated by the algorithm are bounded: \textit{i.e.} $\forall k$: 
 \be \|\tA_k\| \leq M_A;\quad \|\tW_k\|\leq M_W;\quad \| \tc_k\|\leq M_c,\label{constants}\ee
  for some constants $M_A$, $M_W$, $M_c$. 
 Furthermore, the sequence $\{\tilde f_k\}$ is bounded below.\\
 
\noindent\textbf{Assumption 3:} There exist constants $\ef$, $\ec$, $\eg$ and $\eA$ such that, for all $x \in \mathbb{R}^n$,
\begin{equation}  \label{noisebBO}
| \df(x)| \leq \ef, \quad \| \dc(x)\| \leq \ec ,\quad \| \dg(x) \| \leq \eg ,\quad \| \dA(x) \| \leq \eA .
\end{equation}
In other words, we assume that noise (or errors) are bounded, which is the case in many practical applications; see e.g. the discussion in \cite{lou2024noise}. {We refer to $\eps_f, \eps_c$ as the \textit{noise level} in the problem.}
.

\subsection{{Reduction in the Feasibility Measure}}
In this section, we show that Algorithm~1 is able to reduce {a stationarity measure of feasibility} to a level consistent with the noise level in the functions.
The first result follows from classical trust region convergence theory; see e.g. \cite{conn2000trust,mybook}.

\begin{lemma}
The step $p_k$ computed by Algorithm~\ref{algorithmBO} satisfies
\be \vpred_k(p_k) = \|\tc_k\| - \|\tA_k p_k+\tc_k\|\geq \frac{\|\tA_k^T \tc_k\|}{2\|\tc_k\|}\min\left(\zeta\Delta_k,\frac{\|\tA_k^T \tc_k\|}{\|\tA_k^T\tA_k\|}\right).\label{lem1_res}\ee
\end{lemma}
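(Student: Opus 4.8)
The plan is to reduce the statement to the classical Cauchy‑decrease estimate for the \emph{normal} subproblem. First I would note that the linearized constraint in \eqref{hprob_exact} forces $\tA_k p_k + \tc_k = \tA_k v_k + \tc_k$, so that by the definition \eqref{vpred},
\[
\vpred_k(p_k) = \|\tc_k\| - \|\tA_k p_k + \tc_k\| = \|\tc_k\| - \|\tA_k v_k + \tc_k\| .
\]
Thus it suffices to bound below the feasibility reduction produced by the normal step $v_k$ from \eqref{vprob}--\eqref{vprobtr}, and the full step $p_k$ and the objective $\tg_k,\tW_k$ play no role.

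Second, I would rewrite the normal subproblem in squared form: $v_k$ also minimizes $\psi(v) := \tfrac12\|\tA_k v + \tc_k\|^2$ over $\|v\|\le \zeta\Delta_k$, and $\psi$ is an exact quadratic with $\nabla\psi(0) = \tA_k^{T}\tc_k$ and constant Hessian $\nabla^2\psi = \tA_k^{T}\tA_k \succeq 0$. Since $v=0$ is feasible and $v_k$ is a global minimizer over the ball, $v_k$ achieves at least the decrease of the Cauchy point $v_k^{C}$ obtained by minimizing $\psi$ along the ray $\{-t\,\tA_k^{T}\tc_k : t\ge 0\}$ intersected with the feasible ball. The standard one–dimensional analysis of that ray — splitting into the case where the unconstrained minimizer along the ray lies inside the ball and the case where it does not, and using $\nabla\psi(0)^{T}\nabla^2\psi\,\nabla\psi(0)\le \|\tA_k^{T}\tA_k\|\,\|\tA_k^{T}\tc_k\|^2$ — yields
\[
\psi(0) - \psi(v_k)\;\ge\;\psi(0) - \psi(v_k^{C})\;\ge\;\tfrac12\,\|\tA_k^{T}\tc_k\|\,\min\!\left(\zeta\Delta_k,\;\frac{\|\tA_k^{T}\tc_k\|}{\|\tA_k^{T}\tA_k\|}\right).
\]

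Third, I would convert this decrease in $\psi$ into a decrease in the norm. Writing $a=\|\tc_k\|$ and $b=\|\tA_k v_k+\tc_k\|$ and using $b\le a$ (again because $v=0$ is feasible),
\[
\psi(0)-\psi(v_k)=\tfrac12(a-b)(a+b)\;\le\;a\,(a-b)\;=\;\|\tc_k\|\,\vpred_k(p_k);
\]
combining with the previous display and dividing by $\|\tc_k\|$ gives exactly \eqref{lem1_res}.

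There is no deep obstacle here — this is the usual trust‑region argument of \cite{conn2000trust,mybook} specialized to a least‑squares model — so the only care needed is the bookkeeping in the one–dimensional Cauchy analysis and the degenerate cases: when $\tA_k^{T}\tc_k = 0$ both sides are handled trivially since $\vpred_k(p_k)\ge 0$, and when $\tc_k=0$ one reads the bound with the convention $0/0=0$ (then $v_k=0$ and both sides vanish). If the strict inequality in \eqref{vprobtr} were to prevent the minimum from being attained, one simply replaces $\psi(v_k)$ by the infimum of $\psi$ over the closed ball, which changes none of the estimates.
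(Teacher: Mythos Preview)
Your proposal is correct and follows exactly the approach the paper invokes: the paper does not give a proof but simply cites classical trust-region Cauchy-decrease theory \cite{conn2000trust,mybook}, and what you have written is precisely that standard argument spelled out in detail (reduction to the normal step via the constraint in \eqref{hprob_exact}, Cauchy decrease for the squared residual, and the $a^2-b^2\le 2a(a-b)$ conversion to the unsquared norm).
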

%
\noindent The next lemma shows that $m_k$ is an accurate model of the merit function when $\Delta_k$ is small. 

\begin{lemma}[Accuracy of the Model of the Merit Function]
Under Assumptions~1-3, \be \left| \ared_k(p_k) -\pred_k(p_k)\right|\leq M_L(\nu_k) \Delta_k^2 + (\eg + \nu_k \eA)\Delta_k + 2(\ef + \nu_k \ec),\label{lem2_res}\ee
where
\be M_L(\nu_k) =\max(L_f + M_W,\ \nu_k L_c). \label{def_ML}\ee
\end{lemma}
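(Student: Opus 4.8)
The plan is to expand $\ared_k(p_k) - \pred_k(p_k)$ by inserting the definitions \eqref{ared}, \eqref{pred}, \eqref{model}, and \eqref{meritf}, then split the difference into three conceptually distinct sources of error: (i) the Taylor remainder between the true smooth functions $f,c$ and their quadratic/linear models at $x_k$, (ii) the error contributed by the gradient and Jacobian noise $\delta_g, \delta_A$, which enter linearly in $p_k$, and (iii) the error contributed by the function- and constraint-value noise $\delta_f, \delta_c$, which enter as pointwise constants. First I would write $\ared_k(p_k) = [\tilde f(x_k) - \tilde f(x_k+p_k)] + \nu_k[\|\tilde c(x_k)\| - \|\tilde c(x_k+p_k)\|]$ and $\pred_k(p_k) = -p_k^T\tilde g_k - \tfrac12 p_k^T \tilde W_k p_k + \nu_k[\|\tilde c_k\| - \|\tilde A_k p_k + \tilde c_k\|]$, and then group the $f$-related terms and the $c$-related terms separately, since the triangle inequality reduces the whole bound to estimating $|\tilde f(x_k+p_k) - \tilde f(x_k) - p_k^T\tilde g_k - \tfrac12 p_k^T\tilde W_k p_k|$ and $\nu_k\,\big|\,\|\tilde c(x_k+p_k)\| - \|\tilde A_k p_k + \tilde c_k\|\,\big|$.

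For the objective term, I would pass from $\tilde f$ to $f$ and from $\tilde g_k$ to $g_k$ using Assumption 3, picking up $2\ef$ (two function-value noise terms) and $\eg\Delta_k$ (since $\|p_k\|\le\Delta_k$); what remains is $|f(x_k+p_k) - f(x_k) - p_k^T g_k|$, which by $L_f$-smoothness (Assumption 1) is at most $\tfrac12 L_f\|p_k\|^2 \le \tfrac12 L_f\Delta_k^2$, and then the Hessian model term $\tfrac12|p_k^T\tilde W_k p_k| \le \tfrac12 M_W\Delta_k^2$ by Assumption 2. For the constraint term, the key device is the reverse triangle inequality $\big|\,\|u\| - \|w\|\,\big| \le \|u - w\|$: with $u = \tilde c(x_k+p_k)$ and $w = \tilde A_k p_k + \tilde c_k$, the difference is controlled by $\|\tilde c(x_k+p_k) - \tilde c_k - \tilde A_k p_k\|$. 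Here I again strip the noise: replacing $\tilde c(x_k+p_k)$ and $\tilde c_k$ by $c(x_k+p_k)$ and $c_k$ costs $2\ec$, and replacing $\tilde A_k$ by $A_k$ costs $\eA\Delta_k$; the leftover $\|c(x_k+p_k) - c_k - A_k p_k\|$ is bounded by $\tfrac12 L_c\|p_k\|^2 \le \tfrac12 L_c\Delta_k^2$ via $L_c$-smoothness. Multiplying the constraint estimates by $\nu_k$ and summing everything gives a $\Delta_k^2$ coefficient of $\tfrac12(L_f + M_W) + \tfrac12\nu_k L_c \le \max(L_f + M_W,\ \nu_k L_c) = M_L(\nu_k)$, a $\Delta_k$ coefficient of $\eg + \nu_k\eA$, and a constant term $2\ef + 2\nu_k\ec = 2(\ef + \nu_k\ec)$, matching \eqref{lem2_res}.

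This argument is essentially a bookkeeping exercise, so there is no deep obstacle; the only points requiring care are (a) making sure every noise quantity is counted with the correct multiplicity — in particular that the constant-noise terms appear exactly twice each (once at $x_k$, once at $x_k+p_k$) rather than once, and (b) bundling the two quadratic contributions $\tfrac12 L_f\Delta_k^2$, $\tfrac12 M_W\Delta_k^2$ from the objective together with $\tfrac12\nu_k L_c\Delta_k^2$ from the constraints into a single $\max$ rather than a sum, which is where the slightly nonobvious definition \eqref{def_ML} comes from. One should also note that no full-rank assumption on $\tilde A_k$ is used anywhere here, consistent with the paper's emphasis; Assumption 2 is invoked only for the boundedness of $\tilde W_k$, and $M_c$ is not needed for this particular lemma.
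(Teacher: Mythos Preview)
Your proposal is correct and follows essentially the same approach as the paper: both expand $\ared_k-\pred_k$, strip the noise in $f,c,g,A$ via Assumption~3 (yielding exactly the $2(\ef+\nu_k\ec)$ and $(\eg+\nu_k\eA)\Delta_k$ terms), apply the reverse triangle inequality to the constraint-norm difference, and bound the remaining Taylor residues via the $L_f$- and $L_c$-smoothness together with $\|\tW_k\|\le M_W$ and $\|p_k\|\le\Delta_k$. The only cosmetic difference is that you group terms by ``objective part / constraint part'' whereas the paper first splits off all noise contributions at once and then handles the noiseless remainder via an integral Taylor expansion; the resulting inequalities and the final bundling $\tfrac12(L_f+M_W)+\tfrac12\nu_kL_c\le M_L(\nu_k)$ are identical.
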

\begin{proof}
From \cref{model,pred,vpred} we have: 
\be \label{pred_vs_vpred}
\pred_k(p_k) =  -p_k^{T} \tg_{k}-\frac{1}{2} p_k^{T} \tW_{k} p_k + \nu_k  \vpred_k(p_k).\ee
Using this fact, and recalling Assumptions 1-3, we have
\be
\begin{split} \nonumber
&\left| \ared_k(p_k) -\pred_k(p_k)\right|\\
 =&\left| [\tphi(x_k) - \tphi(x_{k+1}) ] -[m_k(0)-m_k(p_k)]\right|\\
 = & \left| \tf_k-\tf_{k+1} + \nu_k [\|\tc_k\|-\|\tc_{k+1}\|] - \left[-p_k^{T} \tg_{k}-\frac{1}{2} p_k^{T} \tW_{k} p_k + \nu_k  \vpred_k(p_k)\right ]   \right|\\
 \leq & \left| f_k-f_{k+1} + \nu_k [\|c_k\|-\|c_{k+1}\|] - \left[-p_k^{T} g_{k}-\frac{1}{2} p_k^{T} \tW_{k} p_k + \nu_k  \vpred_k(p_k)\right ]   \right| + ...\\
 & ... + \left| \df(x_k) + \df(x_{k+1}) + p_k^T \dg(x_k) +  \nu_k[\|\dc(x_k)\| + \|\dc(x_{k+1})\|] \right| \\
  \leq & \left| \int_{0}^{1}\left[g\left(x_{k}+t p_{k}\right)-g_k\right]^{T} p_{k} d t + \nu_k [ \|A_k^Tp_k+c_k\|-\|c_{k+1}\|] +\frac{1}{2} p_k^{T} \tW_{k} p_k  \right| + ...\\
  &... + 2(\ef + \nu_k \ec) + \eg\|p_k\| + \nu_k \|\dA(x_k)^T p_k\|  \\
  \leq & \frac12{(L_f+M_W+\nu_k L_c)} \|p_k\|^2  + \eg\|p_k\| + \nu_k \|\dA(x_k)^T p_k\| +  2(\ef + \nu_k \ec) \\
  \leq& \frac12{(L_f+M_W+\nu_k L_c)} \Delta_k^2 + (\eg + \nu_k \eA)\Delta_k + 2(\ef + \nu_k \ec)\\
  \leq& \max(L_f + M_W,\ \nu_k L_c) \Delta_k^2 + (\eg + \nu_k \eA)\Delta_k + 2(\ef + \nu_k \ec)\\
  =& M_L(\nu_k) \Delta_k^2 + (\eg + \nu_k \eA)\Delta_k + 2(\ef + \nu_k \ec).
 \end{split}\ee
\end{proof}


For economy of notations we define, for any given iterate $k$,
\be \label{defi1} \mE_v(k):= \frac{\xi M_c}{{\pi_1} \zeta}  (\eg / \nu_{k} + \eA);
\quad e_{k}:=\eps_f/\nu_{k} +  \eps_c.
\ee

For the following lemma recall that the constants $\zeta$ and $ \xi$ are defined in lines 2-3 of Algorithm~1.

\begin{lemma}[Increase of the Trust Region] 
\label{tr-lbBO}
Let Assumptions 1 through 3 be satisfied. Suppose that for an iterate $k$ and a given positive constant $\gamma$,
\be {\|\tilde A_k^T \tilde c_k\|} >\mE_v(k) +  \gamma.\label{lem3asp}\ee
Define
\be\label{def_bar_del} \bar\Delta(\gamma) = \left[\frac{ {\pi_1}   \zeta}{\xi  \max(1,M_c) M}\right]\gamma,\ee 
where
\be M = \max\left[\frac{L_f + M_W}{\nu_0},\ L_c, M_A^2 \right].\ee
Then,
 \be  \min\left(\bar\Delta(\gamma),\frac{\|\tA_k^T \tc_k\|}{\|\tA_k^T\tA_k\|}\right)  = \bar\Delta(\gamma). \label{jegg} \ee
Furthermore, if $\Delta_k \leq \bar\Delta(\gamma)$, the step is accepted and
\be \Delta_{k+1} =\tau \Delta_k. \label{dincrease} \ee



\end{lemma}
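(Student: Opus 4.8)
The plan is to prove the two assertions in turn, with the second building on the first.

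\emph{Proof of \eqref{jegg}.} First I would bound $\|\tA_k^T\tA_k\|\le\|\tA_k\|^2\le M_A^2\le M$. Since $\mE_v(k)>0$, hypothesis \eqref{lem3asp} forces $\|\tA_k^T\tc_k\|>\gamma>0$; in particular $\tA_k\ne0$, so the quotient in \eqref{jegg} is well defined, and $M_c\ge\|\tc_k\|>0$. Hence $\|\tA_k^T\tc_k\|/\|\tA_k^T\tA_k\|>\gamma/M$. On the other hand the prefactor in \eqref{def_bar_del} satisfies $\pi_1\zeta/(\xi\max(1,M_c))<1$, because $\pi_1,\zeta\in(0,1)$, $\xi=2/(1-\pi_0)>1$ and $\max(1,M_c)\ge1$; therefore $\bar\Delta(\gamma)<\gamma/M<\|\tA_k^T\tc_k\|/\|\tA_k^T\tA_k\|$, which is \eqref{jegg}.

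\emph{Proof of step acceptance.} Assume $\Delta_k\le\bar\Delta(\gamma)$. By \eqref{jegg}, $\zeta\Delta_k\le\Delta_k\le\bar\Delta(\gamma)\le\|\tA_k^T\tc_k\|/\|\tA_k^T\tA_k\|$, so the minimum in Lemma~1 is attained at $\zeta\Delta_k$, and with $\|\tc_k\|\le M_c$ the bound \eqref{lem1_res} gives $\vpred_k(p_k)\ge\bigl(\zeta\|\tA_k^T\tc_k\|/(2M_c)\bigr)\Delta_k$. The penalty loop of Algorithm~\ref{algorithmBO} exits only once \eqref{merit_rule} holds, so $\pred_k(p_k)>\pi_1\nu_k\vpred_k(p_k)\ge\bigl(\pi_1\zeta\nu_k\|\tA_k^T\tc_k\|/(2M_c)\bigr)\Delta_k>0$; in particular the denominator of $\rho_k$ in \eqref{relaxed} is positive. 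Next I would unwind $\rho_k$: since $\xi=2/(1-\pi_0)$ one has $(1-\pi_0)\xi=2$, so the slack $\xi(\eps_f+\nu_k\eps_c)$ cancels the $2(\ef+\nu_k\ec)$ term of \eqref{lem2_res} exactly (recall $\ef=\eps_f$, $\ec=\eps_c$), and after replacing $\ared_k(p_k)$ by the lower bound $\pred_k(p_k)-M_L(\nu_k)\Delta_k^2-(\eg+\nu_k\eA)\Delta_k-2(\ef+\nu_k\ec)$ from Lemma~2, the inequality $\rho_k>\pi_0$ follows once $(1-\pi_0)\pred_k(p_k)>M_L(\nu_k)\Delta_k^2+(\eg+\nu_k\eA)\Delta_k$.

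To finish I would substitute the lower bound on $\pred_k(p_k)$, divide by $\nu_k\Delta_k>0$, and use $(1-\pi_0)/2=1/\xi$, reducing the goal to $(\pi_1\zeta/(\xi M_c))\|\tA_k^T\tc_k\|\ge(M_L(\nu_k)/\nu_k)\Delta_k+(\eg/\nu_k+\eA)$. The definition $\mE_v(k)=(\xi M_c/(\pi_1\zeta))(\eg/\nu_k+\eA)$ together with \eqref{lem3asp} makes the left-hand side exceed $(\eg/\nu_k+\eA)+(\pi_1\zeta/(\xi M_c))\gamma$, so it suffices to have $(\pi_1\zeta/(\xi M_c))\gamma\ge(M_L(\nu_k)/\nu_k)\Delta_k$. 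Because $\{\nu_k\}$ is non-decreasing we have $\nu_k\ge\nu_0$, hence by \eqref{def_ML} $M_L(\nu_k)/\nu_k=\max((L_f+M_W)/\nu_k,\,L_c)\le\max((L_f+M_W)/\nu_0,\,L_c)\le M$; and $\max(1,M_c)\ge M_c$ gives $\bar\Delta(\gamma)\le(\pi_1\zeta/(\xi M_c M))\gamma$. Thus $\Delta_k\le\bar\Delta(\gamma)$ implies $(M_L(\nu_k)/\nu_k)\Delta_k\le M\Delta_k\le M\bar\Delta(\gamma)\le(\pi_1\zeta/(\xi M_c))\gamma$, which closes the chain. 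Consequently $\rho_k>\pi_0$, Algorithm~\ref{algorithmBO} executes the ``if'' branch, accepts the step, and sets $\Delta_{k+1}=\tau\Delta_k$, which is \eqref{dincrease}.

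I do not expect any single step to be a real obstacle; the proof is a chain of elementary estimates. The part that needs care is the bookkeeping of constants, which are calibrated so that three things happen at once: the function/constraint noise $\eps_f,\eps_c$ cancels between numerator and denominator of $\rho_k$ (the role of $\xi=2/(1-\pi_0)$), the gradient/Jacobian noise $\eg,\eA$ is absorbed into the additive term $\mE_v(k)$ of the hypothesis \eqref{lem3asp}, and the residual deterministic model errors, of order $\Delta_k^2$ and $\Delta_k$, are dominated by the Cauchy-type lower bound on $\pred_k$ once $\Delta_k\le\bar\Delta(\gamma)$. One must also remember to invoke the monotonicity of $\{\nu_k\}$ (to replace $M_L(\nu_k)/\nu_k$ by $M$) and the bounds $0<\|\tc_k\|\le M_c$ (used in lower-bounding $\vpred_k$).
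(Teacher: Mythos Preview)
Your argument is correct and follows essentially the same route as the paper: both parts rely on the Cauchy decrease bound \eqref{lem1_res}, the model accuracy estimate \eqref{lem2_res}, the merit condition \eqref{merit_rule}, and the monotonicity of $\{\nu_k\}$ to control $M_L(\nu_k)/\nu_k$. The only cosmetic difference is that the paper establishes acceptance by bounding $|\rho_k-1|\le 1-\pi_0$ via a single displayed ratio, whereas you rearrange $\rho_k>\pi_0$ into the equivalent sufficient inequality $(1-\pi_0)\pred_k>M_L(\nu_k)\Delta_k^2+(\eg+\nu_k\eA)\Delta_k$ and verify it directly; the constants are calibrated identically in both versions.
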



\begin{proof}
\textit{Part 1.} By \eqref{def_ML}, and since  $\nu_k$ is non-decreasing we obtain:
\be \frac{M_L(\nu_k)}{\nu_k} \leq \max\left[\frac{L_f + M_W}{\nu_0},\ L_c \right] \leq \max\left[\frac{L_f + M_W}{\nu_0},\ L_c, M_A^2 \right]= M.\label{ax1_tr}\ee
By condition \eqref{lem3asp} and the bound of $\|\tc_k\|$ in \cref{constants}, 
\be \frac{\|\tilde A_k^T \tilde c_k\|}{\|\tilde c_k\|} >\frac{\xi }{{\pi_1} \zeta} \left(\frac{\eg}{\nu_k} +  \eA\right)+  \frac{\gamma}{M_c}. \label{lem3_aux2}\ee
Now, by the definitions  of $\bar\Delta(\gamma)$  and $\xi$, 
\be\label{min_delta_bar}
\begin{split}
 \bar\Delta(\gamma)
 &\leq  \frac{ {\pi_1}   \zeta  (1-\pi_0)}{2  \max(1,M_c) M}\gamma\\
 &<  \frac{1}{  \max(1,M_c) M}\gamma\\
 & \leq \frac{\gamma}{M_A^2}\\
 &< \frac{\|\tA_k^T \tc_k\|}{\|\tA_k^T\tA_k\|},
 \end{split}
 \ee
 where the second inequality follows  by noting that $\frac12 {\pi_1}\zeta(1-\pi_0)<1$; the third inequality follows from $\max(1,M_c)\geq 1$ and  $M \geq M_A^2$, by definition; and the last inequality follows from \eqref{lem3asp} and the definition of $M_A$.
 Therefore we have
 \be  \min\left(\bar\Delta(\gamma),\frac{\|\tA_k^T \tc_k\|}{\|\tA_k^T\tA_k\|}\right)  = \bar\Delta(\gamma).  \ee
 
 \textit{Part 2.} Now, since  $\Delta_k \leq \bar\Delta(\gamma)$ and {by $\zeta < 1$,} we have
  \be \min\left(\zeta\Delta_k,\frac{\|\tA_k^T \tc_k\|}{\|\tA_k^T\tA_k\|}\right)  = \zeta \Delta_k. \label{smaller_delta}\ee
We also have that
\be
\begin{split} 
M \bar \Delta(\gamma) + (\eg/\nu_k + \eA) &=  \frac{ {\pi_1}   \zeta}{\xi  \max(1,M_c)}\gamma +  \eg/\nu_k + \eA\\
&\leq \frac{{\pi_1}   \zeta }{\xi M_c}\gamma +   \eg/\nu_k + \eA\\
& = \frac{{\pi_1}  \zeta}{\xi}\left[\frac{\gamma}{M_c} + \xi\frac{1}{{\pi_1} \zeta} (\eg/\nu_k + \eA)\right].
\end{split}
\label{lem3_aux}
\ee
Using this bound, the definition of $\rho_k$ along with \cref{ax1_tr,smaller_delta}, we obtain
\be
\begin{split}
|\rho_k-1| &= \frac{|\ared_k(p_k)- \pred_k(p_k)|}{|\pred_k(p_k)+\xi ( \ef + \nu_k \ec)|}\\ 
&\stackrel{\leq}{\mtiny{\labelcref{merit_rule}}}  
	\frac{|\ared_k(p_k)-\pred_k(p_k)|}{{\pi_1} \nu_k|\vpred_k(p_k)|+\xi ( \ef + \nu_k \ec)} \\
&\stackrel{\leq}{ \mtiny{ \labelcref{lem1_res},\labelcref{lem2_res}}}  
	\frac{M_L(\nu_k) \Delta_k^2 + (\eg + \nu_k \eA) \Delta_k + 2(\ef + \nu_k \ec)}
{ {\pi_1} \nu_k   \frac{\|\tA_k^T \tc_k\|}{2\|\tc_k\|}\min\left(\zeta\Delta_k,\frac{\|\tA_k^T \tc_k\|}{\|\tA_k^T\tA_k\|}\right)+\xi ( \ef + \nu_k \ec)}\\
&=
	\frac{[{(M_L(\nu_k)/\nu_k)} \Delta_k + (\eg + \nu_k \eA)/\nu_k] \Delta_k + 2(\ef + \nu_k \ec)/\nu_k}
{ {\pi_1}    \frac{\|\tA_k^T \tc_k\|}{2\|\tc_k\|}\min\left(\zeta\Delta_k,\frac{\|\tA_k^T \tc_k\|}{\|\tA_k^T\tA_k\|}\right)+\xi ( \ef + \nu_k \ec)/\nu_k}\\
&\stackrel{\leq}{\mtiny{ \labelcref{ax1_tr}, \labelcref{lem3_aux2}}} 
	\frac{[M \Delta_k + (\eg/\nu_k + \eA)] \Delta_k + 2(\ef/\nu_k +  \ec)}{{\pi_1}    \zeta \{\frac{\xi }{{\pi_1} \zeta} (\eg /\nu_k +  \eA) +  \frac{\gamma}{M_c}\}\Delta_k/2+\xi ( \ef/\nu_k +  \ec)}\\
&\stackrel{\leq}{\mtiny{\Delta_k \leq\bar\Delta}}
	\frac{[M \bar\Delta + (\eg/\nu_k + \eA)] \Delta_k + 2(\ef/\nu_k +  \ec)}{{\pi_1}    \zeta \{\frac{\xi }{{\pi_1} \zeta} (\eg /\nu_k +  \eA) +  \frac{\gamma}{M_c}\}\Delta_k/2+\xi ( \ef/\nu_k +  \ec)}\\
&\stackrel{\leq}{\mtiny{\labelcref{lem3_aux}}}
	\frac{\frac{{\pi_1}  \zeta}{\xi}\left[\frac{\gamma}{M_c} + \frac{\xi}{{\pi_1} \zeta} (\eg/\nu_k + \eA)\right] \Delta_k + 2(\ef/\nu_k +  \ec)}{{\pi_1}    \zeta \{\frac{\xi }{{\pi_1} \zeta} (\eg/ \nu_k + \eA) +  \frac{\gamma}{M_c}\}\Delta_k/2+\xi ( \ef/\nu_k +  \ec)}\\
&=
	\frac
{\frac{1}{\xi}\left[\frac{{\pi_1}  \zeta\gamma}{M_c} + \xi (\eg/\nu_k + \eA)\right] \Delta_k + 2(\ef/\nu_k +  \ec)}
{\frac12 \left[\xi(\eg/ \nu_k+ \eA) +  \frac{{\pi_1}    \zeta \gamma }{M_c}\right]\Delta_k+\xi ( \ef/\nu_k +  \ec)}\\
&=\frac{2}{\xi}\\
&=1-\pi_0.
\end{split}\ee
By line 17 of {Algorithm~1 we conclude that \eqref{dincrease} holds.}
\end{proof} 



\begin{corollary}[Lower Bound of Trust Region Radius]\label{cor1}
Let Assumptions~1 through 3 be satisfied. Given $\gamma>0$, if there exist $ K>0$ such that for all $k\geq K$
\be
{\|\tilde A_k^T \tilde c_k\|} > \mE_v(k) +  \gamma,\label{tr-ctaspBO}
\ee
then there exist $\hat K \geq K $ such that for all $k\geq \hat K$,
\be \Delta_k > \tfrac{1}{\tau}\bar\Delta (\gamma).\label{tr-bardelBO} \ee
\end{corollary}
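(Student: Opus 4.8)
The plan is to deduce \eqref{tr-bardelBO} from \textit{Lemma}~\ref{tr-lbBO} by a standard "the radius cannot be kept small" argument. The engine is a one-step invariance property: for every $k \ge K$, if $\Delta_k > \tfrac{1}{\tau}\bar\Delta(\gamma)$ then also $\Delta_{k+1} > \tfrac{1}{\tau}\bar\Delta(\gamma)$. I would prove this by splitting into two cases. If $\Delta_k \le \bar\Delta(\gamma)$, then \eqref{tr-ctaspBO} is exactly hypothesis \eqref{lem3asp} of \textit{Lemma}~\ref{tr-lbBO}, so that lemma guarantees the step is accepted and $\Delta_{k+1} = \tau\Delta_k > \bar\Delta(\gamma) \ge \tfrac{1}{\tau}\bar\Delta(\gamma)$. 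If instead $\Delta_k > \bar\Delta(\gamma)$, then irrespective of whether the step is accepted, the trust-region update in Algorithm~\ref{algorithmBO} replaces $\Delta_k$ by either $\tau\Delta_k$ or $\Delta_k/\tau$, and in the less favorable case $\Delta_{k+1} = \Delta_k/\tau > \bar\Delta(\gamma)/\tau = \tfrac{1}{\tau}\bar\Delta(\gamma)$. In either case the inequality is preserved, so a trivial induction gives $\Delta_j > \tfrac{1}{\tau}\bar\Delta(\gamma)$ for all $j \ge k$ once it holds at some $k \ge K$.

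It then remains to exhibit a single index $\hat K \ge K$ with $\Delta_{\hat K} > \tfrac{1}{\tau}\bar\Delta(\gamma)$, since the invariance above propagates this bound to all later iterates, which is precisely the assertion \eqref{tr-bardelBO}. If $\Delta_K$ already exceeds $\tfrac{1}{\tau}\bar\Delta(\gamma)$, take $\hat K = K$. Otherwise $\Delta_K \le \tfrac{1}{\tau}\bar\Delta(\gamma) \le \bar\Delta(\gamma)$, so \textit{Lemma}~\ref{tr-lbBO} forces $\Delta_{K+1} = \tau\Delta_K$; repeating this reasoning, as long as $\Delta_{K+j} \le \tfrac{1}{\tau}\bar\Delta(\gamma)$ one has $\Delta_{K+j} = \tau^{\,j}\Delta_K$. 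Since $\tau > 1$ and $\Delta_K > 0$, the quantity $\tau^{\,j}\Delta_K$ diverges and hence eventually exceeds $\tfrac{1}{\tau}\bar\Delta(\gamma)$; letting $\hat K$ be the first index $K+j$ at which this occurs finishes the proof.

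I do not anticipate any real obstacle: this is the familiar mechanism by which trust-region radii stay bounded away from zero on regions where the stationarity measure is bounded away from zero. The only point that warrants a moment's care is the sub-case $\Delta_k > \bar\Delta(\gamma)$, which falls outside the scope of \textit{Lemma}~\ref{tr-lbBO} because there the step may be rejected; here one simply observes that even a rejected step contracts $\Delta_k$ by no more than the factor $\tau$ and therefore cannot bring it below $\tfrac{1}{\tau}\bar\Delta(\gamma)$, which is what keeps the invariance intact.
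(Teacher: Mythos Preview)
Your proposal is correct and follows essentially the same approach as the paper's proof: both rely on \cref{tr-lbBO} to show that whenever $\Delta_k \le \bar\Delta(\gamma)$ the radius is increased, and then observe that once the radius has been driven above the threshold it can be cut by at most a factor $\tau$ per iteration. Your version is simply more explicit, packaging the argument as a one-step invariance plus an existence step, whereas the paper states the same mechanism in two sentences.
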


\begin{proof}
We apply \cref{tr-lbBO} for each iterate after $K$ to deduce that, whenever $\Delta_k \leq \, \bar \Delta(\gamma)$, the trust region radius will be increased. Thus, there is an index $\hat K$ for which $\Delta_k$ becomes greater than $\bar\Delta(\gamma)$. On subsequent iterations, the trust region radius can never be reduced below $\bar\Delta(\gamma)/\nu$ (by Step~6 of Algorithm~\ref{algorithmBO}) establishing \eqref{tr-bardelBO}. 
\end{proof}


Before presenting the next lemma, we define several constants that will be useful in the rest of this section. First, we define
\be \chi:= \frac{\pi_0 {\pi_1}^2   \zeta^2}{2\tau\xi M_c \max(1,M_c) M}.\ee
Next, for any given iterate $k'$, recall as first defined in \eqref{defi1}, 
\be \label{defi1'} \mE_v(k'):= \frac{\xi M_c}{{\pi_1} \zeta}  (\eg / \nu_{k'} + \eA);
\quad e_{k'}:=\eps_f/\nu_{k'} +  \eps_c
\ee
Additionally, \textit{for any given} $\mu>0$, define
\be \label{defi2}
\gamma_{k'} :=  \frac12 \left(-\mE_v(k') + \sqrt{\mE_v(k')^2 + 8 e_{k'} / \chi} \right)+ \mu;
\quad\bar\Delta_{k'} = \frac{ {\pi_1}   \zeta}{\xi  \max(1,M_c) M}\gamma_{k'} .
\ee
Thus, here and henceforth we write $$\bar\Delta_{k'}:= \bar \Delta (\gamma_{k'}).$$
Note that the four quantities defined in \eqref{defi1'}-\eqref{defi2} only depend on $k'$ through the value of the penalty parameter $\nu_{k'}$.

\medskip\noindent
\textit{Remark 1. The Anchor Iterate $k'$.} We emphasize that $k'$ denotes an arbitrary positive integer. All subsequent results  will be presented with respect to this fixed number (and thus on its corresponding merit parameter $\nu_{k'}$). {We call $k'$ the \textit{anchor iterate},} and revisit its role later on after introducing the first two critical regions in propositions~1 and 2.  


For convenience, we also introduce a re-scaled version of the merit function, 
\be \label{scaled} \tilde \Phi(x,\nu):= \tfrac{1}{\nu} \tf(x) + \|\tc(x)\|, \ee
as well as its noiseless counterpart,
\be \label{nonscale} \Phi(x,\nu):= \tfrac{1}{\nu} f(x) + \|c(x)\|. \ee

With these definitions at hand, we are ready to state our next lemma.
\begin{lemma}[Merit Function Reduction]\label{noisyfuncredBO}
Let Assumptions 1 through 3 be satisfied. Let $k'$ be any non-negative integer and let $\mu>0$ {in \eqref{defi2}} be any fixed constant. Suppose for some iterate $k>k'$,
\be \label{suppBO}
    {\|\tilde A_k^T \tilde c_k\|} > \mE_v(k') + \gamma_{k'}
   \quad\mbox{and}\quad
   \Delta_k \, {\geq} \, \frac{\bar\Delta_{k'}}{\tau}.
 \ee 
Then
\be 
\vpred_k(p_k) 
\geq \frac{\chi}{\pi_0 {\pi_1}}\left(\mE_v(k')+  \gamma_{k'}\right)\gamma_{k'}.
\ee
Furthermore, if the step is accepted at iteration $k$ {by Algorithm~\ref{algorithmBO}}, we have
\be
\tilde \Phi({x_{k}} , \nu_k) - \tilde \Phi({x_{k+1}} , \nu_k) > \chi \mu^2 +\mu \sqrt{\chi^2\mE_v(k')^2 + 8\chi e_{k'}} \label{meritdec}.
\ee

\end{lemma}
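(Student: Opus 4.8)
The plan is to establish the two conclusions in sequence, using Lemma~1 for the feasibility bound and Lemma~\ref{tr-lbBO} (plus Lemma~2) for the merit-function decrease. First I would observe that the hypotheses \eqref{suppBO} are exactly the hypotheses of Lemma~\ref{tr-lbBO} with $\gamma = \gamma_{k'}$ and with the anchor index $k'$ playing the role that $k$ played there (recall that $\mE_v(k')$, $\gamma_{k'}$, $\bar\Delta_{k'}$ all depend on $k$ only through the penalty parameter, which is non-decreasing, so $\mE_v(k) \leq \mE_v(k')$ whenever $k>k'$ — this monotonicity is the bridge between the ``$k$'' and ``$k'$'' versions of the assumption). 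Hence the conclusion \eqref{jegg} of Lemma~\ref{tr-lbBO} applies: $\min(\bar\Delta_{k'}, \|\tA_k^T\tc_k\|/\|\tA_k^T\tA_k\|) = \bar\Delta_{k'}$. Combining this with $\zeta<1$ and $\Delta_k \geq \bar\Delta_{k'}/\tau$, the minimum appearing in Lemma~1's bound \eqref{lem1_res} becomes $\min(\zeta\Delta_k, \|\tA_k^T\tc_k\|/\|\tA_k^T\tA_k\|) \geq \zeta\bar\Delta_{k'}/\tau$. Substituting into \eqref{lem1_res} and using $\|\tc_k\|\leq M_c$ gives
\[
\vpred_k(p_k) \geq \frac{\|\tA_k^T\tc_k\|}{2M_c}\cdot\frac{\zeta\bar\Delta_{k'}}{\tau}
> \frac{(\mE_v(k')+\gamma_{k'})}{2M_c}\cdot\frac{\zeta}{\tau}\cdot\frac{{\pi_1}\zeta}{\xi\max(1,M_c)M}\gamma_{k'},
\]
and unwinding the definition of $\chi$ shows the right-hand side equals $\frac{\chi}{\pi_0{\pi_1}}(\mE_v(k')+\gamma_{k'})\gamma_{k'}$, which is the first claim.

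For the second claim, suppose the step is accepted, i.e.\ $\rho_k > \pi_0$. By the definition \eqref{rhodef} of $\rho_k$ and the penalty update rule \eqref{merit_rule}, acceptance gives $\ared_k(p_k) + \xi(\ef+\nu_k\ec) > \pi_0(\pred_k(p_k) + \xi(\ef+\nu_k\ec)) \geq \pi_0({\pi_1}\nu_k\vpred_k(p_k) + \xi(\ef+\nu_k\ec))$, hence
\[
\ared_k(p_k) > \pi_0{\pi_1}\nu_k\vpred_k(p_k) - (1-\pi_0)\xi(\ef+\nu_k\ec).
\]
Since $\ared_k(p_k) = \tphi(x_k,\nu_k) - \tphi(x_{k+1},\nu_k) = \nu_k(\tilde\Phi(x_k,\nu_k) - \tilde\Phi(x_{k+1},\nu_k))$, dividing by $\nu_k$ and recalling $\xi = 2/(1-\pi_0)$ and $e_{k'} = \ef/\nu_{k'}+\ec \geq \ef/\nu_k + \ec$ (again monotonicity of $\nu$, and here $k>k'$ so the bound $e_k \le e_{k'}$ holds) yields
\[
\tilde\Phi(x_k,\nu_k) - \tilde\Phi(x_{k+1},\nu_k) > \pi_0{\pi_1}\vpred_k(p_k) - 2 e_{k'}.
\]
Now insert the first claim's lower bound on $\vpred_k(p_k)$: the right-hand side exceeds $\chi(\mE_v(k')+\gamma_{k'})\gamma_{k'} - 2e_{k'}$. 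The remaining work is purely algebraic: by the definition \eqref{defi2} of $\gamma_{k'}$, the number $\gamma_{k'} - \mu$ is the positive root of $\chi t^2 + \chi\mE_v(k')t - 2e_{k'} = 0$, i.e.\ $\chi(\gamma_{k'}-\mu)^2 + \chi\mE_v(k')(\gamma_{k'}-\mu) = 2e_{k'}$. Expanding $\chi(\mE_v(k')+\gamma_{k'})\gamma_{k'} - 2e_{k'}$ and substituting this identity collapses the expression to $\chi\mu^2 + \mu(2\chi\gamma_{k'} + \chi\mE_v(k') - 2\chi\mu)$; using once more that $2\gamma_{k'} - 2\mu + \mE_v(k') = \sqrt{\mE_v(k')^2 + 8e_{k'}/\chi}$ from the quadratic formula gives exactly $\chi\mu^2 + \mu\sqrt{\chi^2\mE_v(k')^2 + 8\chi e_{k'}}$, which is \eqref{meritdec}.

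The main obstacle I anticipate is bookkeeping rather than conceptual depth: one must be careful that every quantity indexed by $k'$ ($\mE_v$, $e$, $\gamma$, $\bar\Delta$) can legitimately replace its $k$-indexed counterpart in the hypotheses of Lemma~1 and Lemma~\ref{tr-lbBO}, and this hinges entirely on $\nu_k$ being non-decreasing together with $k > k'$. The final algebraic reduction — recognizing $\gamma_{k'}-\mu$ as a root of a specific quadratic and using that to simplify $\chi(\mE_v(k')+\gamma_{k'})\gamma_{k'}-2e_{k'}$ — is the one spot where a sign slip or a mishandled $\mu$-term would derail the identity, so I would carry that out explicitly; everything else follows by direct substitution into results already proved.
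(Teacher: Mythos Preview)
Your proposal is correct and follows essentially the same route as the paper's proof: invoke the monotonicity $\mE_v(k)\le\mE_v(k')$ and $e_k\le e_{k'}$ to transfer the hypothesis to the form required by Lemma~\ref{tr-lbBO}, use \eqref{jegg} together with $\Delta_k\ge\bar\Delta_{k'}/\tau$ to lower-bound the minimum in \eqref{lem1_res}, unwind the definition of $\chi$ for the first claim, and then combine acceptance with \eqref{merit_rule} and divide by $\nu_k$ for the second. Your treatment of the final algebra---recognizing $\gamma_{k'}-\mu$ as the positive root of $\chi t^2+\chi\mE_v(k')t-2e_{k'}=0$---is a slightly cleaner packaging of the same expansion the paper carries out line by line in \eqref{leftout}.
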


\begin{proof} 

We first note that since $\nu_k$ can only be increased throughout the optimization process, 
\be \label{comp_errs}
\mE_v(k') \geq \mE_v(k);\quad e_{k'} \geq e_k. 
\ee
{Combining this fact with \eqref{suppBO}, we
have:
\be {\|\tilde A_k^T \tilde c_k\|} >\frac{\xi M_c}{{\pi_1} \zeta}  (\eg / \nu_k + \eA) +  \gamma.\ee

note that condition \eqref{lem3asp} in Lemma~\ref{tr-lbBO} holds, as we take 
$\gamma = \gamma_{k'}.$ Consequently, part 1 of the proof of Lemma~\ref{tr-lbBO} applies and we have that \eqref{jegg} is satisfied.} 
It follows that

\be\begin{split}
\label{tr-lem36_1BO}
\min \left(\zeta\Delta_k,\frac{\|\tA_k^T \tc_k\|}{\|\tA_k^T\tA_k\|}\right)
&\stackrel{\geq}{\mtiny{\labelcref{suppBO}}} \min\left(\frac{\zeta}{\tau}\bar\Delta_{k'},\frac{\|\tA_k^T \tc_k\|}{\|\tA_k^T\tA_k\|}\right)\\
&{\stackrel{\geq}{\mtiny{\labelcref{jegg}}} \frac{\zeta}{\tau} \bar\Delta_{k'}} \\ 
&=\frac{ {\pi_1}   \zeta^2}{\tau\xi  \max(1,M_c) M}\gamma_{k'}.
\end{split}
\ee
By \eqref{lem1_res}, 
\be 
\label{vpred_gamma}
\begin{split}
\vpred_k(p_k) 
    &\geq \frac{\|\tA_k^T \tc_k\|}{2\|\tc_k\|}\min\left(\zeta\Delta_k,\frac{\|\tA_k^T \tc_k\|}{\|\tA_k^T\tA_k\|}\right)\\
    & \stackrel{\geq}{\mtiny{\labelcref{suppBO}\labelcref{tr-lem36_1BO}}}
    	\frac1{2\|\tilde c_k\|} \left(\frac{\xi M_c}{{\pi_1} \zeta}  (\eg / \nu_{k'} + \eA) +  \gamma_{k'}\right)\frac{ {\pi_1}   \zeta^2}{\tau\xi  \max(1,M_c) M}\gamma_{k'}\\
    & \stackrel{\geq}{\mtiny{\labelcref{constants}}}
    	\frac12 \left(\frac{\xi}{{\pi_1} \zeta}  (\eg / {\nu_{k'}} + \eA) +  \frac{\gamma_{k'}}{M_c}\right)\frac{ {\pi_1}   \zeta^2}{\tau\xi  \max(1,M_c) M}\gamma_{k'}\\
    & = 
    	\frac{ {\pi_1}   \zeta^2}{2\tau\xi M_c \max(1,M_c) M} \left(\frac{\xi M_c}{{\pi_1} \zeta}  (\eg / {\nu_{k'}} + \eA) +  \gamma_{k'}\right)\gamma_{k'}\\
    & = 
    	\frac{\chi}{\pi_0 {\pi_1}}\left(\mE_v(k')+  \gamma_{k'}\right)\gamma_{k'}.
\end{split}
\ee
This proves the first part of the lemma.

Let the step $p_k$ be accepted. Then by line~16  of the Algorithm~1 and definition \eqref{rhodef} of $\rho_k$ and definition of $\xi$ in line 3 of the Algorithm,
\be \label{redex}
    \ared_k > \pi_0 \pred_k + (\pi_0 -1)\xi(\eps_f + \nu_k \eps_c) = \pi_0 \pred_k - 2(\eps_f + \nu_k \eps_c) .
\ee
Recalling the definition of $\ared_k$ and  condition \eqref{merit_rule}  
\be \label{redex1}
\tilde \phi(x_k , \nu_k) - \tilde \phi(x_k + p_k , \nu_k) > \pi_0 {\pi_1} \nu_k \vpred_k - 2(\eps_f + \nu_{k} \eps_c).
\ee
Dividing through by $\nu_k$, and using the relationship $e_{k'}\geq e_k$ we obtain
\be \begin{split}
\tilde \Phi(x_k , \nu_k) - \tilde \Phi(x_k + p_k , \nu_k) &> \pi_0 {\pi_1} \vpred_k - 2 e_k  \\ 
		& \geq   \pi_0 {\pi_1} \vpred_k - 2 e_{k'} .
\end{split} \label{redex2}\ee
We use  \eqref{vpred_gamma} 
to obtain
\be 
\begin{split} 
    & \tilde \Phi(x_k , \nu_k) - \tilde \Phi(x_k + p_k , \nu_k) \\
    & > \pi_0{\pi_1}\vpred_k - 2 e_{k'}\\
    & = \chi (\mE_v(k')+\gamma_{k'}) \gamma_{k'} - 2 e_{k'}\\
    & = \frac\chi4 \left[2\mE_v(k')+ \left(-\mE_v(k') + \sqrt{\mE_v(k')^2 + 8 e_{k'} / \chi} \right)+2\mu\right] \left(-\mE_v(k') + \sqrt{\mE_v(k')^2 + 8 e_{k'} / \chi } + 2\mu\right)- 2 e_{k'}\\
    & = \frac\chi4 \left[\mE_v(k') + \left(\sqrt{\mE_v(k')^2 + 8 e_{k'} / \chi} +2\mu\right)\right] \left[-\mE_v(k') + \left(\sqrt{\mE_v(k')^2 + 8 e_{k'} / \chi } + 2\mu\right)\right]- 2 e_{k'}\\
    & = \frac\chi4 \left[\left(\sqrt{\mE_v(k')^2 + 8 e_{k'} / \chi } + 2\mu\right)^2-\mE_v(k')^2\right]- 2 e_{k'} \\
    & = \frac\chi4 \left[ 8 e_{k'} / \chi + 4\mu^2 + 4\mu\sqrt{\mE_v(k')^2 + 8 e_{k'} / \chi }\right]-2 e_{k'}\\
    & = \chi \mu^2 +\mu \sqrt{\chi^2\mE_v(k')^2 + 8\chi e_{k'}}.  
\end{split} \label{leftout}
\ee

\end{proof}

\begin{obs}[Monotonicity of Rescaled Merit Function]
\label{monomerit} 
By Assumption~2, {$\{f_k\}$} is bounded below. We may thus redefine the objective function (by adding a constant) so that for all $x_k$, $\tilde f(x_k) > 0$, without affecting  the problem or the algorithm. As a consequence,  for any iterate $x_k$ and merit parameters $\nu_a \geq \nu_b$, the rescaled merit function satisfies  
	\be \label{monoq} \tilde \Phi(x_k , \nu_a) - \tilde \Phi(x_k , \nu_b)\leq 0, \ee 
since
$
		\tilde \Phi(_k , \nu_a) - \tilde \Phi(x_k , \nu_b) = \left( \frac{1}{\nu_a} -\frac{1}{\nu_b} \right) \tilde f(x_k) \leq 0. 
	$
\end{obs}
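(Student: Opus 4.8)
The plan is to prove the two assertions of the observation in turn: first, that replacing the objective by an additive shift which is positive at every iterate alters neither the problem \eqref{problem} nor the run of Algorithm~\ref{algorithmBO}; second, that, granted this positivity, the rescaled merit function \eqref{scaled} is nonincreasing in the penalty parameter.

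For the first assertion, I would begin from Assumption~2, which furnishes a finite number $\ell$ with $\tilde f(x_k) \geq \ell$ for every $k$. Setting $\hat f := f - \ell + 1$ (equivalently, shifting the noisy value to $\tilde f - \ell + 1$, which leaves $\delta_f$ untouched) makes the objective strictly positive at all iterates. The one point that deserves explicit checking is that this shift is invisible to the algorithm. Indeed, the multiplier subproblem \eqref{lm}, the normal and full step subproblems \eqref{vprob} and \eqref{hprob_exact}, and the Hessian approximation $\tW_k$ depend only on $\tilde g_k$, $\tilde A_k$, $\tilde c_k$, none of which changes; the predicted reduction \eqref{pred} is a difference $m_k(0) - m_k(p_k)$ in which the constant $\tilde f(x_k)$ term cancels; and the actual reduction \eqref{ared} is a difference $\tphi(x_k,\nu_k) - \tphi(x_k + p_k,\nu_k)$ in which the shift cancels as well. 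Consequently the penalty update loop, the ratio $\rho_k$, and the acceptance and trust-region rules are all unchanged, so the shifted problem generates the identical iterate sequence. We may therefore assume from the outset that $\tilde f(x_k) > 0$ for all $k$.

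For the second assertion, fix an iterate $x_k$ and penalty parameters $\nu_a \geq \nu_b > 0$. In the definition \eqref{scaled} the feasibility term $\|\tilde c(x_k)\|$ carries no dependence on $\nu$ and therefore cancels, leaving
\[
\tilde\Phi(x_k,\nu_a) - \tilde\Phi(x_k,\nu_b) = \Bigl(\tfrac{1}{\nu_a} - \tfrac{1}{\nu_b}\Bigr)\,\tilde f(x_k).
\]
Since $\nu_a \geq \nu_b > 0$ gives $\tfrac{1}{\nu_a} - \tfrac{1}{\nu_b} \leq 0$, and since $\tilde f(x_k) > 0$ by the first assertion, the product is nonpositive, which is exactly \eqref{monoq}.

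I do not anticipate any real obstacle: the content is essentially bookkeeping, and the only step one must not gloss over is the invariance of Algorithm~\ref{algorithmBO} under an additive constant in the objective, verified as above. The role of this observation in what follows is to permit comparisons of $\tilde\Phi$ at a common iterate across the nondecreasing sequence of penalty parameters produced by the algorithm.
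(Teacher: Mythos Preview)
Your proposal is correct and follows the same approach as the paper, which simply records the identity $\tilde\Phi(x_k,\nu_a)-\tilde\Phi(x_k,\nu_b)=(1/\nu_a-1/\nu_b)\tilde f(x_k)$ inline and asserts without proof that the additive shift leaves the problem and algorithm unchanged. Your explicit verification of this invariance (that every algorithmic quantity depends on $\tilde f$ only through differences or through $\tilde g_k$) is more careful than the paper's treatment and is a welcome addition.
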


	

{We can now show that the measure of stationarity for feasibility can be reduced to a level consistent with the noise present in the problem.}

\begin{proposition}[Finite Time Entry to Critical Region I of Feasibility]\label{prop1}
	Suppose that  Assumptions~1 through 3 are satisfied. Let $k'$ {denote the anchor iterate mentioned above}.
	Then, the sequence of iterates $\{x_k\}$ generated by Algorithm~\ref{algorithmBO} visits infinitely often the critical region $C_{Ac}^{I}(k')$ be defined as
     \be C_{Ac}^{I}(k') =\left\{x: \| A(x)^T  c(x)\| \leq  \mE_v(k') + \eps_A M_c + \eps_c M_A + \eps_A\eps_c+\gamma_{k'} :=\mathcal{E}_{Ac}^I. \right\} 
    \label{EAcI}\ee
    (We write  $\mathcal{E}_{Ac}^I$instead of $\mathcal{E}_{Ac}^I(k')$ for ease of notation).
\end{proposition}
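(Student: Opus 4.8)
The plan is to argue by contradiction: suppose that the iterates eventually stay out of the critical region $C_{Ac}^{I}(k')$, i.e. there is an index $K$ such that $\|A(x_k)^T c(x_k)\| > \mathcal{E}_{Ac}^I$ for all $k \ge K$. The first step is to translate this ``noiseless'' lower bound into a lower bound on the noisy quantity $\|\tilde A_k^T \tilde c_k\|$. Writing $\tilde A_k = A_k + \delta_A$, $\tilde c_k = c_k + \delta_c$ and expanding $A_k^T c_k = \tilde A_k^T \tilde c_k - \delta_A^T \tilde c_k - \tilde A_k^T \delta_c + \delta_A^T\delta_c$, one bounds the cross terms using Assumptions~2 and 3 (i.e. $\|\tilde c_k\| \le M_c$, $\|\tilde A_k\| \le M_A$, $\|\delta_A\|\le\eps_A$, $\|\delta_c\|\le\eps_c$) to get $\|\tilde A_k^T\tilde c_k\| \ge \|A_k^T c_k\| - (\eps_A M_c + \eps_c M_A + \eps_A\eps_c)$. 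Combined with the assumed violation of $C_{Ac}^{I}(k')$, this yields exactly $\|\tilde A_k^T \tilde c_k\| > \mE_v(k') + \gamma_{k'}$ for all $k \ge K$ — precisely hypothesis \eqref{tr-ctaspBO} of Corollary~\ref{cor1} with $\gamma = \gamma_{k'}$.

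Next I would invoke Corollary~\ref{cor1} to obtain $\hat K \ge K$ such that $\Delta_k > \bar\Delta_{k'}/\tau$ for all $k \ge \hat K$. At this point both conditions in \eqref{suppBO} of Lemma~\ref{noisyfuncredBO} hold for every $k \ge \hat K$ (the first is the noisy Jacobian bound just derived, the second is the trust-region lower bound), so the lemma applies at every such iterate. Now I would separate iterates into accepted and rejected ones. On accepted iterates, \eqref{meritdec} gives a fixed positive decrease $\chi\mu^2 + \mu\sqrt{\chi^2\mE_v(k')^2 + 8\chi e_{k'}} =: \delta_0 > 0$ in the rescaled merit function $\tilde\Phi(\cdot,\nu_k)$; on rejected iterates $x_{k+1} = x_k$ so $\tilde\Phi$ with a fixed parameter is unchanged. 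The subtlety is that $\nu_k$ may increase; but $\nu_k$ is non-decreasing and, when the feasibility measure stays bounded away from its noise floor, one should check that $\nu_k$ is in fact eventually constant — or, more cleanly, use Observation~\ref{monomerit}: each time $\nu$ is increased, $\tilde\Phi(x_k,\nu_{\text{new}}) \le \tilde\Phi(x_k,\nu_{\text{old}})$, so penalty increases only help. Hence along the subsequence $k \ge \hat K$ the quantity $\tilde\Phi(x_k,\nu_k)$ is monotonically non-increasing and drops by at least $\delta_0$ each time a step is accepted.

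The final step is to rule out infinitely many accepted steps: infinitely many accepted steps would drive $\tilde\Phi(x_k,\nu_k) \to -\infty$, contradicting that $\tilde f$ is bounded below (hence, after the shift in Observation~\ref{monomerit}, $\tilde f_k > 0$ and $\|\tilde c_k\| \ge 0$, so $\tilde\Phi(x_k,\nu_k) > 0$). Therefore only finitely many steps are accepted after $\hat K$, so there is an index beyond which every step is rejected and $\Delta_{k+1} = \Delta_k/\tau$, forcing $\Delta_k \to 0$ — which contradicts the lower bound $\Delta_k > \bar\Delta_{k'}/\tau$ from Corollary~\ref{cor1}. This contradiction shows that $\{x_k\}$ must enter $C_{Ac}^{I}(k')$; repeating the argument after each visit (the bound is uniform in $k \ge k'$ since everything depends on $k$ only through the non-decreasing $\nu_k$, and $\mE_v, e_{k'}, \gamma_{k'}$ are monotone in $\nu$) shows it is visited infinitely often. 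The main obstacle I anticipate is handling the possible unbounded growth of $\nu_k$ cleanly — making sure the merit-function telescoping argument is valid when the penalty parameter changes — which is exactly what Observation~\ref{monomerit} is designed to resolve, and confirming that the decrease constant $\delta_0$ does not degrade to zero as $\nu_k\to\infty$ (it does not, since $\gamma_{k'}$ is defined via the fixed anchor $k'$).
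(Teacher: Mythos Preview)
Your proposal is correct and follows essentially the same route as the paper's proof: contradiction hypothesis, conversion from the noiseless to the noisy feasibility stationarity measure via the expansion of $\tilde A_k^T\tilde c_k$, application of Corollary~\ref{cor1} with $\gamma=\gamma_{k'}$ to bound $\Delta_k$ from below, then Lemma~\ref{noisyfuncredBO} together with Observation~\ref{monomerit} to obtain a fixed decrease in $\tilde\Phi(\cdot,\nu_k)$ on accepted steps, and finally the dichotomy ``infinitely many accepted steps $\Rightarrow$ $\tilde\Phi\to-\infty$'' versus ``finitely many $\Rightarrow$ $\Delta_k\to 0$'' to reach a contradiction. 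The only cosmetic differences are that the paper expands $(A_k+\delta_A)^T(c_k+\delta_c)$ rather than $(\tilde A_k-\delta_A)^T(\tilde c_k-\delta_c)$, and it phrases the contradiction hypothesis directly as ``$x_k\notin C_{Ac}^I(k')$ for all $k>K$'', which already subsumes your closing ``repeat after each visit'' remark.
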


\begin{proof}
	We proceed by  means of contradiction. Assume  that there exist an integer {$K> k'$}, such that for all $k>K$, none of the iterates is contained in $C_{Ac}^{I}(k')$, i.e.
    \be \| A(x_k)^T  c(x_k)\| >  \mE_v(k') + \eps_A M_c + \eps_c M_A + \eps_A\eps_c+\gamma_{k'}.\label{AcRelation}\ee
    Therefore, for all $k>K$,
    \be\label{tAc}
        \begin{split}
            &\| \tA(x_k)^T  \tc(x_k)\|\\
            &= \| [A(x_k)+\delta_A(x_k)]^T  [c(x_k)+\delta_c(x_k)]\|\\
            &\geq \| A(x_k)^T  c(x_k)\| - \| A(x_k)^T\delta_c(x_k)\| - \|\delta_A(x_k)^T c(x_k)\| - \|\delta_A(x_k)^T\delta_c(x_k)\|\\
            &\stackrel{>}{\mtiny{\labelcref{AcRelation}}}\mE_v(k') + \eps_A M_c + \eps_c M_A + \eps_A\eps_c+\gamma_{k'} -  (\eps_A M_c + \eps_c M_A + \eps_A\eps_c)\\
            &=\mE_v(k') + \gamma_{k'}\\
            &\stackrel{\geq}{\mtiny{\labelcref{comp_errs}}} \mE_v(k) + \gamma_{k'}.
        \end{split}
    \ee
    Therefore \cref{cor1} applies with $\gamma = \gamma_{k'}$,
    implying that there is an index $\hat K$ such that for $k = \hat K, \hat K+1, ... $, we have 
    \be \label{prop1_delta_bd}
    \Delta_k>\frac1\tau\bar\Delta_{k'}.
    \ee 
    We then apply \cref{noisyfuncredBO} for $k = \hat K, \hat K+1, ... $, to conclude that all accepted steps satisfy
    \be
     \tilde \Phi(x_k , \nu_k) - \tilde \Phi(x_{k+1} , \nu_k) > \chi \mu^2 +\mu \sqrt{\chi^2\mE_v(k')^2 + 8\chi e_{k'}.}
    \ee 
    
     Furthermore, there are infinitely many accepted steps after $\hat K$, since otherwise there exists an iterate $\hat K'$ such that for all iterates $k\geq K'$ the steps are rejected, and by line 19 of the Algorithm~1 we would have that $\Delta_k\rightarrow 0$ as $k\rightarrow \infty$, contradicting \eqref{prop1_delta_bd}.
	
	Therefore, we focus on the iterates after $\hat K$ for which the step is accepted. They
 form a subsequence $\{x_{k_j}\}$, for $j = 1, 2, ...$. We note that for any $j$, 
	\be \|\tilde A(x_{k_j})^T \tilde c(x_{k_j})\| >  \mE_v(k) + \gamma_{k'},\quad \Delta_{k_{j}} > \frac{\bar \Delta_{k'}}{\tau}.\ee 
	By \eqref{monoq} and \eqref{meritdec},
	\be 
	\begin{split}
	\tilde\Phi(x_{k_j} , \nu_{k_j}) -\tilde\Phi(x_{k_j+1} , \nu_{k_j+1})  
	= & ~ \tilde\Phi(x_{k_j} , \nu_{k_j}) -\tilde\Phi(x_{k_j+1} , \nu_{k_j}) + \tilde\Phi(x_{k_j+1} , \nu_{k_j}) -\tilde\Phi(x_{k_j+1} , \nu_{k_j+1})\\
	\geq &~ \tilde\Phi(x_{k_j} , \nu_{k_j}) -\tilde\Phi(x_{k_j+1} , \nu_{k_j})\\
	\geq &~ \chi \mu^2 +\mu \sqrt{\chi^2\mE_v(k')^2 + 8\chi e_{k'}}.
	\end{split}
	\ee
	
	Since there are infinitely many accepted steps, this implies that $\{\tilde\Phi(x_{k_j}, \nu_{k_j} \}$ is unbounded below, which is not possible since $\{\tilde f_k\}$ is bounded below by Assumption 2. This contradiction completes the proof.
\end{proof}


{This result addresses the scenario in which the Jacobian $ \tA_k $ undergoes a loss of rank. Specifically, we show that $ \|\tA^T \tc \| $ falls below a noise-scaled threshold in every case. Similar to the classical setting, the smallness of $ \|\tA^T \tc\| $ may indicate that $ \tA $ is nearing singularity.  Furthermore, in \cref{cor2} we establish that if $ \tA $ stays sufficiently far from singularity, then $\| \tc \|$ decreases below a noise-scaled threshold.}

The following lemma helps measure how far can the iterates  stray away from the region  $C_{Ac}^{I}(k')$, after exiting this region and before returning
 to it. 

\begin{lemma}[Displacement Bound Outside of Critical Region I]\label{disp_v}
Let Assumptions 1 through 3 be satisfied and let $k'$ be the anchor iterate used in the previous results. Let $k_1 > k'$ be such that $x_{k_1} \in C_{Ac}^{I}(k')$  and $x_{k_{1}+1}\notin C_{Ac}^{I}(k')$.
Then, if $\Delta_{k_1} < \bar\Delta_{k'}$,  there exist a finite iterate $k_2\geq k_{1}+1$, defined as
\be \label{k2}
k_2 = \min \left\{ k \geq k_1 + 1 : \Delta_k \geq \bar{\Delta}_{k'} \text{ or } x_k \in C_{Ac}^I(k') \right\}.
\ee
Furthermore, for any $k$ with $k_1 \leq k \leq k_2$ we have that
\be \| x_{k} - x_{k_1} \| \leq \frac{\tau}{\tau-1} \bar\Delta_{k'} \ee
\end{lemma}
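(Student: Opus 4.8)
The goal is to bound the total displacement $\|x_k - x_{k_1}\|$ for $k_1 \le k \le k_2$, where $k_2$ is the first iterate after $k_1$ at which either the trust region has grown to $\bar\Delta_{k'}$ or we have re-entered $C_{Ac}^I(k')$. The key observation is that on the intermediate iterates $k_1+1, \dots, k_2-1$, two things hold simultaneously: (i) $x_k \notin C_{Ac}^I(k')$, so by the same computation as in \eqref{tAc} in the proof of Proposition~\ref{prop1} the noisy quantity $\|\tilde A_k^T \tilde c_k\|$ exceeds $\mE_v(k') + \gamma_{k'} \ge \mE_v(k) + \gamma_{k'}$; and (ii) $\Delta_k < \bar\Delta_{k'} = \bar\Delta(\gamma_{k'})$. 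These are precisely the hypotheses needed to invoke Part~2 of Lemma~\ref{tr-lbBO} (with $\gamma = \gamma_{k'}$): at every such iterate the step is accepted and $\Delta_{k+1} = \tau\Delta_k$. So the trust region radius grows geometrically by a factor $\tau$ at each step from $k_1+1$ up to $k_2$.

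First I would establish the base case: since $\Delta_{k_1} < \bar\Delta_{k'}$ and — if $x_{k_1+1}\notin C_{Ac}^I(k')$ means we did not re-enter, but we must check whether Lemma~\ref{tr-lbBO} applies \emph{at} $k_1$ itself; in fact the hypothesis $x_{k_1+1}\notin C_{Ac}^I(k')$ together with \eqref{tAc}-style reasoning at $k_1+1$ shows $\|\tilde A_{k_1+1}^T\tilde c_{k_1+1}\|$ is large, and so on. The cleanest route is: for each $j$ with $k_1 \le j \le k_2 - 1$, either $\Delta_j \ge \bar\Delta_{k'}$ (impossible for $j < k_2$ by the minimality in \eqref{k2}) or $x_j \in C_{Ac}^I(k')$ (possible only for $j = k_1$); in the latter case one still has $x_{j+1}\notin C_{Ac}^I(k')$ is not what we need — rather, I would argue directly that on $j = k_1+1,\dots,k_2-1$ we have $x_j\notin C_{Ac}^I(k')$ and $\Delta_j < \bar\Delta_{k'}$, hence Lemma~\ref{tr-lbBO} gives $\Delta_{j+1} = \tau\Delta_j$ and $\|x_{j+1}-x_j\| = \|p_j\| \le \Delta_j$. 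For $j = k_1$ itself, whether or not the step is accepted, $\|x_{k_1+1}-x_{k_1}\| \le \Delta_{k_1}$ (if rejected the displacement is zero). Combining, $\Delta_{k_1+i} \le \tau^i \Delta_{k_1}$ for all $i$ with $k_1 + i \le k_2$, and since the last radius in the chain, $\Delta_{k_2-1}$, is still $< \bar\Delta_{k'}$ while $\Delta_{k_2} \le \tau \Delta_{k_2-1}$, every $\Delta_j$ for $k_1 \le j \le k_2-1$ satisfies $\Delta_j \le \bar\Delta_{k'}$ (more precisely $\Delta_j < \bar\Delta_{k'}$ for $j \le k_2 -1$).

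Then I would sum the step lengths telescopically. For $k_1 \le k \le k_2$,
\be
\|x_k - x_{k_1}\| \le \sum_{j=k_1}^{k-1} \|p_j\| \le \sum_{j=k_1}^{k-1} \Delta_j \le \sum_{j=k_1}^{k_2-1} \Delta_j .
\ee
Writing $\Delta_{k_2-1} =: \Delta^* < \bar\Delta_{k'}$ and using $\Delta_{k_2-1-i} = \tau^{-i}\Delta^*$ (the radius grew by $\tau$ at each of the intermediate steps), the sum is bounded by $\Delta^*\sum_{i \ge 0} \tau^{-i} = \frac{\tau}{\tau-1}\Delta^* < \frac{\tau}{\tau-1}\bar\Delta_{k'}$, which is the claimed bound.

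\textbf{Main obstacle.} The delicate point is the bookkeeping at the two endpoints of the chain — confirming that Lemma~\ref{tr-lbBO} genuinely applies at every iterate $j$ with $k_1+1 \le j \le k_2-1$ (so that the radius really does multiply by $\tau$ each time and the geometric-series bound is valid), and handling the iterate $k_1$ separately since $x_{k_1} \in C_{Ac}^I(k')$ so the lemma need not apply there. One must verify that the definition of $k_2$ in \eqref{k2} correctly guarantees $x_j \notin C_{Ac}^I(k')$ and $\Delta_j < \bar\Delta_{k'}$ for all intermediate $j$, which forces acceptance and geometric growth; the noisy-to-noiseless translation of the stationarity condition (going from $\|A_j^T c_j\| > \mathcal{E}_{Ac}^I$ to $\|\tilde A_j^T \tilde c_j\| > \mE_v(j) + \gamma_{k'}$) is the same calculation as \eqref{tAc} and should just be cited. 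Everything else is a routine geometric-series estimate.
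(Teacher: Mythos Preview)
Your proposal is correct and follows essentially the same argument as the paper: invoke Lemma~\ref{tr-lbBO} at each intermediate iterate $j\in\{k_1+1,\dots,k_2-1\}$ to obtain $\Delta_{j+1}=\tau\Delta_j$, then bound $\|x_k-x_{k_1}\|$ by summing the resulting geometric series $\sum_j\Delta_j<\bar\Delta_{k'}\sum_{i\ge0}\tau^{-i}$. The paper additionally isolates the finiteness of $k_2$ as a separate contradiction argument, but this is an immediate consequence of the geometric growth you already establish; note also that the step at $k_1$ is necessarily accepted (since $x_{k_1+1}\neq x_{k_1}$), so $\Delta_{k_1+1}=\tau\Delta_{k_1}$ holds there as well and no special endpoint treatment is needed.
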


\begin{proof}
We  show the first part of the lemma by  means of contradiction. Assume for contradiction that $k_2$ is not finite. Therefore, for $k = k_1+1, k_1+2 , ... $, 
\be \Delta_k < \bar\Delta_{k'}\label{lm5_delta_bd}\ee 
and
\be x_k\notin C_{Ac}^{I}(k'),\ee 
which as argued in \eqref{tAc}, implies
\be  \|\tA_k^T \tc_k\|\geq \mE_v(k) +\gamma_{k'}. \ee
Therefore we apply \cref{tr-lbBO} for each iterate $k \geq k_1+1$ and obtain that $\Delta_k\rightarrow \infty$ as $k\rightarrow \infty$, contradicting \eqref{lm5_delta_bd}.

For the rest of the lemma, we take any $k$ with $k_1 < k < k_2$ and have that $x_k \notin C_{Ac}^{I}(k')$,
and thus again as argued in \eqref{tAc},
\be\|\tilde A(x_k)^T \tilde c(x_k)\| >  \mE_v(k) + \gamma_{k'}. \ee
By assumption, each of the iterates $k= k_1 , ... , k_2-1$ satisfy 
$\Delta_{k} < \bar\Delta_{k'}.$ Therefore by \cref{tr-lbBO}, 
$ \Delta_{k+1} = \tau \Delta_k $, and thus for $i = 0, 1, ..., k_2-k_1-1$
\be \Delta_{k_2-1-i} = \tau^{-i} \Delta_{k_2-1} < \tau^{-i} \bar \Delta_{k'}.\ee
It follows that
\begin{align*}
    \|x_{k} - x_{k_1} \| &\leq \sum_{i = 1}^{k - k_1} \|x_{k_1 + i} - x_{k_1 + i-1}\|
				\leq \sum_{i = 1}^{k_2 - k_1} \|x_{k_1 + i} - x_{k_1 + i-1}\|\\
				&\leq \sum_{j = k_1}^{k_2-1} \Delta_{j}
			         = \sum_{i = 0}^{ k_2 - k_1 - 1} \tau^{-i} \Delta_{k_2-1}\\
    				& < \bar \Delta_{k'}\sum_{i = 0}^{\infty}\tau^{-i}
    				 = \frac{\tau}{\tau-1} \bar \Delta_{k'},
\end{align*}
which concludes the proof.
\end{proof}

We now define the maximum value of the re-scaled, noiseless merit function $\Phi(x,\nu)$ (defined in \eqref{nonscale}) in $C_{Ac}^{I}(k')$:  
\be \bar\Phi_{Ac}^I(k') = \sup_{x\in C_{Ac}^{I}(k') ,\\ \nu \geq \nu_{k'}}  \Phi(x , \nu) \label{PhiIAc}.\ee
Similarly, we define 
\be \bar G^I_{Ac}(k') = \sup_{x\in C_{Ac}^{I}(k')} {\|g(x)\|}.\ee

\begin{proposition}[Remaining in Critical Region II of Feasibility]\label{prop2}
Once an iterate enters  $C_{Ac}^{I}(k')$, {the sequence $\{x_k\}$} never leaves the set $C_{Ac}^{II}(k')$ defined as
\be
 C_{Ac}^{II}(k') =\left\{x: \Phi( x , \nu )  \leq  \bar\Phi_{Ac}^I(k') +  \max(\mathcal{P}_{Ac}^{II}{(k')} , 2 e_{k'}  )+ 2 e_{k'}  := E_{Ac}^{II}\right\},
\label{EAcII}
\ee
where {$\Phi$ is defined in \eqref{nonscale}} and
\be \mathcal{P}_{Ac}^{II}{(k')}= \left[    \frac{\bar G^I_{Ac}(k')}{\nu_{k'} }   +\EAcI(k') +      \frac{ {\pi_1}  \tau \zeta (  L_f/\nu_{k'} + L_c )}{\xi (\tau-1) \max(1,M_c) M}\gamma_{k'}  \right] \frac{ {\pi_1}  \tau \zeta}{\xi (\tau-1) \max(1,M_c) M}\gamma_{k'}.\ee
\end{proposition}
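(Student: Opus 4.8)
## Proof Strategy for Proposition 2

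The plan is to show that the rescaled noiseless merit function $\Phi(x_k,\nu_k)$ can never exceed the threshold $E_{Ac}^{II}$ once an iterate has entered $C_{Ac}^{I}(k')$. The argument is a classical ``lift and fall'' bound: starting from a point inside the small region $C_{Ac}^{I}(k')$, I would track how much $\Phi$ can possibly \emph{increase} before the sequence returns to $C_{Ac}^{I}(k')$, and show this increase is bounded by the excess terms appearing in the definition of $E_{Ac}^{II}$. There are two distinct mechanisms causing an increase. First, while the iterates are outside $C_{Ac}^{I}(k')$ but the trust region is still small ($\Delta_k < \bar\Delta_{k'}$), \cref{disp_v} guarantees that the total displacement $\|x_k - x_{k_1}\|$ is bounded by $\tfrac{\tau}{\tau-1}\bar\Delta_{k'}$; combined with the Lipschitz-type bounds on $f$ and $c$ (Assumptions 1--2) and the bound $\bar G^I_{Ac}(k')$ on $\|g\|$ over $C_{Ac}^{I}(k')$, this controls the growth of $\Phi$ by roughly $\mathcal{P}_{Ac}^{II}(k')$. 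Second, whenever a step is actually accepted, \eqref{meritdec} in \cref{noisyfuncredBO} shows that $\tilde\Phi$ (the noisy rescaled merit function) \emph{decreases}; converting from $\tilde\Phi$ back to $\Phi$ costs at most $2e_{k'}$ by Assumption 3, and the penalty-parameter monotonicity handled in \cref{monomerit} ensures that changes in $\nu_k$ only help. So accepted steps never push $\Phi$ up by more than the noise-conversion term.

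Concretely, I would argue as follows. Suppose $x_{k_1} \in C_{Ac}^{I}(k')$. If $x_{k_1+1}$ is also in $C_{Ac}^{I}(k')$ there is little to show for that step beyond the single-step bound; otherwise invoke \cref{disp_v} to produce the finite index $k_2$ from \eqref{k2}, which is the first time after $k_1$ that either $\Delta_k \geq \bar\Delta_{k'}$ or the iterate re-enters $C_{Ac}^{I}(k')$. For $k$ in the range $k_1 \le k \le k_2$, use the displacement bound $\|x_k - x_{k_1}\| \le \tfrac{\tau}{\tau-1}\bar\Delta_{k'}$ together with the mean-value / Lipschitz estimates to bound $\Phi(x_k,\nu) - \Phi(x_{k_1},\nu)$; this gives the term $\mathcal{P}_{Ac}^{II}(k')$ (note the factor $\tfrac{\pi_1\tau\zeta}{\xi(\tau-1)\max(1,M_c)M}\gamma_{k'} = \tfrac{\tau}{\tau-1}\bar\Delta_{k'}$ matches the displacement bound, and $L_f/\nu_{k'} + L_c$ captures the Lipschitz growth of the rescaled merit function). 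Then handle the complementary case where $\Delta_k$ has grown past $\bar\Delta_{k'}$: here the hypotheses of \cref{noisyfuncredBO} are met (the stationarity and trust-region conditions \eqref{suppBO} hold as long as the iterate stays outside $C_{Ac}^{I}(k')$, which follows from \eqref{tAc}), so every accepted step strictly decreases $\tilde\Phi$, hence after conversion cannot increase $\Phi$ by more than $2e_{k'}$, while every rejected step leaves $x_k$ unchanged. Chaining these bounds from the last visit to $C_{Ac}^{I}(k')$ yields $\Phi(x_k,\nu_k) \le \bar\Phi_{Ac}^I(k') + \max(\mathcal{P}_{Ac}^{II}(k'), 2e_{k'}) + 2e_{k'} = E_{Ac}^{II}$ for all subsequent $k$.

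The main obstacle I anticipate is the bookkeeping needed to stitch together the two regimes (small trust region governed by \cref{disp_v}, versus large trust region governed by the merit decrease of \cref{noisyfuncredBO}) into a single uniform bound that holds for \emph{all} $k$ after the first entry, including excursions that alternate between the regimes. In particular, one must be careful that when $\Delta_k$ crosses $\bar\Delta_{k'}$ the iterate may still be far from $C_{Ac}^{I}(k')$, and one needs to argue that from that point the merit-decrease mechanism takes over before $\Phi$ can grow further — essentially that the ``worst case'' is a single displacement-driven lift of size $\mathcal{P}_{Ac}^{II}(k')$ followed only by decreases (modulo the $2e_{k'}$ noise conversions). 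Managing the penalty parameter $\nu_k$ throughout — using \cref{monomerit} to ensure increases in $\nu_k$ never raise $\Phi$, and that the suprema $\bar\Phi_{Ac}^I(k')$ and $\bar G^I_{Ac}(k')$ are taken over $\nu \ge \nu_{k'}$ so they remain valid upper bounds — is the other piece of care required. The remaining calculations (the explicit Lipschitz estimate producing $\mathcal{P}_{Ac}^{II}(k')$, and the conversions between $\tilde\Phi$ and $\Phi$) are routine given Assumptions 1--3.
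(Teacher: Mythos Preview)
Your proposal is correct and follows essentially the same approach as the paper: the proof there also splits on whether $\Delta_{k_1} \gtrless \bar\Delta_{k'}$, uses \cref{disp_v} and Lipschitz bounds to produce exactly the term $\mathcal{P}_{Ac}^{II}(k')$ in the small-radius regime, and uses \cref{noisyfuncredBO} (together with \cref{monomerit} and the $e_{k'}$ conversion between $\Phi$ and $\tilde\Phi$) to handle the large-radius regime, stitching them at the index $k_2$ precisely as you anticipate. The only organizational point to watch is that \cref{disp_v} carries the hypothesis $\Delta_{k_1} < \bar\Delta_{k'}$, so the case $\Delta_{k_1} \geq \bar\Delta_{k'}$ must be treated separately from the outset (the paper does this explicitly); your ``complementary case'' paragraph already contains the right argument for it.
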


\begin{proof}
We let $k_1$ and $k_2$  be defined as in the last lemma:
\be x_{k_1}\in C_{Ac}^{I}(k'), \quad x_{k_1+1}\notin C_{Ac}^{I}(k'),\ee
\be \label{k2_prop2}
k_2 = \min \left\{ k \geq k_1 + 1 : \Delta_k \geq \bar{\Delta}_{k'} \text{ or } x_k \in C_{Ac}^I(k') \right\},
\ee
and recall that $k_2$ is finite.

Since we consider only iterates $k$ with  $k\geq k'$, we have for $k = k_1, .... $
\be \label{phi_dist}
\begin{split}
    |\tilde \Phi(x_{k},\nu_k) - \Phi(x_{k},\nu_k) |
      \leq& | \delta_f(x_{k})/\nu_{k}| + \| \delta_c(x_{k}) \|\\
    \leq& \frac{\eps_f}{\nu_{k}} + \eps_c\\
    \leq& \frac{\eps_f}{\nu_{k'}} + \eps_c\\
     =& e_{k'},
\end{split}
\ee
where the last inequality follows from \eqref{defi1}.
Since the step from $k_1 $ is accepted, we have {that \eqref{redex}-\eqref{redex2} hold for $k= k_1$ and thus
\be \tilde\Phi(x_{k_1},\nu_{k_1}) - \tilde\Phi(x_{{k_1+1}},\nu_{{k_1}}) >  -2e_{k'}, \ee
}
By the monotonicity result 
\cref{monoq} we have that $\tilde\Phi(x_{k_1},\nu_{k_1}) - \tilde\Phi(x_{{k_1+1}},\nu_{k_1+1}) \geq \tilde\Phi(x_{k_1},\nu_{k_1}) - \tilde\Phi(x_{{k_1+1}},\nu_{k_1})$, and thus
\be \tilde\Phi(x_{k_1},\nu_{k_1}) - \tilde\Phi(x_{{k_1+1}},\nu_{{k_1+1}}) >   {-2e_{k'}}. \ee
Recalling definition \eqref{PhiIAc} and the fact that the $k_1$ iterate is in $C_{Ac}^I(k')$, we have
\be 
\tilde\Phi(x_{{k_1+1}},\nu_{{k_1+1}}) 
<  \tilde\Phi(x_{k_1},\nu_{k_1}) + {2e_{k'}}
\stackrel{<}{\mtiny{\labelcref{phi_dist}}} \Phi(x_{k_1},\nu_{k_1}) + {3e_{k'}}
\leq \bar\Phi_{Ac}^I(k')+ {3e_{k'}}. \label{exit_bd}\ee

{We divide the rest of the proof into two cases based
on whether  $\Delta_{k_1} \geq \bar\Delta_{k'}$ or not.

\medskip
{\bf Assume} $\Delta_{k_1} \geq {\bar{\Delta}_{k'}}$. For each $k = k_1+1, \ldots, k_2-1$, it follows that $x_k \notin C_{Ac}^I(k')$. According to \cref{tAc}, this implies $\|\tA_k^T \tc_k\| \geq \mE_v(k) + \gamma_{k'}$, so that condition \cref{lem3asp} in \cref{tr-lbBO} is satisfied. Now, for $k \in \{ k_1+1, \ldots, k_2-1\}$ the trust region radius can decrease, but by 
 \cref{tr-lbBO}, if at some point $\Delta_k < \bar\Delta_{k'}$ then $\Delta_{k+1} = \tau \Delta_k$. We deduce that $\Delta_k > \frac{\bar{\Delta}_{k'}}{\tau}$ for all  $k \in \{ k_1+1, \ldots, k_2-1\}$.
 We then apply \cref{noisyfuncredBO} 
 to conclude that each accepted step  reduces the merit function from $\tilde\Phi(x_{{k_1+1}},\nu_{{k_1+1}})$, so that by \cref{monoq} we have that for each step $k$ after the exiting iterate $k_1+1$,
\be\label{prop1_key1}
\Phi(x_{{k}},\nu_{{k}}) \leq \tilde\Phi(x_{{k}},\nu_{{k}}) + e_{k'} <\tilde\Phi(x_{{k_1+1}},\nu_{{k_1+1}})+ e_{k'} \stackrel{<}{\mtiny{\labelcref{exit_bd}}} \bar\Phi_{Ac}^I(k')+ 4e_{k'} \leq E_{Ac}^{II}. 
\ee
This concludes the proof for when $\Delta_{k_1} \geq {\bar{\Delta}_{k'}}$.
}

\medskip {\bf Assume} $\Delta_{k_1} < \bar\Delta_{k'}$. Using \cref{disp_v}, we can bound the displacement of iterates from $k_1$ to any $k = k_1 +1, \ldots, k_2$. Specifically, by \cref{disp_v}, for $k_1 \leq k \leq k_2$
\be \| x_{k} - x_{k_1} \| \leq \frac{\tau}{\tau-1} \bar\Delta_{k'}. \ee
By $L_f$ and $L_c$--Lipschitz differentiability of the objective and the constraints, respectively, we have for any $k= k_1 , ... , k_2$:
\be 
\begin{split}
f(x_k) - f(x_{k_1}) & \leq \max_{t\in[0,1]}\|g(t x_{k_1} +(1-t) x_k ) \| \|x_k - x_{k_1}\|\\
		      & \leq \left[\|g(x_{k_1})\| + L_f  \| \|x_k - x_{k_1}\|\right] \| \|x_k - x_{k_1}\|\\
		      & \leq \left[\bar G^I_{Ac}(k') + \frac{\tau L_f}{\tau-1}\bar\Delta_{k'}\right] \frac{\tau}{\tau-1}\bar\Delta_{k'}.
\end{split}
\ee
Similarly, for any $k_1 \leq k \leq k_2$, 
\be 
\begin{split}
\|c(x_k)\| - \| c(x_{k_1})\| & \leq \max_{t\in[0,1]}\|\grad c(t x_{k_1} +(1-t) x_k ) \| \|x_k - x_{k_1}\|\\
		      & \leq \left[\| A^T(x_{k_1})c(x_{k_1})\| + L_c  \| \|x_k - x_{k_1}\|\right] \| \|x_k - x_{k_1}\|\\
		      & \leq \left[\EAcI(k') + \frac{\tau L_c}{\tau-1}\bar\Delta_{k'}\right] \frac{\tau}{\tau-1}\bar\Delta_{k'}.
\end{split}
\ee
Using these two last results and recalling the definition \eqref{defi2} of $\bar \Delta_{k'}$ we find, for any $k_1 \leq k \leq k_2$,
\be \label{prop1_res1}
\begin{split}
\Phi(x_k,\nu_k) - \Phi(x_{k_1},\nu_{k_1})
	& =  \frac1{\nu_k} f(x_k) - \frac1{\nu_{k_1}} f(x_{k_1}) + \|c(x_k)\| - \| c(x_{k_1})\| \\
	& \leq \frac1{\nu_{k_1}} [f(x_k) - f(x_{k_1})]+ \|c(x_k)\| - \| c(x_{k_1})\| \\
	& \leq \frac1{\nu_{k_1}} \left[\bar G^I_{Ac}(k') + \frac{\tau L_f}{\tau-1}\bar\Delta_{k'}\right] \frac{\tau}{\tau-1}\bar\Delta_{k'}+\left[\EAcI + \frac{\tau L_c}{\tau-1}\bar\Delta_{k'}\right] \frac{\tau}{\tau-1}\bar\Delta_{k'}\\
	& \leq \frac1{\nu_{k'}} \left[\bar G^I_{Ac}(k') + \frac{\tau L_f}{\tau-1}\bar\Delta_{k'}\right] \frac{\tau}{\tau-1}\bar\Delta_{k'}+\left[\EAcI + \frac{\tau L_c}{\tau-1}\bar\Delta_{k'}\right] \frac{\tau}{\tau-1}\bar\Delta_{k'}\\
	& =   \left[    \frac{\bar G^I_{Ac}(k')}{\nu_{k'} }   + \EAcI +     \left(  \frac{L_f}{\nu_{k'} } + L_c\right) \frac{\tau \bar\Delta_{k'}}{\tau-1}  \right] \frac{\tau \bar\Delta_{k'}}{\tau-1}\\
	& =  \left[    \frac{\bar G^I_{Ac}(k')}{\nu_{k'} }   +\EAcI +      \frac{ {\pi_1}  \tau \zeta (  L_f/\nu_{k'} + L_c )}{\xi (\tau-1) \max(1,M_c) M}\gamma_{k'}  \right] \frac{ {\pi_1}  \tau \zeta}{\xi (\tau-1) \max(1,M_c) M}\gamma_{k'}\\
	&= \mathcal{P}_{Ac}^{II}(k').
\end{split}
\ee
Therefore we find for any $k_1 \leq k \leq k_2$,
\be\label{prop1_key2}
\begin{split}
    \Phi(x_k,\nu_k) &\leq \Phi(x_{k_1},\nu_{k_1}) + \mathcal{P}_{Ac}^{II} \\
                    & \stackrel{\leq}{\mtiny{\labelcref{phi_dist}}}  \tilde\Phi(x_{k_1},\nu_{k_1}) + \mathcal{P}_{Ac}^{II} + e_{k'} \\
                    & \stackrel{\leq}{\mtiny{\labelcref{exit_bd}}} \bar\Phi_{Ac}^I(k')+ \mathcal{P}_{Ac}^{II}+ 4e_{k'},\\
                    &\leq E_{Ac}^{II}(k').
\end{split}
\ee
If $x_{k_2}\in C_{Ac}^{I}(k')$, the proof is complete. 

On the other hand, if  $x_{k_2}\notin C_{Ac}^{I}(k')$, we only need to show that  \eqref{prop1_key2} is also satisfied by $k=k_2+1, ... ,\hat K$, where 
\be\hat K = \min\{k\geq k_2+1: x_{k}\in C_{Ac}^{I}(k')  \}.\ee
The existence of $\hat K$ is guaranteed by \cref{prop1}.


Setting $k = k_2$ in \eqref{prop1_res1} we get
\be
\Phi(x_{k_2},\nu_{k_2})  \leq \Phi(x_{k_1},\nu_{k_1})+  \mathcal{P}_{Ac}^{II},
\ee
which together with \eqref{phi_dist}  gives
\be\label{phi_k2}
\tilde \Phi(x_{k_2},\nu_{k_2})  \leq \Phi(x_{k_1},\nu_{k_1})+  \mathcal{P}_{Ac}^{II} + e_{k'}\leq \bar\Phi_{Ac}^I(k')+  \mathcal{P}_{Ac}^{II} + e_{k'},
\ee
where the last inequality is due to the fact that $k_1\in C_{Ac}^{I}(k')$.
Since that iterates have not yet returned into $C_{Ac}^{I}(k')$ at iterate $k_2$, we apply \cref{noisyfuncredBO} for each of the iterates after $k_2$ until iterates return to $C_{Ac}^{I}(k')$ again at iterate $\hat K$ (such iterate exist due to \cref{EAcI}) and obtain that 
\be \tilde\Phi(x_{k_2},\nu_{k_2}) > \tilde\Phi(x_{k_2+1},\nu_{k_2+1}) > ... > \tilde\Phi(x_{\hat K},\nu_{\hat K}). \label{prop1_res2}\ee
Recalling again \eqref{phi_dist}, we find that for $k = k_2+1, ... , \hat K$,
\be\label{prop1_key3}
\begin{split}
\Phi(x_{k},\nu_{k}) &\leq \tilde\Phi(x_{k},\nu_{k}) + e_{k'} \\
                    &\stackrel{<}{\mtiny{\labelcref{prop1_res2}}}  \tilde\Phi(x_{k_2},\nu_{k_2}) + e_{k'}\\
                    &\stackrel{\leq}{\mtiny{\labelcref{phi_k2}}} \bar\Phi_{Ac}^I(k')+  \mathcal{P}_{Ac}^{II} + 2e_{k'}
\end{split}
\ee
We now combine results from \cref{prop1_key1,prop1_key2,prop1_key3} and conclude the proof.

\end{proof}

\medskip\textit{Remark 2.} The results in \cref{prop1,prop2} depend on the \textit{anchor iterate} $k'$ and the corresponding merit parameter $\nu_{k'}$. As mentioned in Remark~1 (preceding  \eqref{scaled}), we fix the value of $k'$ throughout the analysis. As evident from \cref{EAcI} and \cref{EAcII}, and the definitions \eqref{defi1}-\eqref{defi2}, the sizes of the critical regions are inversely proportional to the value of $\nu_{k'}$.  This seemingly surprising fact is quite revealing. While the analysis presented above would hold if we fix $k'$  at the outset, say $k'=0$, we maintain this generality to make the results more expressive. For example, we will study later on the effect of the term $k'$ in the case when $\nu_k \rightarrow \infty$—which happens only if $\tA_k$ loses rank c.f. \cite{byrd2000trust}.



 \subsection{Feasibility Under the Full Rank Assumption}

If, during the run of Algorithm~1 the Jacobian remains full rank, 
we can establish a stronger result showing that the feasibility measure $\| c(x)\| $ is small. 

\medskip\noindent
\noindent\textbf{Assumption 4:} The singular values of the Jacobian $\{ \tilde A_k\} $ are bounded below by $\sigma_{\min} > 0$.

\medskip\noindent
The following result  follows readily from  \cref{prop1}. 

\begin{corollary}\label{cor2}
    Let Assumption 1 through 4 be satisfied. Then, the subsequence of iterates contained in  $C_{Ac}^I(k')$  satisfies
    \be 
        \| c(x_k)\| \leq  \frac{\mE_v(k') +\gamma_{k'}}{\sigma_{\min}}+\eps_c.
    \ee
\end{corollary}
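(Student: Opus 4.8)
The plan is to deduce \cref{cor2} directly from \cref{prop1} by combining the definition of the region $C_{Ac}^I(k')$ with the full-rank hypothesis in Assumption~4. By \cref{prop1}, the iterates $x_k$ lying in $C_{Ac}^I(k')$ satisfy $\|A(x_k)^T c(x_k)\| \leq \mathcal{E}_{Ac}^I$, where, recalling \eqref{EAcI}, $\mathcal{E}_{Ac}^I = \mE_v(k') + \eps_A M_c + \eps_c M_A + \eps_A\eps_c + \gamma_{k'}$. The key algebraic point is to pass from a bound on $\|A_k^T c_k\|$ to a bound on $\|c_k\|$ itself; this is exactly where Assumption~4 enters.

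First I would relate the \emph{noiseless} Jacobian $A_k$ to the \emph{noisy} one $\tilde A_k$, whose singular values are bounded below by $\sigma_{\min}$ per Assumption~4. One has to be a little careful here: Assumption~4 is stated for $\tilde A_k$, not $A_k$, so I would either (i) observe that for any vector $w$, $\|A_k^T w\| \geq \|\tilde A_k^T w\| - \|\delta_A(x_k)^T w\| \geq \sigma_{\min}\|w\| - \eps_A\|w\|$ when $m \le n$, or more cleanly (ii) split $c_k = (c_k + \delta_c(x_k)) - \delta_c(x_k) = \tilde c_k - \delta_c(x_k)$ and estimate $\|\tilde c_k\|$ via $\sigma_{\min}\|\tilde c_k\| \leq \|\tilde A_k^T \tilde c_k\|$ — but since the proposition bounds $\|A_k^T c_k\|$ rather than $\|\tilde A_k^T \tilde c_k\|$, the cleanest route matching the stated constant is to argue on the noiseless quantities. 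Concretely: writing the full-rank bound as $\|A_k^T w\| \geq \sigma_{\min}\|w\|$ for all $w$ in the row space (which holds if the singular values of $A_k$ are bounded below, a consequence one can extract from Assumption~4 up to the $\eps_A$ term, or simply assume the analogous lower bound on $A_k$), we get $\sigma_{\min}\|c_k\| \leq \|A_k^T c_k\| \leq \mathcal{E}_{Ac}^I$ for those $x_k$ — wait, this does not match the stated RHS either.

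Looking at the claimed bound $\|c(x_k)\| \leq \frac{\mE_v(k') + \gamma_{k'}}{\sigma_{\min}} + \eps_c$, I see the intended argument must peel off the noise terms \emph{before} dividing by $\sigma_{\min}$. So the correct route is: for $x_k \in C_{Ac}^I(k')$, use the chain of inequalities in \eqref{tAc} in reverse — that is, $\|A_k^T c_k\| \leq \mathcal{E}_{Ac}^I$ together with the triangle-inequality expansion $\|\tilde A_k^T \tilde c_k\| \leq \|A_k^T c_k\| + \eps_A M_c + \eps_c M_A + \eps_A\eps_c$ gives $\|\tilde A_k^T \tilde c_k\| \leq \mathcal{E}_{Ac}^I + \eps_A M_c + \eps_c M_A + \eps_A\eps_c$. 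Hmm, that doubles the noise terms, which still does not land on $\mE_v(k') + \gamma_{k'}$. The resolution is that \cref{prop1} actually says the iterate visits $C_{Ac}^I(k')$, whose \emph{defining inequality} on the noiseless $\|A_k^T c_k\|$ was obtained precisely so that the noisy quantity satisfies $\|\tilde A_k^T \tilde c_k\| \le \mE_v(k') + \gamma_{k'}$ — indeed that is the content of the computation \eqref{tAc}, which shows $\|A_k^Tc_k\| > \mathcal{E}_{Ac}^I \Rightarrow \|\tilde A_k^T\tilde c_k\| > \mE_v(k')+\gamma_{k'}$. Contrapositively, $x_k \in C_{Ac}^I(k')$ (so $\|A_k^Tc_k\| \le \mathcal{E}_{Ac}^I$) does \emph{not} immediately give the noisy bound, but running the same triangle-inequality expansion \eqref{tAc} the other way does: $\|\tilde A_k^T \tilde c_k\| \le \|A_k^Tc_k\| + (\eps_AM_c+\eps_cM_A+\eps_A\eps_c) \le \mathcal{E}_{Ac}^I + (\eps_AM_c+\eps_cM_A+\eps_A\eps_c)$. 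Therefore the honest proof should apply Assumption~4 to $\tilde A_k$: $\sigma_{\min}\|\tilde c_k\| \le \|\tilde A_k^T\tilde c_k\|$, then $\|c_k\| \le \|\tilde c_k\| + \|\delta_c(x_k)\| \le \frac{\|\tilde A_k^T\tilde c_k\|}{\sigma_{\min}} + \eps_c$.

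So here is the plan I would write up, and I would flag the main obstacle as the bookkeeping of which quantity (noisy vs.\ noiseless) each bound applies to. \textbf{Step 1:} Take any $x_k$ in the subsequence contained in $C_{Ac}^I(k')$; by \cref{prop1} such iterates exist (infinitely often). \textbf{Step 2:} From the definition \eqref{EAcI} of $C_{Ac}^I(k')$ we have $\|A(x_k)^Tc(x_k)\| \le \mathcal{E}_{Ac}^I$; combine with the reverse of the triangle-inequality chain in \eqref{tAc} to bound $\|\tilde A_k^T \tilde c_k\|$ — however, to match the stated constant exactly, I believe the intended statement uses the \emph{tighter} observation that the argument proving \cref{prop1} in fact guarantees $\|\tilde A_k^T\tilde c_k\| \le \mE_v(k')+\gamma_{k'}$ on the returning subsequence (this is precisely the quantity forced below threshold in the contradiction step of \cref{prop1}'s proof), so I would cite that. \textbf{Step 3:} Apply Assumption~4 to $\tilde A_k$: since its smallest singular value is at least $\sigma_{\min}$ and (with $m \le n$) $A^T$ has full column rank, $\|\tilde A_k^T \tilde c_k\| \ge \sigma_{\min}\|\tilde c_k\|$, hence $\|\tilde c_k\| \le \frac{\mE_v(k')+\gamma_{k'}}{\sigma_{\min}}$. \textbf{Step 4:} Conclude with $\|c(x_k)\| \le \|\tilde c_k\| + \|\delta_c(x_k)\| \le \frac{\mE_v(k')+\gamma_{k'}}{\sigma_{\min}} + \eps_c$, using Assumption~3. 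The main obstacle, and the thing worth being careful about, is Step~2: making sure the constant that survives the passage through the critical-region definition and the noise expansion is exactly $\mE_v(k')+\gamma_{k'}$ and not $\mathcal{E}_{Ac}^I$ — i.e.\ recognizing that the extra noise terms $\eps_AM_c+\eps_cM_A+\eps_A\eps_c$ in $\mathcal{E}_{Ac}^I$ were inserted into the region's \emph{noiseless} defining inequality precisely so they cancel when one returns to the noisy product $\|\tilde A_k^T\tilde c_k\|$.
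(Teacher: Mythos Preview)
Your proposal is correct and follows essentially the same route as the paper: bound $\|\tilde A_k^T\tilde c_k\|$ by $\mE_v(k')+\gamma_{k'}$ on the subsequence in $C_{Ac}^I(k')$, divide by $\sigma_{\min}$ using Assumption~4 to bound $\|\tilde c_k\|$, then add $\eps_c$ via Assumption~3 to reach $\|c(x_k)\|$. In fact you are more careful than the paper about Step~2: the paper writes ``iterates outside $C_{Ac}^I(k')$ satisfy $\|\tilde A_k^T\tilde c_k\|\ge \mE_v(k')+\gamma_{k'}$, therefore iterates inside satisfy the reverse inequality,'' which as you note is a converse rather than a contrapositive; your observation that the bound on $\|\tilde A_k^T\tilde c_k\|$ is really what is forced by the contradiction argument in \cref{prop1} is the right way to justify that step.
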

\begin{proof}
    As argued in \eqref{tAc}, all iterates outside the set $C_{Ac}^I(k')$ must satisfy
    \be  \| \tA_k^T  \tc_k\| \geq \mE_v(k') + \gamma_{k'}, \ee
    and therefore for the infinite sequence of iterates in $C_{Ac}^I(k')$ we have
    \be  \| \tA_k^T  \tc_k\| < \mE_v(k') + \gamma_{k'}. \ee
    By Assumption~4, $\|\tA_k\|\geq \sigma_{\min}$,  and thus 
    \be  \|  \tc_k\| < \frac{\mE_v(k) + \gamma_{k'}}{\sigma_{\min}}. \ee
    We conclude the proof by recalling \eqref{noisebBO}.
\end{proof}

\subsection{Reduction in the Optimality Measure}

{We now study the contribution of the tangential step defined in subproblem \eqref{hprob_exact}. Having computed the normal step $\nu_k$, we write the total step of the algorithm as $p_k = v_k + h_k$, where $h_k$ is to be determined. As already mentioned $v_k$ is in the range space of $\tA_k$, so we require that $h_k$ be in the null space of $\tA_k^T$. Substituting $p_k= v_k +h$ in \eqref{hprob_exact} and ignoring constant terms involving $v_k$, we define obtain the following subproblem:} 

\begin{align}\label{hprob_h}
\min_h\quad  &(\tg_{k}+ \tW_k v_k)^{T} h+\frac{1}{2} h^{T}  \tW_k h\\
 \text{ subject to }\quad &\|h \|\leq \sqrt{\Delta_k^2 - \|v_k\|^2},
\end{align}
where the last inequality follows from the orthogonality of $h$ and $v_k$.
Let $\tZ_k$ be an orthonormal basis
for the null space of $\tA_k^T$. Thus 
\be h_k = \tZ_k d_k, \ee
for some vector $d_k$,
and we can rewrite \eqref{hprob_h} as the reduced tangential problem:

\begin{align}
\min_d \quad  &(\tg_{k}+ \tW_k v_k)^{T} \tZ_k d+\frac{1}{2} d^{T} \left[\tZ_k^T \tW_k \tZ_k\right] d \label{hprob}\\
 \text{ subject to }\quad &\|\tZ_k d \|\leq \sqrt{\Delta_k^2 - \|v_k\|^2} .\nonumber
\end{align}
{In summary, the full step of the algorithm is expressed as}
\benn
    p_k = v_k +\tZ_k d_k
        = v_k + h_k.
\eenn



{To commence the analysis of $h_k$}, we  define the tangential predicted reduction $\hpred_k$ produced by the step $h_k= \tZ_k d_k$ as the change in the objective function in 
\eqref{hprob}

\be \hpred_k(p_k) =-(\tg_{k}+\tW_k v_k)^{T} \tZ_k d_k-\frac{1}{2} d_k^{T} \left[\tZ_k^T \tW_k \tZ_k\right] d_k \ee 
\be = -(\tg_{k}+\tW_k v_k)^{T} h_k-\frac{1}{2} h_k^{T} \tW_k h_k .\ee

Having defined $\hpred_k$, $\pred_k$ and $\vpred_k$, we have from \eqref{model}-\eqref{vpred}
\be \label{pred_vpred_hpred}
\begin{split}
    \pred_k 
    & = m_k(0)- m_k(p_k)\\
    &=  -p_k^{T} \tilde g_{k}-\tfrac{1}{2} p_k^{T} \tilde W_{k} p_k+\nu_k\left( \|\tilde c_k\|-\left\|\tilde A_{k}^{T} p_k+\tilde c_{k}\right\|\right) \\
    &= -(v_k+h_k)^{T} \tilde g_{k}-\tfrac{1}{2} (v_k+h_k)^{T} \tilde W_{k} (v_k+h_k)+\nu_k \vpred_k\\
    &= \nu_k \vpred_k + \hpred_k - \tilde g_k^T v_k - \frac12 v_k^T \tilde W_k v_k.
\end{split}
\ee
It follows from  \eqref{vprobtr}  that
\be \label{passing} \sqrt{\Delta_k^2 - \|v_k\|^2} \geq (1-\zeta)\Delta_k.\ee
Applying the  Cauchy decrease condition \cite{mybook,conn2000trust}  to problem \eqref{hprob}, we obtain the following result.
\begin{lemma}[Tangential Problem Cauchy Decrease Condition]
The step $p_k$ 
computed by Algorithm~\ref{algorithmBO} satisfies
\be \hpred_k(p_k) \geq \frac12\| (\tg_{k}+\tW_k v_k)^T\tZ_k \| \min\left( (1 - \zeta) \Delta_k,\frac{\| (\tg_{k}+\tW_k v_k)^T\tZ_k \|}{\| 
\tW_k \|}\right).\label{hpred_cauchy}\ee
\end{lemma}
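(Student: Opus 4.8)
The plan is to recognize \eqref{hpred_cauchy} as the classical Cauchy decrease bound for the trust-region subproblem \eqref{hprob}, transported back through the orthonormal change of variables $h=\tZ_k d$. First I would introduce the shorthand $\bar g_k := \tZ_k^T(\tg_k+\tW_k v_k)$ and $\bar W_k := \tZ_k^T\tW_k\tZ_k$, so that \eqref{hprob} reads $\min_d\ \bar g_k^T d + \tfrac12 d^T\bar W_k d$ subject to $\|\tZ_k d\|\le\bar\Delta_k := \sqrt{\Delta_k^2-\|v_k\|^2}$. Since $\tZ_k$ has orthonormal columns, $\|\tZ_k d\|=\|d\|$, so the feasible set is the ball $\{\|d\|\le\bar\Delta_k\}$, and $\hpred_k = -\bigl(\bar g_k^T d_k + \tfrac12 d_k^T\bar W_k d_k\bigr)$, where $d_k$ is a global (or at least Cauchy-decrease-achieving) minimizer.

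Next I would perform the standard one-dimensional minimization along the steepest-descent ray $d(t)=-t\,\bar g_k$, $t\ge0$, subject to $t\|\bar g_k\|\le\bar\Delta_k$. With $q(t) = -t\|\bar g_k\|^2 + \tfrac12 t^2\,\bar g_k^T\bar W_k\bar g_k$, I split into cases: if $\bar g_k^T\bar W_k\bar g_k\le0$ the parabola is non-increasing and the constraint binds, giving $-q\ge\bar\Delta_k\|\bar g_k\|$; if $\bar g_k^T\bar W_k\bar g_k>0$ the unconstrained optimal step is $t^\star=\|\bar g_k\|^2/(\bar g_k^T\bar W_k\bar g_k)$, and one obtains either $-q\ge\tfrac12\|\bar g_k\|^4/(\bar g_k^T\bar W_k\bar g_k)\ge\tfrac12\|\bar g_k\|^2/\|\tW_k\|$ when $t^\star\|\bar g_k\|\le\bar\Delta_k$ (using $\bar g_k^T\bar W_k\bar g_k\le\|\bar W_k\|\,\|\bar g_k\|^2\le\|\tW_k\|\,\|\bar g_k\|^2$, since $\|\bar W_k\|=\|\tZ_k^T\tW_k\tZ_k\|\le\|\tW_k\|$), or $-q\ge\tfrac12\bar\Delta_k\|\bar g_k\|$ when the constraint binds first. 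In every case $-q\ge\tfrac12\|\bar g_k\|\min\bigl(\bar\Delta_k,\ \|\bar g_k\|/\|\tW_k\|\bigr)$, and since $d_k$ attains at least this decrease, $\hpred_k\ge\tfrac12\|\bar g_k\|\min\bigl(\bar\Delta_k,\ \|\bar g_k\|/\|\tW_k\|\bigr)$.

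Finally I would substitute back $\|\bar g_k\|=\|(\tg_k+\tW_k v_k)^T\tZ_k\|$ and invoke \eqref{passing}, namely $\bar\Delta_k=\sqrt{\Delta_k^2-\|v_k\|^2}\ge(1-\zeta)\Delta_k$, to replace $\bar\Delta_k$ by $(1-\zeta)\Delta_k$ inside the minimum (this only weakens the bound), which yields exactly \eqref{hpred_cauchy}. I do not anticipate a genuine obstacle: the argument is a textbook Cauchy decrease estimate, and the only points needing a little care are the curvature/constraint case split that produces the factor $\tfrac12$, the inequality $\|\tZ_k^T\tW_k\tZ_k\|\le\|\tW_k\|$ for $\tZ_k$ with orthonormal columns, and the use of \eqref{passing} to pass from the reduced radius $\bar\Delta_k$ to $(1-\zeta)\Delta_k$.
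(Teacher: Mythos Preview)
Your proposal is correct and matches the paper's approach: the paper does not give a proof but simply states that the bound follows by applying the classical Cauchy decrease condition (citing \cite{mybook,conn2000trust}) to the reduced subproblem \eqref{hprob} together with \eqref{passing}, which is exactly what you outline. The only ingredients are the standard one-dimensional steepest-descent argument, the orthonormality of $\tZ_k$ (giving $\|\tZ_k d\|=\|d\|$ and $\|\tZ_k^T\tW_k\tZ_k\|\le\|\tW_k\|$), and the replacement of $\sqrt{\Delta_k^2-\|v_k\|^2}$ by $(1-\zeta)\Delta_k$ via \eqref{passing}.
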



Next, we prove a technical lemma relating the length of the normal step and the predicted feasibility reduction $\vpred_k$.

\begin{lemma}
\label{vk_vs_vpred}
Suppose that Assumptions~2 and 4 hold. 
Then, 
\be \| v_k \| \leq  \Gamma_1 \vpred_k, \label{asm11}\ee
where
\be \Gamma_1 := \frac{2}{\sigma_{\min} \min(1,\kappa^{-2}/2)}\quad\mbox{and}\quad \kappa:= \frac{\sigma_{\max}}{\sigma_{\min}}. \label{ga1def} \ee
\end{lemma}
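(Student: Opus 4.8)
The plan is to relate $\|v_k\|$ to $\vpred_k = \|\tc_k\| - \|\tA_k v_k + \tc_k\|$ by exploiting that $v_k$ solves the trust-region-constrained least-squares problem \eqref{vprob}-\eqref{vprobtr}, and that under Assumption~4 the Jacobian $\tA_k$ is uniformly well-conditioned. The key observation is that $\vpred_k$ is exactly the Cauchy-type decrease achievable for problem \eqref{vprob}, so a Cauchy decrease bound gives a lower bound on $\vpred_k$ in terms of $\|\tA_k^T \tc_k\|$ and $\Delta_k$; separately, the optimality of $v_k$ combined with $\sigma_{\min} > 0$ will give an upper bound on $\|v_k\|$ in terms of the same quantities. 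Dividing the two bounds should produce the constant $\Gamma_1$.

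First I would record the Cauchy decrease bound for the normal problem (this is essentially \eqref{lem1_res} from Lemma~1, written as $\vpred_k \geq \tfrac{\|\tA_k^T\tc_k\|}{2\|\tc_k\|}\min(\zeta\Delta_k, \|\tA_k^T\tc_k\|/\|\tA_k^T\tA_k\|)$). Under Assumption~4 we have $\|\tA_k^T\tc_k\| \geq \sigma_{\min}\|\tc_k\|$ (since $\tc_k$ need not lie in the range of $\tA_k^T$, one should be a little careful — but actually one can bound $\|\tA_k^T\tc_k\|$ below using the component of $\tc_k$ in the row space; if $\tc_k \in \mathbb{R}^m$ and $\tA_k$ has full row rank $m$, then $\|\tA_k^T \tc_k\| \geq \sigma_{\min}\|\tc_k\|$ holds directly). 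Also $\|\tA_k^T\tA_k\| = \sigma_{\max}^2$. Substituting, $\vpred_k \geq \tfrac{\sigma_{\min}}{2}\min\left(\zeta\Delta_k, \tfrac{\sigma_{\min}\|\tc_k\|}{\sigma_{\max}^2}\right) = \tfrac{\sigma_{\min}}{2}\min\left(\zeta\Delta_k, \kappa^{-2}\tfrac{\|\tc_k\|}{\sigma_{\min}}\cdot\sigma_{\min}\right)$, which I would massage into the form $\tfrac{\sigma_{\min}}{2}\min(1,\kappa^{-2})$ times $\min(\zeta\Delta_k, \|\tc_k\|/\sigma_{\min})$-type expression.

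Next I would bound $\|v_k\|$ from above. Because $v_k$ is optimal for \eqref{vprob}, consider the feasible competitor $\hat v = -t\,\tA_k^+\tc_k$ (or a Cauchy point along the steepest-descent direction for $\|\tA_k v + \tc_k\|^2$) for suitable $t$; optimality of $v_k$ forces $\|\tA_k v_k + \tc_k\| \leq \|\tA_k\hat v + \tc_k\|$, and hence $\vpred_k$ is at least the reduction achieved by $\hat v$. Then I would argue that $\|v_k\|$ cannot be much larger than the norm of the minimum-norm solution $-\tA_k^+\tc_k$, whose norm is at most $\|\tc_k\|/\sigma_{\min}$: indeed, if $v_k$ were larger than the trust radius permits it would be truncated to $\|v_k\| \leq \zeta\Delta_k$, and otherwise the unconstrained minimizer is $v_k = -\tA_k^+\tc_k$ with $\|v_k\|\leq \|\tc_k\|/\sigma_{\min}$. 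Either way $\|v_k\| \leq \min(\zeta\Delta_k, \|\tc_k\|/\sigma_{\min})\cdot(\text{const})$, and combining with the lower bound on $\vpred_k$ gives $\|v_k\| \leq \tfrac{2}{\sigma_{\min}\min(1,\kappa^{-2})}\,\vpred_k$, matching $\Gamma_1$ up to the factor-of-two in the definition \eqref{ga1def}.

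The main obstacle I expect is handling the two regimes of the $\min$ cleanly and simultaneously — i.e., making sure the \emph{same} bound $\|v_k\|\leq \Gamma_1\vpred_k$ holds whether the normal step is trust-region-active ($\|v_k\|\approx\zeta\Delta_k$) or interior ($v_k = -\tA_k^+\tc_k$). In the active case one needs $\vpred_k$ to scale linearly with $\Delta_k$ (so that the quadratic-vs-linear mismatch in a Cauchy bound doesn't spoil the ratio), which is exactly why the $\kappa^{-2}/2$ and the $\min(1,\cdot)$ appear in $\Gamma_1$; in the interior case one needs that the minimum-norm solution genuinely realizes a $\vpred_k$ proportional to $\|\tc_k\|$. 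A secondary subtlety is justifying $\|\tA_k^T\tc_k\|\geq\sigma_{\min}\|\tc_k\|$ when $\tA_k$ is $m\times n$ with $m\le n$ and full row rank — this is fine since full row rank means $\sigma_{\min}(\tA_k)$ is the smallest of the $m$ nonzero singular values and $\tc_k\in\mathbb{R}^m$ lies entirely in the column space of $\tA_k$, so no projection is lost.
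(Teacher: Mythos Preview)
Your plan is correct and follows essentially the same route as the paper: use the Cauchy lower bound on $\vpred_k$ together with Assumption~4 to get $\vpred_k \geq \tfrac{\sigma_{\min}}{2}\min(\zeta\Delta_k,\,\sigma_{\min}\|\tc_k\|/\sigma_{\max}^2)$, then split into two regimes and bound $\|v_k\|$ in each. The only technical difference is that in the ``interior'' regime the paper does not invoke the pseudoinverse $-\tA_k^+\tc_k$ directly; instead it expands $\|\tc_k\|^2 \geq \|\tc_k + \tA_k v_k\|^2$ to get $\|\tA_k v_k\|\leq 2\|\tc_k\|$ and then uses $v_k\in\mathrm{range}(\tA_k^T)$ to conclude $\|v_k\|\leq 2\|\tc_k\|/\sigma_{\min}$ --- this extra factor of~2 is exactly the source of the $\kappa^{-2}/2$ (rather than $\kappa^{-2}$) in~$\Gamma_1$ that you noticed.
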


\begin{proof}
Recalling the Cauchy decrease condition \eqref{lem1_res} we have 
\be \label{vpred_com}
\begin{split}
\vpred_k(p_k) 
		      &\geq \frac{\| \tilde A_k ^T  \tilde c_k \|}{2\| \tilde c_k \|}\min\left(\zeta\Delta_k,\frac{\| \tilde A_k ^T  \tilde c_k \|}{\| \tilde A_k ^T \tilde A_k \|}\right)\\
		      &\geq \frac{\sigma_{\min}}{2} \min\left(\zeta\Delta_k,\frac{\sigma_{\min}\| \tilde c_k \|}{\sigma_{\max}^2 }\right).
\end{split}
\ee
First consider the case where 
\benn \| \tilde c_k \|\geq \tfrac\zeta2 \sigma_{\min} \Delta_k. \eenn
By \eqref{vprobtr} we have 
\be \label{vpred_con1}
\begin{split}
\vpred_k(p_k) 
                       &\geq \frac{\sigma_{\min}}{2} \min\left(\zeta\Delta_k,\frac{\zeta\sigma^2_{\min}}{2\sigma_{\max}^2 } \Delta_k\right)\\
                       &=\frac{\sigma_{\min} \zeta \Delta_k}{2}\min(1,\kappa^{-2}/2) \\
                       &\geq \frac{\sigma_{\min}  }{2}\min(1,\kappa^{-2}/2) \|v_k\|.
\end{split}
\ee
On the other hand, if 
\benn \| \tilde c_k \|< \tfrac\zeta2 \sigma_{\min} \Delta_k \ \Longrightarrow \ 
 \zeta  \Delta_k > \frac{2}{\sigma_{\min}} \| \tilde c_k \|.  \eenn
Substituting in \eqref{vpred_com} we obtain
\be \label{vpred_com2}
\begin{split}
\vpred_k(p_k) &\geq \frac{\sigma_{\min}}{2} \min\left(\zeta\Delta_k,\frac{\sigma_{\min}\| \tilde c_k \|}{\sigma_{\max}^2 }\right)\\
                       &\geq \frac{\sigma_{\min}}{2} \min\left( \frac{2}{\sigma_{\min}} \| \tilde c_k \| ,\frac{\sigma_{\min}\| \tilde c_k \|}{\sigma_{\max}^2 }\right)\\
                       &= \| \tilde c_k \|\min\left(1, \kappa^{-2}/2\right).
\end{split}
\ee
Now, since $v_k$ solves the normal subproblem \eqref{vprob}, 
\benn 
\| \tilde c_k \|^2  \geq \| \tilde c_k + \tilde A_k ^T v_k\|^2 = \| \tilde c_k \|^2 + 2  \tilde c_k ^T  \tilde A_k ^T v_k + \| \tilde A_k ^T v_k\|^2 ,
\eenn
so that
\benn 
-2  \tilde c_k ^T  \tilde A_k ^T v_k \geq\| \tilde A_k ^T v_k\|^2 ,
\eenn
and by the Cauchy-Schwarz inequality we obtain
\be \| \tilde A_k ^Tv_k\| \leq 2\| \tilde c_k \|.\ee
Using this in \eqref{vpred_com2} and obtain
\be\label{vpred_con2}
\begin{split}
\vpred_k(p_k) &\geq \| \tilde c_k \|\min\left(1, \kappa^{-2}/2\right)\\
		      &\geq \frac12\|{ \tilde A_k ^T v_k}\|\min\left(1, \kappa^{-2}/2\right)\\
		      &\geq \frac{\sigma_{\min}}{2}\min\left(1, \kappa^{-2}/2\right) \|v_k\| .
\end{split}
\ee
We conclude the proof by \eqref{vpred_con1} and \eqref{vpred_con2}.
\end{proof}
We can now show that the sequence $\{\nu_k\}$ is bounded.


\begin{lemma}
\label{pred_hred}
{
Let Assumptions 1 through 4 be satisfied.  Then, the sequence $\{\nu_k\}$ is bounded and thus there is an integer $k''$ such that, for all $k \ge k''$, $\nu_k$ takes a constant value $\nu_{k''}$. This constant satisfies
\be
\nu_{k''} \leq   \frac{\tau \Gamma_1}{1-{\pi_1}} \left(M_g + \frac{M_W M_c \Gamma_1}{2}\right) := \bar\nu, \label{bar_nu}\ee
where $\Gamma_1$ is defined in \eqref{ga1def}.}
Moreover,
\be \pred_k \geq \Gamma_2 \hpred_k, \label{gam2}\ee
where
\be \Gamma_2 =\left[ 1+\left(M_g +\frac{M_W M_c \Gamma_1}2 \right) \frac{\Gamma_1}{{\pi_1}  \nu_0}\right]^{-1}. \label{def_gamma2} \ee

\end{lemma}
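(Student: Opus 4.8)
The plan is to first establish the relation \eqref{gam2} with a constant that is slightly weaker (involving $\nu_k$ in place of $\nu_0$), use this to bound how much the penalty parameter update in lines 12--15 of Algorithm~\ref{algorithmBO} can force $\nu_k$ to grow, and then re-derive the final clean bound \eqref{gam2} with $\nu_0$ in the denominator once boundedness is in hand. The starting point is the decomposition \eqref{pred_vpred_hpred}, namely $\pred_k = \nu_k \vpred_k + \hpred_k - \tilde g_k^T v_k - \tfrac12 v_k^T \tilde W_k v_k$. Using Cauchy--Schwarz, Assumption~2 (the bounds $M_W$, $M_c$), and the bound $\|v_k\| \leq \zeta\Delta_k$ together with $\|v_k\| \leq \Gamma_1 \vpred_k$ from \cref{vk_vs_vpred}, I would estimate
\be
-\tilde g_k^T v_k - \tfrac12 v_k^T \tilde W_k v_k \geq -\left(M_g + \tfrac{M_W M_c \Gamma_1}{2}\right)\|v_k\| \geq -\left(M_g + \tfrac{M_W M_c \Gamma_1}{2}\right)\Gamma_1 \vpred_k,
\ee
where the middle step bounds one factor of $\|v_k\|$ by $\Gamma_1\vpred_k$ and leaves the $\tfrac12 v_k^T\tilde W_k v_k$ term handled by $\|v_k\|\le M_c\Gamma_1\vpred_k$-type reasoning (here I would be slightly careful about which factor is replaced). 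Hence $\pred_k \geq \left[\nu_k - \left(M_g + \tfrac{M_W M_c\Gamma_1}{2}\right)\Gamma_1\right]\vpred_k + \hpred_k$.

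Next I would analyze the penalty update loop. The parameter $\nu_k$ is increased only while $\pred_k \leq {\pi_1}\nu_k \vpred_k$. From the displayed bound, if $\nu_k - (M_g + \tfrac{M_W M_c\Gamma_1}{2})\Gamma_1 \geq {\pi_1}\nu_k$, i.e. if $\nu_k \geq \tfrac{1}{1-{\pi_1}}(M_g + \tfrac{M_W M_c\Gamma_1}{2})\Gamma_1$, then $\pred_k \geq {\pi_1}\nu_k\vpred_k + \hpred_k \geq {\pi_1}\nu_k\vpred_k$ (using $\hpred_k \geq 0$, which follows from \cref{hpred_cauchy}), so the loop condition fails and no further increase occurs. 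Since each pass through the loop multiplies $\nu_k$ by $\tau$, the value can overshoot this threshold by at most a factor $\tau$, giving $\nu_k \leq \tfrac{\tau}{1-{\pi_1}}(M_g + \tfrac{M_W M_c\Gamma_1}{2})\Gamma_1 = \bar\nu$ for all $k$; this is exactly \eqref{bar_nu}. Because $\{\nu_k\}$ is nondecreasing, bounded, and increases only by discrete factors of $\tau$, it is eventually constant, equal to some $\nu_{k''}$ for $k \geq k''$.

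Finally, for \eqref{gam2} I return to $\pred_k \geq \left[\nu_k - (M_g + \tfrac{M_W M_c\Gamma_1}{2})\Gamma_1\right]\vpred_k + \hpred_k$ and use $\vpred_k \geq 0$ together with the bound $\hpred_k$ in terms of $\vpred_k$: from $\|v_k\|\leq \Gamma_1\vpred_k$ and $\|h_k\|^2 = \Delta_k^2 - \|v_k\|^2$, I would bound $\hpred_k$ above in terms of $\vpred_k$ — more precisely, combine the Cauchy-decrease lower bound on $\pred_k$ in terms of $\nu_k\vpred_k$ with the identity \eqref{pred_vpred_hpred} to isolate $\hpred_k \leq \pred_k - \nu_k\vpred_k + (M_g + \tfrac{M_W M_c\Gamma_1}{2})\Gamma_1\vpred_k$, and then invoke the penalty rule \eqref{merit_rule} giving $\vpred_k < \pred_k/({\pi_1}\nu_k) \leq \pred_k/({\pi_1}\nu_0)$. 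Substituting yields $\pred_k \geq \hpred_k - (M_g + \tfrac{M_W M_c\Gamma_1}{2})\Gamma_1 \cdot \pred_k/({\pi_1}\nu_0)$, which rearranges to $\pred_k\bigl[1 + (M_g + \tfrac{M_W M_c\Gamma_1}{2})\tfrac{\Gamma_1}{{\pi_1}\nu_0}\bigr] \geq \hpred_k$, i.e. \eqref{gam2} with $\Gamma_2$ as in \eqref{def_gamma2}.

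The main obstacle I anticipate is bookkeeping the two factors of $\|v_k\|$ in the quadratic term $\tfrac12 v_k^T\tilde W_k v_k$ correctly: one needs to decide which factor to replace by $\Gamma_1\vpred_k$ and which by the cruder bound (e.g. $\|v_k\|\leq M_c\Gamma_1$ via $\|\tilde A_k^T v_k\|\leq 2\|\tilde c_k\|$ reasoning from the previous lemma, or $\|v_k\|\le\zeta\Delta_k$), and to confirm the resulting constant matches $(M_g + \tfrac{M_W M_c\Gamma_1}{2})$ exactly as stated. A secondary subtlety is justifying $\hpred_k \geq 0$ cleanly from \cref{hpred_cauchy} and making sure the penalty-loop termination argument correctly accounts for the $\hpred_k$ term being nonnegative rather than discarding it prematurely.
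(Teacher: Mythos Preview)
Your proposal is correct and follows essentially the same route as the paper: decompose $\pred_k$ via \eqref{pred_vpred_hpred}, bound the cross terms using $\|v_k\|\leq\Gamma_1\vpred_k$ to get $\pred_k\geq\nu_k\vpred_k+\hpred_k-(M_g+\tfrac{M_W M_c\Gamma_1}{2})\Gamma_1\vpred_k$, use $\hpred_k\geq 0$ to bound the penalty loop, and then drop $\nu_k\vpred_k\geq 0$ and substitute $\vpred_k\leq\pred_k/(\pi_1\nu_k)\leq\pred_k/(\pi_1\nu_0)$ from \eqref{merit_rule} to rearrange into \eqref{gam2}. Regarding your main anticipated obstacle, the paper's bookkeeping for the quadratic term is simply $\tfrac12 M_W\|v_k\|^2\leq\tfrac12 M_W(\Gamma_1\vpred_k)^2\leq\tfrac12 M_W\Gamma_1^2 M_c\,\vpred_k$, using $\vpred_k\leq\|\tilde c_k\|\leq M_c$ on one factor; your suggested alternative $\|v_k\|\leq\Gamma_1 M_c$ gives the same constant, and your detour about bounding $\hpred_k$ above is unnecessary since dropping the nonnegative $\nu_k\vpred_k$ term already yields $\pred_k\geq\hpred_k-(M_g+\tfrac{M_W M_c\Gamma_1}{2})\Gamma_1\vpred_k$ directly.
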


\begin{proof}
\textit{Part 1.} We  apply \cref{vk_vs_vpred}, and we have that by \cref{pred_vpred_hpred,asm11} and Assumption~2,
\be
\begin{split} 
\pred_k &= \nu_k \vpred_k + \hpred_k - \tilde g_k^T v_k - \frac12 v_k^T \tilde W_k v_k  \\
	     &\geq \nu_k \vpred_k + \hpred_k -\|\tilde g_k\| \|v_k \| - \frac12 \|v_k\|^2 \|\tilde W_k \|  \\
	     &\geq \nu_k \vpred_k + \hpred_k -M_g \|v_k \| - \frac12 \|v_k\|^2 M_W\\
	     &\geq \nu_k \vpred_k + \hpred_k -\left(M_g +\frac{M_W \Gamma_1\vpred_k}2 \right)  \Gamma_1\vpred_k\\
	     &\geq \nu_k \vpred_k + \hpred_k -\left(M_g +\frac{M_W M_c \Gamma_1}2 \right) \Gamma_1\vpred_k, \label{pred_coM_Wong}
 \end{split} 
\ee
where the last inequality follows from the fact that $\vpred_k \leq \|c_k\|$,  by definition \eqref{vpred}. 

Recall that $\nu_k$ is increased until
\be \label{pred_0}
\pred_k \geq {\pi_1}\nu_k \vpred_k .
\ee
By  \eqref{pred_coM_Wong} and the fact that $\hpred_k$ is non-negative, we have that \eqref{pred_0} is satisfied if
\be \label{pred_1}
\nu_k \vpred_k  -\left(M_g +\frac{M_W M_c \Gamma_1}2 \right) \Gamma_1\vpred_k
\geq {\pi_1}\nu_k \vpred_k
\ee
$i.e.$ if
\be \nu_k \geq \frac{ \Gamma_1}{1-{\pi_1}} \left(M_g +\frac{M_W M_c \Gamma_1}2 \right).\ee
Recalling that $\tau$ is the factor by which $\nu_k$ is increased, we conclude that the penalty parameter is never larger than (as defined in \cref{bar_nu}):
\be  
\bar\nu =: \frac{\tau \Gamma_1}{1-{\pi_1}} \left(M_g +\frac{M_W M_c \Gamma_1}2 \right).
\ee
The proof of the first part of the lemma is complete. 

\textit{Part 2.} For the second part of the theorem, we substitute \cref{pred_0} into \cref{pred_coM_Wong}:
\be
\begin{split} 
\pred_k &\geq \nu_k \vpred_k + \hpred_k -\left(M_g +\frac{M_W M_c \Gamma_1}2 \right) \Gamma_1\vpred_k \\
	    & \geq  \hpred_k -\left(M_g +\frac{M_W M_c \Gamma_1}2 \right) \Gamma_1\vpred_k \\
	    & \geq  \hpred_k -\left(M_g +\frac{M_W M_c \Gamma_1}2 \right) \frac{\Gamma_1}{{\pi_1} \nu_k}\pred_k.
\end{split} 
\ee
Re-arranging  
\be
\begin{split} 
\pred_k &\geq \left[ 1+\left(M_g +\frac{M_W M_c \Gamma_1}2 \right) \frac{\Gamma_1}{{\pi_1} \nu_k}\right]^{-1} \hpred_k \\
	     &\geq \left[ 1+\left(M_g +\frac{M_W M_c \Gamma_1}2 \right) \frac{\Gamma_1}{{\pi_1}  \nu_0}\right]^{-1} \hpred_k\\
        &=\Gamma_2 \hpred_k .
\end{split} \nonumber
\ee


\end{proof}



\noindent \textit{Remark 3. The Settling Iterate.} The integer $k''$ after which the penalty parameter is fixed (at a value no greater than $\bar \nu$) will be referred to as the \textit{settling iterate.} We emphasize the distinction between $k'$ and $k''$. The \textit{anchor iterate} $k'$  defined in Remark~1, can be chosen arbitrarily and determines the value of $\nu_{k'}$, which in turn defines the convergence regions. In contrast, $k''$ is significant only in that it exists, so that the merit function is a fixed function for sufficiently large $k$.
\medskip


We next show that when the merit parameter has stabilized and when the reduced gradient is  sufficiently large, the trust region radius cannot be decreased below a certain value. For ease of notation, we define a few quantities.
\be \Theta = \frac{ \pi_0\Gamma_2^2   (1-\zeta)^2 }{2\tau\xi  M_L(\bar\nu)};\quad \mathcal{E}_h = \frac{\xi }{  \Gamma_2 (1-\zeta)} (\eg + \bar\nu \eA);\quad {\bar\varepsilon} = \eps_f + \bar \nu \eps_c\label{defi1_h}.\ee
We also recall from \eqref{def_ML} that $M_L(\nu_k) = \max(L_f + M_W,\ \nu_k L_c)$. 


\begin{lemma}[Increase of the Trust Region in Tangential Problem] 
\label{tr-lb-hprob}
Suppose that for an iterate $k$ and a given positive constant ${\hat\gamma}$,
\be \| (\tg_k +\tW_k v_k )^{T} \tZ_k \| > \mathcal{E}_h  + {\hat\gamma} ,\label{asmp_h_del}\ee
where $\Gamma_2$ is given in \eqref{def_gamma2}. Define
{\be\label{def_hat_del} \hat\Delta ({\hat\gamma}) = \frac{ \Gamma_2   (1-\zeta) }{\xi  M_L(\bar\nu)}{\hat\gamma}.\ee }
Then,
\be \min\left( \hat \Delta ({\hat\gamma}),\frac{\| (\tg_{k}+\tW_k v_k)^T\tZ_k \|}{\| \tW_k \|}\right)  = \hat \Delta ({\hat\gamma}). \label{min_delta_bar_h}\ee
Furthermore, if $\Delta_k \leq \hat\Delta({\hat\gamma})$, the step $p_k$ is accepted and
\be \Delta_{k+1} =\tau \Delta_k. \ee

\end{lemma}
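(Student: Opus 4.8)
The statement implicitly presupposes Assumptions~1--4 (it uses $\bar\nu$ and $\Gamma_2$ from \cref{pred_hred}). My plan is to follow the two-part template of the proof of \cref{tr-lbBO}, substituting the tangential ingredients for the feasibility ones: the Cauchy-decrease bound \eqref{hpred_cauchy} plays the role of \eqref{lem1_res}, the inequality $\pred_k \ge \Gamma_2 \hpred_k$ from \cref{pred_hred} transfers a lower bound on $\hpred_k$ to $\pred_k$, the boundedness $\nu_k \le \bar\nu$ (also \cref{pred_hred}) lets me replace $M_L(\nu_k)$ by $M_L(\bar\nu)$, and the model-accuracy bound \eqref{lem2_res} controls $|\ared_k-\pred_k|$. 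Throughout I would keep the noise written as $\ef+\nu_k\ec$ rather than inflating it to $\bar\varepsilon$, since the term $\xi(\ef+\nu_k\ec)$ appears identically in the numerator and denominator of $\rho_k$ and must cancel.

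\emph{Part 1} (establishing \eqref{min_delta_bar_h}). It suffices to show $\hat\Delta(\hat\gamma) < \|(\tg_k+\tW_k v_k)^T\tZ_k\|/\|\tW_k\|$. Since $\Gamma_2 \le 1$ (it is the reciprocal of $1$ plus a nonnegative quantity, by \eqref{def_gamma2}), $1-\zeta<1$, $\xi = 2/(1-\pi_0)>1$, and $M_L(\bar\nu)=\max(L_f+M_W,\bar\nu L_c)\ge M_W$, the definition \eqref{def_hat_del} gives $\hat\Delta(\hat\gamma)<\hat\gamma/M_W$. By \eqref{asmp_h_del}, $\|(\tg_k+\tW_k v_k)^T\tZ_k\|>\mathcal{E}_h+\hat\gamma\ge\hat\gamma$, and by Assumption~2, $\|\tW_k\|\le M_W$; chaining these yields $\hat\Delta(\hat\gamma)<\hat\gamma/M_W\le\hat\gamma/\|\tW_k\|<\|(\tg_k+\tW_k v_k)^T\tZ_k\|/\|\tW_k\|$, which is Part~1.

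\emph{Part 2} (step acceptance). Assuming $\Delta_k\le\hat\Delta(\hat\gamma)$, Part~1 gives $(1-\zeta)\Delta_k\le\Delta_k\le\hat\Delta(\hat\gamma)<\|(\tg_k+\tW_k v_k)^T\tZ_k\|/\|\tW_k\|$, so the $\min$ in \eqref{hpred_cauchy} equals $(1-\zeta)\Delta_k$; hence $\hpred_k\ge\tfrac12(1-\zeta)\|(\tg_k+\tW_k v_k)^T\tZ_k\|\Delta_k\ge0$, and by \eqref{gam2} $\pred_k\ge\tfrac{\Gamma_2(1-\zeta)}{2}\|(\tg_k+\tW_k v_k)^T\tZ_k\|\Delta_k>0$. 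Because $\pred_k>0$ I may drop the absolute value in the denominator and write $|\rho_k-1|=|\ared_k-\pred_k|/(\pred_k+\xi(\ef+\nu_k\ec))$, with $\rho_k$ as in \eqref{rhodef}. I would then bound the numerator from above by \eqref{lem2_res}, using $M_L(\nu_k)\le M_L(\bar\nu)$ together with $M_L(\bar\nu)\Delta_k\le M_L(\bar\nu)\hat\Delta(\hat\gamma)=\tfrac{\Gamma_2(1-\zeta)}{\xi}\hat\gamma$ to reduce the quadratic term to $\tfrac{\Gamma_2(1-\zeta)}{\xi}\hat\gamma\,\Delta_k$, and bound the denominator from below by splitting \eqref{asmp_h_del}: $\|(\tg_k+\tW_k v_k)^T\tZ_k\|>\mathcal{E}_h+\hat\gamma=\tfrac{\xi}{\Gamma_2(1-\zeta)}(\eg+\bar\nu\eA)+\hat\gamma\ge\tfrac{\xi}{\Gamma_2(1-\zeta)}(\eg+\nu_k\eA)+\hat\gamma$, so that $\tfrac{\Gamma_2(1-\zeta)}{2}\|(\tg_k+\tW_k v_k)^T\tZ_k\|>\tfrac{\xi}{2}(\eg+\nu_k\eA)+\tfrac{\Gamma_2(1-\zeta)}{2}\hat\gamma$. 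Multiplying the resulting denominator lower bound by $1-\pi_0=2/\xi$ reproduces the numerator upper bound term by term, whence $|\rho_k-1|<1-\pi_0$, i.e. $\rho_k>\pi_0$; the step-acceptance and expansion rule of Algorithm~\ref{algorithmBO} then gives $\Delta_{k+1}=\tau\Delta_k$.

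There is no genuinely hard step here — the argument is a standard deterministic trust-region one — so the anticipated difficulty is purely bookkeeping: invoking $\nu_k\le\bar\nu$ (which rests on Assumption~4 via \cref{pred_hred}) only where it is harmless, namely in the numerator and in lower-bounding $\|(\tg_k+\tW_k v_k)^T\tZ_k\|$ through $\mathcal{E}_h$, while leaving the $\ef+\nu_k\ec$ terms untouched; verifying $\hpred_k\ge0$ so that $\pred_k>0$ and the absolute value in the denominator of $\rho_k$ may be dropped; and noticing that the constants $\hat\Delta(\hat\gamma)$ in \eqref{def_hat_del} and $\mathcal{E}_h$ in \eqref{defi1_h} are calibrated precisely so that the final cancellation is an exact equality.
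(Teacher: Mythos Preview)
Your proposal is correct and follows essentially the same approach as the paper: both parts proceed by the same template as \cref{tr-lbBO}, replacing \eqref{lem1_res} and \eqref{merit_rule} by \eqref{hpred_cauchy} and \eqref{gam2}, and using $\nu_k\le\bar\nu$ to control $M_L(\nu_k)$ and the $\eA$-term while leaving $\ef+\nu_k\ec$ untouched so the final ratio collapses to $2/\xi=1-\pi_0$. The only cosmetic difference is that the paper inflates $\eg+\nu_k\eA$ to $\eg+\bar\nu\eA$ in the numerator before combining, whereas you equivalently deflate $\mathcal{E}_h$ in the denominator; your explicit justification that $\pred_k>0$ (so the absolute value may be dropped) is a point the paper glosses over.
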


\begin{proof}
 Note from \eqref{def_gamma2} that $\Gamma_2 < 1$.
By \eqref{def_hat_del}  and the definition of $\xi$ in line 3 of Algorithm~1, 
\be
 \hat\Delta ({\hat\gamma}) = \frac{ \Gamma_2   (1-\zeta) }{\xi  M_L(\bar\nu)}{\hat\gamma} 
 \leq \frac{{\hat\gamma}}{M_W}
 < \frac{\| (\tg_{k}+\tW_k v_k)^T\tZ_k \|}{\| \tW_k \|},
 \ee
 where the {first} inequality is obtained by dropping constants that are less than 1 and by definition of $M_L(\nu_k)$. Hence \eqref{min_delta_bar_h} holds.
 
 
Now, since  $\Delta_k \leq \hat\Delta({\hat\gamma})$ and  $1-\zeta < 1$, it follows that
  \be \min\left( (1-\zeta)\Delta_k,\frac{\| (\tg_{k}+\tW_k v_k)^T\tZ_k \|}{\| \tW_k \|}\right)  = (1-\zeta)\Delta_k. \label{smaller_delta_h}\ee
We also have
\be
\begin{split} 
M_L(\nu_k) \Delta_k + (\eg + \bar\nu\eA)
&\leq M_L(\nu_k) \hat\Delta({\hat\gamma}) + (\eg + \bar\nu\eA) \\
&\stackrel{\leq}{\mtiny{\nu_k \leq\bar\nu}} M_L(\bar\nu) \hat \Delta({\hat\gamma}) + (\eg + \bar\nu\eA) \\
&\stackrel{= }{\mtiny{\labelcref{def_hat_del}}} \frac{ \Gamma_2   (1-\zeta) }{\xi }{\hat\gamma} + (\eg + \bar\nu\eA) \\
&=  \frac{ \Gamma_2   (1-\zeta) }{\xi }\left[{\hat\gamma} + \frac{\xi}{\Gamma_2 (1-\zeta) }(\eg + \bar\nu\eA) \right]\\
&=  \frac{ \Gamma_2   (1-\zeta) }{\xi }\left[{\hat\gamma} + \mathcal{E}_h \right]\\
&< \frac{ \Gamma_2   (1-\zeta) }{\xi } \| (\tg_k +\tW_k v_k )^{T} \tZ_k \|.
\end{split}
\label{lem3_aux_h}
\ee
Using this bound and the definition of $\rho$, 
we obtain
\be
\begin{split}
|\rho_k-1| &= \frac{|\pred_k(p_k)-\ared_k(p_k)|}{|\pred_k(p_k) + \xi(\ef+\nu_k  \ec) |}\\ 
& \stackrel{\leq}{\mtiny{ \labelcref{gam2}}} \frac{|\pred_k(p_k)-\ared_k(p_k)|}{\Gamma_2  \hpred_k(p_k) +\xi(\ef+\nu_k   \ec) } \\
& \stackrel{\leq}{ \mtiny{ \labelcref{hpred_cauchy},\labelcref{lem2_res}}} \frac{M_L(\nu_k) \Delta_k^2 + (\eg + \nu_k \eA) \Delta_k + 2(\ef + \nu_k \ec)}{  \frac{\Gamma_2 }2 \| (\tg_{k}+\tW_k v_k)^T\tZ_k \| \min\left( (1 - \zeta) \Delta_k,\frac{\| (\tg_{k}+\tW_k v_k)^T\tZ_k \|}{\| \tW_k \|}\right)+ \xi(\ef+\nu_k  \ec)} \\
& \stackrel{\leq}{\mtiny{\nu_k \leq\bar\nu}} \frac{M_L(\nu_k) \Delta_k^2 + (\eg + \bar\nu \eA) \Delta_k + 2(\ef + \nu_k \ec)}{  \frac{\Gamma_2 }2 \| (\tg_{k}+\tW_k v_k)^T\tZ_k \| \min\left( (1 - \zeta) \Delta_k,\frac{\| (\tg_{k}+\tW_k v_k)^T\tZ_k \|}{\| \tW_k \|}\right)+ \xi(\ef+\nu_k  \ec)} \\
& \stackrel{=}{  \mtiny{ \labelcref{smaller_delta_h} }   } \frac{[M_L(\nu_k) \Delta_k + (\eg + \bar\nu \eA)] \Delta_k + 2(\ef + \nu_k \ec)}{  \frac{\Gamma_2 (1 - \zeta)}2 \| (\tg_{k}+\tW_k v_k)^T\tZ_k \|   \Delta_k + \xi(\ef+\nu_k  \ec)}\\
& \stackrel{<}{\mtiny{ \labelcref{lem3_aux_h} }} \frac{\frac{ \Gamma_2   (1-\zeta) }{\xi } \| (\tg_k +\tW_k v_k )^{T} \tZ_k \| \Delta_k + 2(\ef + \nu_k \ec)}{  \frac{\Gamma_2 (1 - \zeta)}2 \| (\tg_{k}+\tW_k v_k)^T\tZ_k \|   \Delta_k + \xi(\ef+\nu_k  \ec)}\\
& = \frac2\xi\\
& = 1-\pi_0.
\end{split}\ee
By line 16 of Algorithm~1, the step is accepted.

\end{proof}

\begin{corollary}[Lower Bound of Trust Region Radius]\label{tr_lb_h}
Given ${\hat\gamma}>0$, if there exist $ K>0$ such that for all $k\geq K$
\be \| (\tg_k +\tW_k v_k )^{T} \tZ_k \| > \mathcal{E}_h  + {\hat\gamma} ,\ee
then there exist $\hat K \geq K $ such that for all $k\geq \hat K$,
\be \Delta_k > \tfrac{1}{\tau}\hat\Delta ({\hat\gamma}).\label{tr-bardel-h} \ee
\end{corollary}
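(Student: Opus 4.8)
The plan is to mimic the argument used for \cref{cor1}, replacing the feasibility trust-region increase lemma \cref{tr-lbBO} by its tangential counterpart \cref{tr-lb-hprob}. The hypothesis $\| (\tg_k +\tW_k v_k )^{T} \tZ_k \| > \mathcal{E}_h + {\hat\gamma}$ is assumed to hold for every $k \geq K$, which is exactly condition \eqref{asmp_h_del}; moreover, by \cref{pred_hred} the penalty parameter satisfies $\nu_k \leq \bar\nu$ for all $k$, so \cref{tr-lb-hprob} is applicable at each such iterate without further qualification.

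First I would argue that $\Delta_k$ must exceed $\hat\Delta({\hat\gamma})$ at some iterate $\hat K \geq K$. Indeed, as long as $\Delta_k \leq \hat\Delta({\hat\gamma})$, \cref{tr-lb-hprob} guarantees that the step is accepted and $\Delta_{k+1} = \tau \Delta_k$; since $\tau > 1$, the radius would then grow geometrically and hence eventually surpass $\hat\Delta({\hat\gamma})$, so such an index $\hat K$ exists.

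Next I would show that, once $\Delta_k > \hat\Delta({\hat\gamma})$ for some $k \geq \hat K$, the radius can never fall below $\hat\Delta({\hat\gamma})/\tau$ thereafter. The only mechanism that decreases $\Delta_k$ is the rejection branch (line 19 of Algorithm~\ref{algorithmBO}), which divides it by $\tau$. Proceeding by induction on $k \ge \hat K$ with hypothesis $\Delta_k > \hat\Delta({\hat\gamma})/\tau$: if $\Delta_k > \hat\Delta({\hat\gamma})$ and the step is rejected, then $\Delta_{k+1} = \Delta_k/\tau > \hat\Delta({\hat\gamma})/\tau$; if instead $\Delta_k \le \hat\Delta({\hat\gamma})$, then \cref{tr-lb-hprob} forces acceptance, giving $\Delta_{k+1} = \tau\Delta_k > \Delta_k > \hat\Delta({\hat\gamma})/\tau$. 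In either case $\Delta_{k+1} > \hat\Delta({\hat\gamma})/\tau$, which closes the induction and yields \eqref{tr-bardel-h}.

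Since the statement is essentially a transcription of \cref{cor1} with \cref{tr-lb-hprob} in place of \cref{tr-lbBO}, I do not anticipate any substantive obstacle; the only point requiring a moment of care is confirming that the hypotheses of \cref{tr-lb-hprob} — in particular the global bound $\nu_k \le \bar\nu$ that underlies the use of $M_L(\bar\nu)$ and $\mathcal{E}_h$ — are in force at every iterate $k \ge K$, which is precisely what \cref{pred_hred} supplies.
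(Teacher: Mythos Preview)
Your proposal is correct and follows essentially the same approach as the paper: apply \cref{tr-lb-hprob} at every iterate $k\ge K$ to force geometric growth of $\Delta_k$ until it exceeds $\hat\Delta({\hat\gamma})$, and then observe that the update rule in Algorithm~\ref{algorithmBO} prevents it from ever dropping below $\hat\Delta({\hat\gamma})/\tau$. Your explicit induction and your remark that \cref{pred_hred} supplies $\nu_k\le\bar\nu$ (needed for \cref{tr-lb-hprob}) are sound and, if anything, slightly more careful than the paper's terse justification.
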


\begin{proof}
We apply \cref{tr-lb-hprob} for each iterate after $K$ to deduce that, whenever $\Delta_k \leq \hat \Delta({\hat\gamma})$, the trust region radius will be increased. Thus, there is an index $\hat K$ for which $\Delta_k$ becomes greater than $\hat\Delta({\hat\gamma})$. On subsequent iterations, the trust region radius can never be reduced below $\hat\Delta({\hat\gamma})/\tau$ (by Step~6 of Algorithm~\ref{algorithmBO}) establishing \eqref{tr-bardel-h}. 
\end{proof}

Additionally, for any given $\mu >0$, define
 \be \label{defi2_h} {\bar\gamma} = \frac12 \left(-\mE_{h} + \sqrt{\mE_{h}^2 + 8 (\eps_f + \bar\nu \eps_c) / \Theta} \right)+ \mu.
\ee

\begin{lemma}[Merit Function Reduction in Tangential Problem]\label{noisyfuncred_h}
{Let Assumptions 1 through 4 be satisfied. Let $ k' $ denote the anchor iterate and $k''$ the settling iterate, as defined above. Suppose for some $k\geq\max(k',k'')$,}
\be\label{supp_h}\| (\tg_k +\tW_k v_k )^{T} \tZ_k \| > \mathcal{E}_h+ {\bar\gamma},\quad \text{and}\quad \Delta_k \, {\geq} \, \frac{\hat\Delta({{\bar\gamma})}}{\tau} , \ee 
 where $\hat \Delta (\cdot)$ is defined in \eqref{def_hat_del} and ${\bar\gamma}$ is defined in \eqref{defi2_h} 
 with $\mu >0$  an arbitrary  constant.
Then, 
%
\be \label{hpred_gamma}
\hpred_k(p_k) 
\geq \frac{ \Theta}{\pi_0\Gamma_2}\left(\mathcal{E}_h  + {\bar\gamma}\right){\bar\gamma}.
\ee
Furthermore, if the step is accepted at iteration $k$, we have
\be
\tilde \phi(x_k , \nu_k) - \tilde \phi(x_k + p_k , \nu_k) > \Theta \mu^2 +\mu \sqrt{\Theta^2\mE_{h}^2 + 8\Theta(\eps_f + \nu_{k''} \eps_c)}.
\ee

\end{lemma}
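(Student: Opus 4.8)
The plan is to follow the proof of Lemma~\ref{noisyfuncredBO} almost verbatim, replacing the normal-step ingredients by their tangential-step analogues: $\hpred_k$ in place of $\vpred_k$, $\mathcal{E}_h$ in place of $\mE_v(k')$, $\hat\Delta(\cdot)$ in place of $\bar\Delta(\cdot)$, $\Theta$ in place of $\chi$, and ${\bar\gamma}$ in place of $\gamma_{k'}$. The Cauchy-decrease bound \eqref{hpred_cauchy} plays the role of \eqref{lem1_res}, and the structural inequality $\pred_k\ge\Gamma_2\hpred_k$ from \eqref{gam2} plays the role of the merit rule $\pred_k\ge{\pi_1}\nu_k\vpred_k$. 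A preliminary remark: since $k\ge\max(k',k'')$, Lemma~\ref{pred_hred} gives $\nu_k=\nu_{k''}\le\bar\nu$, so $M_L(\nu_k)\le M_L(\bar\nu)$ and $\eps_f+\nu_k\eps_c\le{\bar\varepsilon}$ at iteration $k$.

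For the first (unconditional) assertion, observe that the first hypothesis in \eqref{supp_h} is precisely condition \eqref{asmp_h_del} of Lemma~\ref{tr-lb-hprob} with ${\hat\gamma}={\bar\gamma}$, so the first part of that lemma yields the identity \eqref{min_delta_bar_h}. I would then combine \eqref{min_delta_bar_h} with the second hypothesis $\Delta_k\ge\hat\Delta({\bar\gamma})/\tau$ and the fact $(1-\zeta)/\tau<1$ to get
\[
\min\left((1-\zeta)\Delta_k,\ \frac{\|(\tg_k+\tW_kv_k)^T\tZ_k\|}{\|\tW_k\|}\right)\ \ge\ \frac{1-\zeta}{\tau}\,\hat\Delta({\bar\gamma})\ =\ \frac{\Gamma_2(1-\zeta)^2}{\tau\xi M_L(\bar\nu)}\,{\bar\gamma},
\]
the last equality being \eqref{def_hat_del}. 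Plugging this into \eqref{hpred_cauchy}, using $\|(\tg_k+\tW_kv_k)^T\tZ_k\|>\mathcal{E}_h+{\bar\gamma}$ in the leading factor, and reading off from \eqref{defi1_h} that $\tfrac12\cdot\tfrac{\Gamma_2(1-\zeta)^2}{\tau\xi M_L(\bar\nu)}=\Theta/(\pi_0\Gamma_2)$, one obtains exactly \eqref{hpred_gamma}.

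For the second assertion, suppose the step is accepted at iteration $k$. By line~16 of Algorithm~\ref{algorithmBO}, the definition \eqref{relaxed} of $\rho_k$, and $\xi=2/(1-\pi_0)$, one has $\ared_k>\pi_0\pred_k-2(\eps_f+\nu_k\eps_c)$; combining this with \eqref{gam2} gives $\tphi(x_k,\nu_k)-\tphi(x_k+p_k,\nu_k)>\pi_0\Gamma_2\hpred_k-2(\eps_f+\nu_k\eps_c)$, and inserting \eqref{hpred_gamma} leaves $\tphi(x_k,\nu_k)-\tphi(x_k+p_k,\nu_k)>\Theta(\mathcal{E}_h+{\bar\gamma}){\bar\gamma}-2(\eps_f+\nu_k\eps_c)$. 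Now I would substitute the closed form \eqref{defi2_h} of ${\bar\gamma}$: writing ${\bar\gamma}=\tfrac12(S-\mathcal{E}_h)+\mu$ with $S=\sqrt{\mathcal{E}_h^2+8{\bar\varepsilon}/\Theta}$ gives $(\mathcal{E}_h+{\bar\gamma}){\bar\gamma}=\tfrac{2{\bar\varepsilon}}{\Theta}+\mu S+\mu^2$, hence $\Theta(\mathcal{E}_h+{\bar\gamma}){\bar\gamma}=2{\bar\varepsilon}+\Theta\mu^2+\mu\sqrt{\Theta^2\mathcal{E}_h^2+8\Theta{\bar\varepsilon}}$ --- the identical completing-the-square computation as in \eqref{leftout}. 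Finally $\nu_k=\nu_{k''}\le\bar\nu$ gives $\eps_f+\nu_k\eps_c\le{\bar\varepsilon}$, so $2{\bar\varepsilon}-2(\eps_f+\nu_k\eps_c)\ge0$ and $8\Theta{\bar\varepsilon}\ge8\Theta(\eps_f+\nu_{k''}\eps_c)$, and the stated lower bound follows.

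The only genuine work is the constant-chasing in the first part --- tracking the $\Gamma_2$, $(1-\zeta)^2$, $1/\tau$, and $1/2$ factors so that they collapse precisely to $\Theta/(\pi_0\Gamma_2)$ --- together with the minor point that ${\bar\gamma}$ in \eqref{defi2_h} is built from ${\bar\varepsilon}=\eps_f+\bar\nu\eps_c$ whereas the conclusion is phrased with $\nu_{k''}$; this discrepancy is harmless because $\nu_{k''}\le\bar\nu$. Conceptually nothing goes beyond Lemma~\ref{noisyfuncredBO}.
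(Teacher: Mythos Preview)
Your proposal is correct and follows essentially the same approach as the paper: invoke \eqref{min_delta_bar_h} from Lemma~\ref{tr-lb-hprob} to lower-bound the $\min$ in \eqref{hpred_cauchy}, collapse the constants to $\Theta/(\pi_0\Gamma_2)$, then for the second part combine the acceptance inequality $\ared_k>\pi_0\pred_k-2(\eps_f+\nu_k\eps_c)$ with \eqref{gam2} and the completing-the-square identity from \eqref{leftout}. Your handling of the $\bar\nu$ versus $\nu_{k''}$ discrepancy is in fact slightly more explicit than the paper's, which writes an equality where strictly an inequality is needed (since ${\bar\gamma}$ is defined via $\bar\varepsilon=\eps_f+\bar\nu\eps_c$ while the subtracted term uses $\nu_{k''}\le\bar\nu$).
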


\begin{proof} 
Since inequality \eqref{asmp_h_del} is satisfied, so is \eqref{min_delta_bar_h}. Thus
\be\label{tr-lem36_1_h}
    \begin{split}
        \min \left((1-\zeta)\Delta_k,\frac{\|(\tg_k+\tW_k v_k)^T \tZ_k\|}{\|\tW_k\|}\right)
        &\stackrel{\geq}{\mtiny{ \labelcref{supp_h} }} \min\left(\frac{1-\zeta}{\tau} \hat\Delta({\bar\gamma}),\frac{\|(\tg_k+\tW_k v_k)^T \tZ_k\|}{\|\tW_k\|}\right)\\
        &\stackrel{\geq}{\mtiny{ \labelcref{min_delta_bar_h} }} \frac{1-\zeta}{\tau} \hat\Delta({\bar\gamma}) \\
        &\stackrel{=}{\mtiny{ \labelcref{def_hat_del} }}\frac{ \Gamma_2   (1-\zeta)^2 }{\tau\xi  M_L(\bar\nu)}{\bar\gamma}.
    \end{split}
\ee
By \eqref{hpred_cauchy}, 
\benn 
\begin{split}
\hpred_k(p_k) 
    &\geq   \frac{1}2 \| (\tg_{k}+\tW_k v_k)^T\tZ_k \| \min\left( (1 - \zeta) \Delta_k,\frac{\| (\tg_{k}+\tW_k v_k)^T\tZ_k \|}{\| \tW_k \|}\right)\\
    & \stackrel{\geq}{\mtiny{\labelcref{supp_h}\labelcref{tr-lem36_1_h}}}
         \frac{1}{2} \left(\mathcal{E}_h  + {\bar\gamma}\right)\frac{ \Gamma_2   (1-\zeta)^2 }{\tau\xi  M_L(\bar\nu)}{\bar\gamma}\\
    & = 
    	\frac{ \Gamma_2   (1-\zeta)^2 }{2\tau\xi  M_L(\bar\nu)}\left(\mathcal{E}_h  + {\bar\gamma}\right){\bar\gamma}\\
     &= 
        \frac{ \Theta}{\pi_0\Gamma_2}\left(\mathcal{E}_h  + {\bar\gamma}\right){\bar\gamma},
\end{split}
\eenn
which proves the first part of the lemma.

Now, by Assumption 4 the singular values of $\tA_k$ are bounded below by $\sigma_{\min}$ and above by $\sigma_{\max}$. Therefore \cref{vk_vs_vpred,pred_hred} apply, and by \eqref{gam2}
we have that $ \pred_k \geq \Gamma_2 \hpred_k.$
Let a step be accepted. Then, as explained in \eqref{redex}, 
\be 
    \ared_k >  \pi_0 \pred_k -2(\eps_f + \nu_k \eps_c) = \pi_0 \pred_k -2(\eps_f + \nu_{k''} \eps_c) ,
\ee
and thus
\be 
\tilde \phi(x_k , \nu_k) - \tilde \phi(x_k + p_k , \nu_k) > \pi_0 \Gamma_2 \hpred_k - 2(\eps_f + \nu_{k''} \eps_c).
\ee
Using condition \eqref{hpred_gamma} 
we obtain 
\be
\begin{split}
    & \tilde \phi(x_k , \nu_k) - \tilde \phi(x_k + p_k , \nu_k) \\
    & > \pi_0 \Gamma_2 \hpred_k - 2(\eps_f + \nu_{k''} \eps_c)\\
    &= \Theta(\mathcal{E}_h+{\bar\gamma}){\bar\gamma} - 2(\eps_f + \nu_{k''} \eps_c)\\
    &= \Theta \mu^2 +\mu \sqrt{\Theta^2\mE_{h}^2 + 8\Theta (\eps_f + \nu_{k''} \eps_c)},
    \end{split}
\ee
    where the last equality follows as in the derivation of \eqref{leftout}.

\end{proof}

We now study the achievable reduction in the norm of the reduced gradient, $Z(x)^T g(x)$. Recall that $Z(x)$ and $\tilde Z(x)$ denote orthonormal bases for the null spaces of $A(x)$ and $\tA(x)$, respectively. We define
\be \tilde Z(x) - Z(x) = \delta_Z(x),
\ee
and make the following assumption.

\medskip\noindent \textbf{Assumption 5.} There exist constant $\eps_Z$ such that:
\be \|\delta_Z(x)\| \leq \eps_Z.\ee

One can  satisfy this assumption in practice if the same pivoting order is used in the QR factorization that computes $Z$. Or when $Z$ is not required to be orthonormal, we can achieve this by using the same basic/nonbasic set, as discussed in the appendix.

We add some additional comments about this assumption. This assumption is realistic and can be satisfied by specific choices of computing $Z(x)$ from $A(x)$ as long as the quantities being computed are well defined. Furthermore, the new quantity $\eps_Z$ in this assumption can be shown to depend on, for instance, $\eps_A$ and conditioning numbers of the matrices. 

To bound the differences between $\tilde Z(x)^T \tilde g(x)$ and $Z(x)^Tg(x)$, we also define 
\be \bar G^{II}_{Ac}(k') = \sup_{x\in C_{Ac}^{II}(k')} {\|g(x)\|}.\ee

\begin{lemma}\label{gZ_diff_bd}
    Let Assumptions 1 through 5 be satisfied. If $x\in C_{Ac}^{II}(k')$, then        
        \be \|g(x)^T Z(x) -\tg(x)^T \tZ(x)\| \leq \varepsilon_{gZ}, \quad\text{where} \quad \varepsilon_{gZ} = \eps_g  + \eps_Z \bar G^{II}_{Ac}(k') + \eps_g\eps_Z. \ee
\end{lemma}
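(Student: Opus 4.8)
The plan is to bound $\|g(x)^T Z(x) - \tg(x)^T \tZ(x)\|$ by adding and subtracting a mixed term, writing
\[
g^T Z - \tg^T \tZ = g^T Z - g^T \tZ + g^T \tZ - \tg^T \tZ = g^T(Z - \tZ) + (g - \tg)^T \tZ.
\]
Since $\tZ = Z + \delta_Z$ we have $Z - \tZ = -\delta_Z$, and since $\tg = g + \delta_g$ we have $g - \tg = -\delta_g$. Taking norms and using the triangle inequality gives $\|g^T Z - \tg^T \tZ\| \le \|g\|\,\|\delta_Z\| + \|\delta_g\|\,\|\tZ\|$.

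Next I would control each factor. For the noise terms, Assumption~3 gives $\|\delta_g(x)\| \le \eps_g$ and Assumption~5 gives $\|\delta_Z(x)\| \le \eps_Z$. For $\|g(x)\|$, the hypothesis $x \in C_{Ac}^{II}(k')$ together with the definition $\bar G^{II}_{Ac}(k') = \sup_{x \in C_{Ac}^{II}(k')}\|g(x)\|$ yields $\|g(x)\| \le \bar G^{II}_{Ac}(k')$. Finally, $\tZ(x)$ is an orthonormal basis for the null space of $\tA(x)^T$, so $\|\tZ(x)\| = 1$ (its columns are orthonormal, hence its spectral norm is one). Substituting these four bounds gives
\[
\|g(x)^T Z(x) - \tg(x)^T \tZ(x)\| \le \eps_Z \bar G^{II}_{Ac}(k') + \eps_g,
\]
which is already tighter than the claimed bound; the extra $\eps_g \eps_Z$ term in the statement is a harmless slack (it could arise if one instead bounded $\|\tZ\| \le \|Z\| + \eps_Z = 1 + \eps_Z$, giving $\eps_g(1+\eps_Z) + \eps_Z \bar G^{II}_{Ac}(k') = \eps_g + \eps_Z \bar G^{II}_{Ac}(k') + \eps_g\eps_Z$, exactly $\varepsilon_{gZ}$), so I would follow that slightly looser route to match the stated constant verbatim.

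There is essentially no obstacle here: the lemma is a routine decomposition-and-triangle-inequality estimate, entirely analogous to the splitting used for $\|\tA^T\tc\|$ in \eqref{tAc} within the proof of Proposition~\ref{prop1}. The only point requiring a moment's care is justifying $\|g(x)\| \le \bar G^{II}_{Ac}(k')$, which is immediate from the supremum definition once we invoke the hypothesis $x \in C_{Ac}^{II}(k')$; and noting that the operator norm of an orthonormal basis matrix is exactly $1$. The result is stated with $\varepsilon_{gZ}$ written in a form that suggests the bound $\|\tZ\| \le 1 + \eps_Z$ was used, so I would present it that way for consistency with the paper's constant.
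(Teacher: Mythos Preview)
Your proposal is correct and essentially the same as the paper's proof. The paper expands the product directly, writing $\tg^T\tZ=(g+\delta_g)^T(Z+\delta_Z)$ to obtain the three terms $-\delta_g^T Z - g^T\delta_Z - \delta_g^T\delta_Z$ and then bounds each using $\|Z\|=1$; this is algebraically identical to your two-term decomposition once you expand $\delta_g^T\tZ=\delta_g^T Z+\delta_g^T\delta_Z$ (equivalently, once you take the looser route $\|\tZ\|\le 1+\eps_Z$ that you already identified).
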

\begin{proof} We have that 
    \be \begin{split}
            \|g(x)^T Z(x) -\tg(x)^T \tZ(x)\|  &= \|g(x)^TZ(x) - [g(x)+\delta_g(x)]^T[Z(x)+\delta_Z(x)]\| \\
                                          &= \| -\delta_g(x)^T Z(x) + g(x)^T \delta_Z(x)  + \delta_g(x)^T\delta_Z(x) \| \\
                                          &\leq \eps_g  + \eps_Z \| g(x) \|+ \eps_g\eps_Z  \\
                                          &\leq \eps_g  + \eps_Z \bar G^{II}_{Ac}(k') + \eps_g\eps_Z \\
                                          &= \varepsilon_{gZ}.
        \end{split}\ee
\end{proof}

\begin{proposition}[Finite Time Entry to Critical Region 1 of Optimality]
\label{prop3}
Suppose that Assumptions 1 through 5 hold. Let $ k' $ denote the anchor iterate and $k''$ the settling iterate. Then, once the sequence  $\{x_k\}$ generated by Algorithm 1 visits $C_{Ac}^I(k')$ for the first time, it visits infinitely often the region $C^I_{gZ}$ defined as
\be C^I_{gZ}:= \{ x| \|g(x)^{T} Z(x) \|\leq \mathcal{E}_{gZ}^{I}\}, \ee
where 
\be \mathcal{E}_{gZ}^{I}(k',k'')= \mathcal{E}_h   +\frac{\Gamma_1 L_W\mathcal{E}_{Ac}^{II}}{\sigma_{\min}}+\varepsilon_{gZ}+ {\bar\gamma}, \label{EgZI}\ee
and, as before,
\be \mathcal{E}_{Ac}^{II} = \sup_{x\in C_{Ac}^{II}} \|\tA(x)^T \tc(x)\|.\label{cEAcII}\ee
\end{proposition}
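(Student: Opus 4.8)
The plan is to adapt the proof of \cref{prop1} to the tangential (optimality) setting, relying on the feasibility control established in \cref{prop1,prop2} to absorb the contribution of the normal step $v_k$. Since the sequence is assumed to have entered $C_{Ac}^I(k')$ at some first index $K_0$, \cref{prop2} guarantees $x_k \in C_{Ac}^{II}(k')$ for every $k \ge K_0$. I would then argue by contradiction: assume only finitely many iterates after $K_0$ lie in $C_{gZ}^I$, so there is $K \ge \max(K_0,k',k'')$ with $\|g(x_k)^T Z(x_k)\| > \mathcal{E}_{gZ}^I$ for all $k > K$.

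The core step --- and the one I expect to require the most care --- is to convert this bound on the \emph{exact} reduced gradient into the bound $\|(\tg_k+\tW_k v_k)^T\tZ_k\| > \mE_h + \bar\gamma$ on the quantity governing the tangential subproblem, so that \cref{tr-lb-hprob,tr_lb_h,noisyfuncred_h} can be invoked. Three estimates achieve this, all valid because $x_k\in C_{Ac}^{II}(k')$: (i) \cref{gZ_diff_bd} gives $\|\tg_k^T\tZ_k\| \ge \|g_k^T Z_k\| - \varepsilon_{gZ}$; (ii) by Assumption~4 the smallest singular value of $\tA_k$ is at least $\sigma_{\min}$, so $\sigma_{\min}\|\tc_k\| \le \|\tA_k^T\tc_k\| \le \mathcal{E}_{Ac}^{II}$ by \eqref{cEAcII}, whence $\|\tc_k\| \le \mathcal{E}_{Ac}^{II}/\sigma_{\min}$; and (iii) \cref{vk_vs_vpred} together with $\vpred_k \le \|\tc_k\|$ (from \eqref{vpred}) yields $\|v_k\| \le \Gamma_1\|\tc_k\| \le \Gamma_1\mathcal{E}_{Ac}^{II}/\sigma_{\min}$, hence $\|v_k^T\tW_k\tZ_k\| \le M_W\|v_k\| \le \Gamma_1 L_W\mathcal{E}_{Ac}^{II}/\sigma_{\min}$ since $\tZ_k$ has orthonormal columns and $L_W = M_W$ bounds $\|\tW_k\|$; this last quantity is precisely the term appearing in \eqref{EgZI}. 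Combining these by the triangle inequality gives $\|(\tg_k+\tW_k v_k)^T\tZ_k\| \ge \|\tg_k^T\tZ_k\| - \|v_k^T\tW_k\tZ_k\| > \mathcal{E}_{gZ}^I - \varepsilon_{gZ} - \Gamma_1 L_W\mathcal{E}_{Ac}^{II}/\sigma_{\min} = \mE_h + \bar\gamma$, i.e.\ exactly the first part of hypothesis \eqref{supp_h} with $\hat\gamma=\bar\gamma$. The work here is essentially bookkeeping to confirm that the three constants building $\mathcal{E}_{gZ}^I$ in \eqref{EgZI} were chosen to cancel precisely these slacks.

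With this strict inequality for all $k>K$, \cref{tr_lb_h} applied with $\hat\gamma=\bar\gamma$ supplies $\hat K\ge K$ such that $\Delta_k > \hat\Delta(\bar\gamma)/\tau$ for all $k\ge\hat K$, so both parts of \eqref{supp_h} hold there. \cref{noisyfuncred_h} then guarantees that each \emph{accepted} step at such $k$ decreases the merit function by at least $\Theta\mu^2 + \mu\sqrt{\Theta^2\mE_h^2 + 8\Theta(\eps_f+\nu_{k''}\eps_c)}$, a fixed positive quantity. Since $k\ge k''$ the penalty parameter is frozen at $\nu_{k''}$ by \cref{pred_hred}, so $\tilde\phi(\cdot,\nu_k)$ is a single fixed function, non-increasing for $k\ge\hat K$ and strictly decreasing by that amount at every accepted step. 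As in the proof of \cref{prop1}, there must be infinitely many accepted steps after $\hat K$, since otherwise line~19 of Algorithm~\ref{algorithmBO} forces $\Delta_k\to0$, contradicting $\Delta_k>\hat\Delta(\bar\gamma)/\tau$. Hence $\tilde\phi(x_k,\nu_{k''})\to-\infty$, contradicting the lower boundedness of $\tilde\phi = \tilde f + \nu\|\tilde c\|$ implied by Assumption~2. This contradiction establishes that $\{x_k\}$ enters $C_{gZ}^I$ infinitely often.
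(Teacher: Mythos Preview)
Your proposal is correct and follows essentially the same approach as the paper's proof: both use \cref{prop2} to confine the iterates to $C_{Ac}^{II}$, bound $\|v_k\|$ via \cref{vk_vs_vpred} and $\|\tc_k\|\le\mathcal{E}_{Ac}^{II}/\sigma_{\min}$, convert the contradiction hypothesis into $\|(\tg_k+\tW_k v_k)^T\tZ_k\|>\mE_h+\bar\gamma$ via \cref{gZ_diff_bd} and the triangle inequality, and then invoke \cref{tr_lb_h} and \cref{noisyfuncred_h} to force $\tilde\phi\to-\infty$. Your treatment is in fact slightly cleaner in places (you make explicit that $\|\tZ_k\|=1$ by orthonormality and that $L_W$ is just $M_W$), but the argument is the same.
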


\begin{proof} Recall that once the iterates enter $C_{Ac}^{I}$, by \cref{prop2}, they remain in $C_{Ac}^{II}$. Thus, since the singular values of $\tA_k$ are assumed to bounded below by $\sigma_{\min}>0$,  we have from the definition \eqref{cEAcII}
\be \|c_k\| \leq \frac{\mathcal{E}_{Ac}^{II}}{\sigma_{\min}}. \ee
Applying condition \eqref{asm11} from \cref{vk_vs_vpred} we obtain
\be\| v_k \| \leq \Gamma_1 \vpred_k\leq \Gamma_1 \|c_k\| \leq \frac{\Gamma_1\mathcal{E}_{Ac}^{II}}{\sigma_{\min}}.\ee 
Therefore, 
\be\| (\tW_k v_k )^{T} \tZ_k \| \leq L_W \|v_k\|\|\tZ_k\| \leq \frac{\Gamma_1 L_W\mathcal{E}_{Ac}^{II}}{\sigma_{\min}}.\label{Wv_bd}\ee

We now proceed by  means of contradiction and assume that there exist an integer $K$, such that for all $k>K$, none of the iterates is in $C_{gZ}^{I}$, i.e.,
\be \|g_k^T Z_k\| >  \mathcal{E}_{gZ}^{I}, \ee
and by \cref{gZ_diff_bd},
\be \|\tg_k^T \tZ_k\| >  \mathcal{E}_{gZ}^{I}- \varepsilon_{gZ}. \label{c1h_asp}\ee 
Thus, for all $k>K$:
\be \label{gwvZ}
\begin{split}
    \| (\tg_k +\tW_k v_k )^{T} \tZ_k \| 
        & \geq \| \tg_k^T \tZ_k\| -\| (\tW_k v_k )^{T} \tZ_k  \|\\
        &\stackrel{\geq}{\mtiny{ \labelcref{c1h_asp},\labelcref{Wv_bd} }}\mathcal{E}_{gZ}^{I} - \varepsilon_{gZ}- \frac{\Gamma_1 L_W\mathcal{E}_{Ac}^{II}}{\sigma_{\min}}\\
        &\stackrel{=}{\mtiny{\labelcref{EgZI}}}  \mathcal{E}_h   + {\bar\gamma}.
\end{split}
\ee
Therefore, \cref{tr_lb_h} applies showing the existence of a lower bound for the trust region radii 
for a sufficiently large $k$. This implies that there will be infinitely many accepted steps (for otherwise $\Delta_k\rightarrow 0$).
For $k$ large enough and for each of the accepted steps we have by \cref{noisyfuncred_h} that
\be
\tilde \phi(x_k , \nu_k) - \tilde \phi(x_k + p_k , \nu_k) > \Theta \mu^2 +\mu \sqrt{\Theta^2\mE_{h}^2 + 8\Theta (\eps_f + \nu_{k''} \eps_c)}.
\ee
Since this inequality holds infinitely often,  $\{\tilde \phi(x_k, \nu_k)\}$ is unbounded below, which is a contradiction. Therefore our assumption is incorrect, proving the iterates visits $C_{gZ}^I$ infinitely often.

\end{proof}

\begin{lemma}[Bound on Displacement Outside of Critical Region I of Optimality]\label{disp_h}
Let Assumptions 1 through 5 be satisfied and let $k',k''$ be the anchor and settling iterates, respectively. Let $k_1 \geq \max{(k',k'')}$ be such that $x_{k_1} \in C_{gZ}^{I}$  and $x_{k_{1}+1}\notin C_{gZ}^{I}$.
Then, if $\Delta_{k_1} < \hat\Delta_{{\bar\gamma}}$,  there exist a finite iterate $k_2\geq k_{1}+1$, defined as
\be \label{k2h}
k_2 = \min \left\{ k \geq k_1 + 1 : \Delta_k \geq \hat{\Delta}_{k''} \text{ or } x_k \in C_{gZ}^I \right\}.
\ee
Furthermore, for any $k$ with $k_1 \leq k \leq k_2$ we have that
\be \| x_{k} - x_{k_1} \| \leq \frac{\tau}{\tau-1} \hat\Delta_{k''} \ee
\end{lemma}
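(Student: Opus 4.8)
The plan is to follow the proof of \cref{disp_v} essentially line for line, substituting the tangential-problem machinery for the feasibility machinery: the region $C_{gZ}^I$ plays the role of $C_{Ac}^I(k')$, the quantity $\|(\tg_k+\tW_k v_k)^T\tZ_k\|$ plays the role of $\|\tA_k^T\tc_k\|$, the threshold $\mathcal{E}_h+\bar\gamma$ plays the role of $\mE_v(k')+\gamma_{k'}$, \cref{tr-lb-hprob} plays the role of \cref{tr-lbBO}, and $\hat\Delta_{k''}=\hat\Delta(\bar\gamma)$ plays the role of $\bar\Delta_{k'}$. (The symbols $\hat\Delta_{\bar\gamma}$ appearing in the hypothesis and $\hat\Delta_{k''}$ appearing in \eqref{k2h} denote the same number, by \eqref{def_hat_del} and \eqref{defi2_h}.)

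First I would prove that $k_2$ is finite, arguing by contradiction. If $k_2$ were infinite, then for every $k\ge k_1+1$ we would have both $\Delta_k<\hat\Delta_{k''}$ and $x_k\notin C_{gZ}^I$; the latter means $\|g_k^TZ_k\|>\mathcal{E}_{gZ}^I$. Since $k_1\ge\max(k',k'')$ lies past the first entry of the iterates into $C_{Ac}^I(k')$ — which precedes any visit to $C_{gZ}^I$, by \cref{prop3} — \cref{prop2} keeps all of these iterates in $C_{Ac}^{II}(k')$, so the estimates underlying \eqref{Wv_bd}, namely $\|v_k\|\le\Gamma_1\mathcal{E}_{Ac}^{II}/\sigma_{\min}$ and hence $\|(\tW_k v_k)^T\tZ_k\|\le\Gamma_1 L_W\mathcal{E}_{Ac}^{II}/\sigma_{\min}$, remain valid. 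Combining this with \cref{gZ_diff_bd} exactly as in \eqref{gwvZ} gives $\|(\tg_k+\tW_k v_k)^T\tZ_k\|>\mathcal{E}_h+\bar\gamma$, so hypothesis \eqref{asmp_h_del} of \cref{tr-lb-hprob} holds with $\hat\gamma=\bar\gamma$; since $\Delta_k\le\hat\Delta(\bar\gamma)$, that lemma forces the step to be accepted with $\Delta_{k+1}=\tau\Delta_k$. Iterating from $k_1+1$ yields $\Delta_k\to\infty$, contradicting $\Delta_k<\hat\Delta_{k''}$. Hence $k_2$ is finite.

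For the displacement bound I would fix $k$ with $k_1\le k\le k_2$. For every intermediate index $j\in\{k_1,\dots,k_2-1\}$ we have $x_j\notin C_{gZ}^I$, so the same chain (still valid because $x_j\in C_{Ac}^{II}(k')$) gives $\|(\tg_j+\tW_j v_j)^T\tZ_j\|>\mathcal{E}_h+\bar\gamma$, and also $\Delta_j<\hat\Delta_{k''}$: for $j=k_1$ this is the hypothesis $\Delta_{k_1}<\hat\Delta_{\bar\gamma}$, and for $j>k_1$ it follows from the minimality in \eqref{k2h}. \cref{tr-lb-hprob} then gives $\Delta_{j+1}=\tau\Delta_j$ for each such $j$, so $\Delta_{k_2-1-i}=\tau^{-i}\Delta_{k_2-1}<\tau^{-i}\hat\Delta_{k''}$ for $i=0,\dots,k_2-k_1-1$. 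Using $\|x_{j+1}-x_j\|\le\Delta_j$ and summing the geometric series exactly as at the end of the proof of \cref{disp_v},
\[
\|x_k-x_{k_1}\|\le\sum_{j=k_1}^{k_2-1}\Delta_j=\sum_{i=0}^{k_2-k_1-1}\tau^{-i}\Delta_{k_2-1}<\hat\Delta_{k''}\sum_{i=0}^{\infty}\tau^{-i}=\frac{\tau}{\tau-1}\hat\Delta_{k''},
\]
which is the claim.

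The main obstacle — the only place this differs in substance from \cref{disp_v} — is ensuring that the bound \eqref{Wv_bd} (ultimately the bound on $\|v_k\|$ from \cref{vk_vs_vpred}, which requires Assumption~4) applies at \emph{every} iterate encountered between leaving and re-entering $C_{gZ}^I$, not merely at the accepted ones. This is precisely where the hypothesis $k_1\ge\max(k',k'')$ together with \cref{prop1}, \cref{prop2} and \cref{prop3} are needed: they guarantee that from $k_1$ onward the iterates never leave $C_{Ac}^{II}(k')$, so that the condition $\|g_k^TZ_k\|>\mathcal{E}_{gZ}^I$ can be upgraded uniformly to $\|(\tg_k+\tW_k v_k)^T\tZ_k\|>\mathcal{E}_h+\bar\gamma$ and \cref{tr-lb-hprob} can be invoked at each step. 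Everything else is a routine transcription of the feasibility argument.
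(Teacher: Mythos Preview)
Your proposal is correct and follows the paper's own proof essentially line for line: contradiction for the finiteness of $k_2$ via repeated application of \cref{tr-lb-hprob}, then the geometric-series displacement bound. Your added remark that the chain \eqref{gwvZ} requires the iterates to stay in $C_{Ac}^{II}(k')$ (via \cref{prop2}) is a useful clarification that the paper leaves implicit. One small slip: you assert $x_j\notin C_{gZ}^I$ for every $j\in\{k_1,\dots,k_2-1\}$, but this is false at $j=k_1$ by hypothesis; the recursion $\Delta_{k_1+1}=\tau\Delta_{k_1}$ instead follows directly from the algorithm because the step at $k_1$ is accepted (since $x_{k_1+1}\neq x_{k_1}$). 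The paper's proof has the same minor gap.
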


\begin{proof}

We  show the first part of the lemma by  means of contradiction. Assume for contradiction that $k_2$ is not finite. Therefore, for $k = k_1+1, k_1+2 , ... $, 
\be \Delta_k < \hat\Delta_{k''}\label{lm5_delta_bd_h}\ee 
and
\be x_k\notin C_{gZ}^{I}(k''),\ee 
which as argued in \eqref{gwvZ}, implies
\be\| (\tg_k +\tW_k v_k )^{T} \tZ_k \| \geq \mathcal{E}_h   + {\bar\gamma}.
\label{tswim}\ee
Therefore we apply \cref{tr-lb-hprob} for each iterate {$k \geq k_1+1$} and obtain that $\Delta_k\rightarrow \infty$ as $k\rightarrow \infty$, contradicting \eqref{lm5_delta_bd_h}.

For the rest of the lemma, we take any $k$ with $k_1 < k < k_2$ and have that $x_k \notin C_{gZ}^{I}$,
thus \eqref{tswim} holds.
Also, by assumption for each of these iterates $k$,
$\Delta_{k} < \hat\Delta_{k''}.$ Therefore by \cref{tr-lb-hprob}, 
$ \Delta_{k+1} = \tau \Delta_k .$
Also note $\Delta_{k_2-1} < \hat\Delta_{k''}$, thus for $i = 0, 1, ..., k_2-k_1-1$
\be \Delta_{k_2-1-i} = \tau^{-i} \Delta_{k_2-1} < \tau^{-i} \hat\Delta_{k''}.\ee
Therefore
\be
\begin{split}
    \|x_{k} - x_{k_1} \| &\leq \sum_{i = 1}^{k - k_1} \|x_{k_1 + i} - x_{k_1 + i-1}\|\\
				&\leq \sum_{i = 1}^{k_2 - k_1} \|x_{k_1 + i} - x_{k_1 + i-1}\|\\
				&\leq \sum_{j = k_1}^{k_2-1} \Delta_{j}\\
			        & = \sum_{i = 0}^{ k_2 - 1 - k_1} \tau^{-i} \Delta_{k_2-1}\\
    				& < \hat\Delta_{k''}\sum_{i = 0}^{\infty}\tau^{-i}\\
    				& = \frac{\tau}{\tau-1} \hat\Delta_{k''}.
\end{split}
\ee
\end{proof}

We now define a maximum value of the re-scaled merit function $\tilde\phi(x,\nu_{k''})$ in $C_{gZ}^{I}(k')$. In particular,
\be\label{phiIgZ} \bar\phi_{gZ}^I = \sup_{x\in C_{gZ}^{I}} \phi(x , \nu_{k''}) \ee
Furthermore, we define a maximum value of the gradient of the objective function in $C_{gZ}^{I}(k')$ as
\be \bar G^I_{gZ} = \sup_{x\in C_{gZ}^{I}} \|g(x)\|.\ee
The next proposition we present will demonstrate that the iterates cannot stray too far from stationary points in the sense that the merit function is bounded. For this bound, we shall state the result for the merit function problem without noise:
\be \label{scale}\phi(x) = f(x) + \nu c(x).\ee





\begin{proposition}[Remaining in Critical Region II of Feasibility]\label{prop4}
Once an iterate enters  $C_{gZ}^{I}$, it never leaves the set $C_{gZ}^{II}$ defined as
\be
 C_{gZ}^{II} =\left\{x: \phi( x , \nu )  \leq  \bar\phi_{gZ}^I +  \max(\mathcal{P}_{gZ}^{II} , 2 {\bar\varepsilon}  )+ 2 {\bar\varepsilon}  := E_{gZ}^{II}\right\},
\label{EgZII}
\ee
where {$\phi$ is defined in \cref{scale}} and
\be \mathcal{P}_{gZ}^{II}=\left[    \bar G^I_{gZ} + \nu_{k''} \mE_{Ac}^{II} +      \frac{ \tau\Gamma_2   (1-\zeta) }{(1-\tau)\xi  M_L(\bar\nu)}{\bar\gamma}  \right]  \frac{ \tau\Gamma_2   (1-\zeta) }{(1-\tau)\xi  M_L(\bar\nu)}{\bar\gamma}.\ee
\end{proposition}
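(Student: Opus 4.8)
The plan is to mirror the proof of Proposition~\ref{prop2}, now working past the \emph{settling iterate} $k''$ so that $\nu_k=\nu_{k''}$ for all $k\ge\max(k',k'')$. Because the penalty parameter is then constant, the merit function is a fixed function and we may argue directly with $\phi(\cdot,\nu_{k''})$ and $\tilde\phi(\cdot,\nu_{k''})$, so neither the rescaling device nor Observation~\ref{monomerit} used in Proposition~\ref{prop2} is needed. We restrict attention to iterates $k\ge\max(k',k'')$. Since the sequence enters $C_{gZ}^{I}$ only after entering $C_{Ac}^{I}$ (Proposition~\ref{prop3}), Proposition~\ref{prop2} guarantees that every such iterate lies in $C_{Ac}^{II}$; hence $\|A(x_k)^{T}c(x_k)\|$ is bounded by the constant that enters $\mathcal{P}_{gZ}^{II}$ through $\mathcal{E}_{Ac}^{II}$. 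Finally $C_{gZ}^{I}\subseteq C_{gZ}^{II}$ by the definition of $E_{gZ}^{II}$, so it is enough to control $\phi$ along each excursion of $\{x_k\}$ away from $C_{gZ}^{I}$.

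First I would set up the bookkeeping as in Lemma~\ref{disp_h}: take $k_1\ge\max(k',k'')$ with $x_{k_1}\in C_{gZ}^{I}$ and $x_{k_1+1}\notin C_{gZ}^{I}$ (so the step at $k_1$ was accepted), and let $k_2=\min\{k\ge k_1+1:\Delta_k\ge\hat\Delta_{k''}\ \text{or}\ x_k\in C_{gZ}^{I}\}$, which is finite by that lemma; the first return index $\hat K\ge k_2$ to $C_{gZ}^{I}$ is finite by Proposition~\ref{prop3}. By Assumption~3, $|\tilde\phi(x_k,\nu_{k''})-\phi(x_k,\nu_{k''})|\le\eps_f+\nu_{k''}\eps_c\le\bar\varepsilon$ for all $k\ge k''$. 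Lemma~\ref{pred_hred} gives $\pred_k\ge\Gamma_2\hpred_k\ge0$, so \eqref{redex} at $k_1$ yields $\ared_{k_1}>-2\bar\varepsilon$; thus $\tilde\phi(x_{k_1+1},\nu_{k''})<\tilde\phi(x_{k_1},\nu_{k''})+2\bar\varepsilon<\phi(x_{k_1},\nu_{k''})+3\bar\varepsilon\le\bar\phi_{gZ}^{I}+3\bar\varepsilon$.

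Next I would split on the size of $\Delta_{k_1}$. Suppose first $\Delta_{k_1}\ge\hat\Delta_{k''}$. For every $k\ge k_1+1$ with $x_k\notin C_{gZ}^{I}$, the chain of \eqref{gwvZ} gives $\|(\tg_k+\tW_k v_k)^{T}\tZ_k\|\ge\mathcal{E}_h+\bar\gamma$, so \eqref{asmp_h_del} holds with $\hat\gamma=\bar\gamma$ and Lemma~\ref{tr-lb-hprob} forces $\Delta_{k+1}=\tau\Delta_k$ whenever $\Delta_k<\hat\Delta_{k''}$; an easy induction (using that the step at $k_1$ was accepted) then gives $\Delta_k\ge\hat\Delta_{k''}/\tau$ throughout the excursion, so Lemma~\ref{noisyfuncred_h} applies and $\tilde\phi(\cdot,\nu_{k''})$ does not increase along it. Hence $\phi(x_k,\nu_{k''})\le\tilde\phi(x_k,\nu_{k''})+\bar\varepsilon\le\tilde\phi(x_{k_1+1},\nu_{k''})+\bar\varepsilon<\bar\phi_{gZ}^{I}+4\bar\varepsilon\le E_{gZ}^{II}$ for every such $k$. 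Suppose instead $\Delta_{k_1}<\hat\Delta_{k''}$. Lemma~\ref{disp_h} gives $\|x_k-x_{k_1}\|\le\frac{\tau}{\tau-1}\hat\Delta_{k''}$ for $k_1\le k\le k_2$, and then the $L_f$- and $L_c$-Lipschitz estimates — carried out exactly as in \eqref{prop1_res1}, but with $\bar G^{I}_{gZ}$ replacing $\bar G^{I}_{Ac}(k')$, with the $C_{Ac}^{II}$ bound on $\|A^{T}c\|$ replacing $\mathcal{E}_{Ac}^{I}$, and using $\hat\Delta_{k''}=\tfrac{\Gamma_2(1-\zeta)}{\xi M_L(\bar\nu)}\bar\gamma$ — give $\phi(x_k,\nu_{k''})-\phi(x_{k_1},\nu_{k''})\le\mathcal{P}_{gZ}^{II}$ for $k_1\le k\le k_2$, hence $\phi(x_k,\nu_{k''})\le\bar\phi_{gZ}^{I}+\mathcal{P}_{gZ}^{II}\le E_{gZ}^{II}$ on that range. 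If $x_{k_2}\in C_{gZ}^{I}$ the excursion is finished; otherwise $\Delta_{k_2}\ge\hat\Delta_{k''}$ and Lemma~\ref{noisyfuncred_h} applies from $k_2$ to $\hat K$, so $\tilde\phi(\cdot,\nu_{k''})$ is non-increasing there and $\phi(x_k,\nu_{k''})\le\phi(x_{k_2},\nu_{k''})+2\bar\varepsilon\le\bar\phi_{gZ}^{I}+\mathcal{P}_{gZ}^{II}+2\bar\varepsilon\le E_{gZ}^{II}$. Taking the union over the excursions between consecutive visits to $C_{gZ}^{I}$ proves the claim.

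The main obstacle is, as in Proposition~\ref{prop2}, the control of the short-radius excursion in the second case. Two points require care: (i) one must check that \eqref{asmp_h_del} holds along the \emph{whole} excursion so that Lemmas~\ref{tr-lb-hprob} and~\ref{noisyfuncred_h} can be invoked — this relies on the $C_{Ac}^{II}$ containment from Proposition~\ref{prop2} to bound $\|v_k\|$ and hence $\|\tW_k v_k\|$ (as in \eqref{Wv_bd}), so that $\|(\tg_k+\tW_k v_k)^{T}\tZ_k\|$ stays above the threshold $\mathcal{E}_h+\bar\gamma$ even when $\|\tg_k^{T}\tZ_k\|$ exceeds it only slightly; and (ii) one must combine the two Lipschitz bounds, using the identity $\frac{\tau}{\tau-1}\hat\Delta_{k''}=\frac{\tau\Gamma_2(1-\zeta)}{(\tau-1)\xi M_L(\bar\nu)}\bar\gamma$, so that the growth of $\phi$ collapses to exactly $\mathcal{P}_{gZ}^{II}$. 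Everything else is routine once the tangential analogues of Lemmas~\ref{disp_v} and~\ref{noisyfuncredBO} and of Corollary~\ref{cor1} — namely Lemmas~\ref{disp_h} and~\ref{noisyfuncred_h} and Corollary~\ref{tr_lb_h} — are available.
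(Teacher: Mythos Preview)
Your proposal is correct and follows essentially the same approach as the paper's own proof: the same setup of $k_1$ and $k_2$ via Lemma~\ref{disp_h}, the same exit bound $\tilde\phi(x_{k_1+1},\nu_{k''})<\bar\phi_{gZ}^{I}+3\bar\varepsilon$, the same two-case split on whether $\Delta_{k_1}\ge\hat\Delta_{k''}$, and the same combination of Lemmas~\ref{tr-lb-hprob}, \ref{noisyfuncred_h} and the Lipschitz displacement estimate to reach $\mathcal{P}_{gZ}^{II}$. Your explicit remark that no rescaling or Observation~\ref{monomerit} is needed (since $\nu_k\equiv\nu_{k''}$) and your identification of the $C_{Ac}^{II}$ containment as the source of the $\|v_k\|$ bound are exactly the points on which the argument hinges.
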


\begin{proof}

We let $k_1$ and $k_2$  be defined as in the last lemma:
\be x_{k_1}\in C_{gZ}^{I}(k'), \quad x_{k_1+1}\notin C_{gZ}^{I}(k'),\ee
\be \label{k2_prop2_h}
k_2 = \min \left\{ k \geq k_1 + 1 : \Delta_k \geq \hat{\Delta}_{k''} \text{ or } x_k \in C_{gZ}^I \right\},
\ee
and recall that $k_2$ is finite.

Since we consider only iterates $k$ with  $k\geq k''$, at which point the merit parameter has attained its final value $\nu_k = \nu_{k''} \leq \bar \nu$, we have for $k = k_1, .... $
\be \label{phi_dist_h}
\begin{split}
    |\tilde \phi(x_{k},\nu_k) - \phi(x_{k},\nu_k) |
      \leq& | \delta_f(x_{k})| + \nu_k\| \delta_c(x_{k}) \|\\
    \leq& \eps_f + \nu_{k}  \eps_c\\
    \leq & \eps_f + \bar\nu \eps_c\\
    = & {\bar\varepsilon}.
\end{split}
\ee
Since the step from $k_1 $ is accepted, we have that \cref{redex}-\cref{redex2} hold for $k= k_1$ and thus
\be \tilde\phi(x_{k_1},\nu_{k_1}) - \tilde\phi(x_{{k_1+1}},\nu_{{k_1+1}}) >  -2(\eps_f + \nu_k\eps_c) \geq -2(\eps_f + \bar\nu\eps_c) = -2{\bar\varepsilon}. \ee

Recalling definition \cref{phiIgZ} and the fact that the $k_1$ iterate is in $C_{gZ}^I(k')$, we have
\be 
\tilde\phi(x_{{k_1+1}},\nu_{{k_1+1}}) 
<  \tilde\phi(x_{k_1},\nu_{k_1}) + 2{\bar\varepsilon}
\stackrel{<}{\mtiny{\labelcref{phi_dist}}} \phi(x_{k_1},\nu_{k_1}) + 3{\bar\varepsilon}
\leq \bar\phi_{gZ}^I(k')+ 3{\bar\varepsilon}. \label{exit_bd_h}\ee

We divide the rest of the proof into two cases based
on whether  $\Delta_{k_1} \geq \hat\Delta_{k''}$ or not.

\medskip
{\bf Assume} $\Delta_{k_1} \geq {\hat{\Delta}_{k''}}$. For each $k = k_1+1, \ldots, k_2-1$, it follows that $x_k \notin C_{gZ}^I(k')$. According to \cref{gwvZ}, this implies $\| (\tg_k +\tW_k v_k )^{T} \tZ_k \| 
        \geq  \mathcal{E}_h + {\bar\gamma}$, so that condition \cref{asmp_h_del} in \cref{tr-lbBO} is satisfied with ${\hat\gamma} = {\bar\gamma}$. Now, for $k \in \{ k_1+1, \ldots, k_2-1\}$ the trust region radius can decrease, but by 
 \cref{tr-lb-hprob}, if at some point $\Delta_k < \hat\Delta_{k''}$ then $\Delta_{k+1} = \tau \Delta_k$. We deduce that $\Delta_k > \frac{\hat{\Delta}_{k''}}{\tau}$ for all  $k \in \{ k_1+1, \ldots, k_2-1\}$.
 We then apply \cref{noisyfuncred_h} 
 to conclude that each accepted step reduces the merit function from $\tilde\phi(x_{{k_1+1}},\nu_{{k_1+1}})$. We have that for each step $k$ after the exiting iterate $k_1+1$,
\be\label{prop3_key1}
\phi(x_{{k}},\nu_{{k}}) \leq \tilde\phi(x_{{k}},\nu_{{k}}) + {\bar\varepsilon} <\tilde\phi(x_{{k_1+1}},\nu_{{k_1+1}})+{\bar\varepsilon} \stackrel{<}{\mtiny{\labelcref{exit_bd_h}}} \bar\phi_{Ac}^I(k')+ 4{\bar\varepsilon} \leq E_{gZ}^{II}. 
\ee
This concludes the proof for when $\Delta_{k_1} \geq {\hat{\Delta}_{k''}}$.

{

\medskip {\bf Assume} $\Delta_{k_1} < \hat\Delta_{k''}$. Using \cref{disp_h}, we can bound the displacement of iterates from $k_1$ to any $k = k_1 +1, \ldots, k_2$. Specifically, by \cref{disp_h}, for $k_1 \leq k \leq k_2$
\be \| x_{k} - x_{k_1} \| \leq \frac{\tau}{\tau-1} \hat\Delta_{k''}. \ee
By $L_f$ and $L_c$--Lipschitz differentiability of the objective and the constraints, respectively, we have for any $k= k_1 , ... , k_2$:
\be 
\begin{split}
f(x_k) - f(x_{k_1}) & \leq \max_{t\in[0,1]}\|g(t x_{k_1} +(1-t) x_k ) \| \|x_k - x_{k_1}\|\\
		      & \leq \left[\|g(x_{k_1})\| + L_f  \| \|x_k - x_{k_1}\|\right] \| \|x_k - x_{k_1}\|\\
		      & \leq \left[\bar G^I_{gZ} + \frac{\tau L_f}{\tau-1}\hat\Delta_{k''}\right] \frac{\tau}{\tau-1}\hat\Delta_{k''}.
\end{split}
\ee
Similarly, for any $k_1 \leq k \leq k_2$, 
\be 
\begin{split}
\|c(x_k)\| - \| c(x_{k_1})\| & \leq \max_{t\in[0,1]}\|\grad c(t x_{k_1} +(1-t) x_k ) \| \|x_k - x_{k_1}\|\\
		      & \leq \left[\| A^T(x_{k_1})c(x_{k_1})\| + L_c  \| \|x_k - x_{k_1}\|\right] \| \|x_k - x_{k_1}\|\\
		      & \leq \left[\mE_{Ac}^{II}  + \frac{\tau L_c}{\tau-1}\hat\Delta_{k''}\right] \frac{\tau}{\tau-1}\hat\Delta_{k''}.
\end{split}
\ee}
Using these two last results and recalling the definition \cref{defi2_h} of $\hat \Delta_{k''}$, and that $k_1\geq k''$ so that the merit parameter settles at $\nu_{k''}$, we find, for any $k_1 \leq k \leq k_2$,

\be \label{prop3_res1}
\begin{split}
&\phi(x_k,\nu_{k}) - \phi(x_{k_1},\nu_{k_1})\\
	& = [ f(x_k) -  f(x_{k_1}) ]+\nu_{k''} [\|c(x_k)\| - \| c(x_{k_1})\|] \\
	& \leq  \left[\bar G^I_{gZ} + \frac{\tau L_f}{\tau-1}\hat\Delta_{k''}\right] \frac{\tau}{\tau-1}\hat\Delta_{k''} + \nu_{k''} \left[\mE_{Ac}^{II} + \frac{\tau L_c}{\tau-1}\hat\Delta_{k''}\right] \frac{\tau}{\tau-1}\hat\Delta_{k''}\\
	& =   \left[   \bar G^I_{gZ}   + \nu_{k''}\mE_{Ac}^{II} +     \left(  L_f + \nu_{k''} L_c\right) \frac{\tau \hat\Delta_{k''}}{\tau-1}  \right] \frac{\tau \hat\Delta_{k''}}{\tau-1}\\
	& =  \left[    \bar G^I_{gZ} + \nu_{k''} \mE_{Ac}^{II} +      \frac{ \tau\Gamma_2   (1-\zeta) }{(1-\tau)\xi  M_L(\bar\nu)}{\bar\gamma}  \right]  \frac{ \tau\Gamma_2   (1-\zeta) }{(1-\tau)\xi  M_L(\bar\nu)}{\bar\gamma}\\
	&= \mathcal{P}_{gZ}^{II}
\end{split}
\ee

Therefore we find for any $k_1 \leq k \leq k_2$,
\be\label{prop3_key2}
\begin{split}
    \phi(x_k,\nu_k) &\leq \phi(x_{k_1},\nu_{k_1}) + \mathcal{P}_{gZ}^{II} \\
                    & \stackrel{\leq}{\mtiny{\labelcref{phi_dist}}}  \tilde\phi(x_{k_1},\nu_{k_1}) + \mathcal{P}_{gZ}^{II} + {\bar\varepsilon} \\
                    & \stackrel{\leq}{\mtiny{\labelcref{exit_bd}}} \bar\phi_{gZ}^I(k')+ \mathcal{P}_{gZ}^{II}+ 4{\bar\varepsilon},\\
                    &\leq E_{gZ}^{II}(k').
\end{split}
\ee
If $x_{k_2}\in C_{gZ}^{I}$, the proof is complete. 

On the other hand, if  $x_{k_2}\notin C_{gZ}^{I}$, we only need to show that condition \cref{prop1_key3} is also satisfied by $k=k_2+1, ... ,\hat K$, where 
\be\hat K = \min\{k\geq k_2+1: x_{k}\in C_{gZ}^{I}  \}.\ee
The existence of $\hat K$ is guaranteed by \cref{prop3}.

For this, we first note in particular, let $k = k_2$ in \cref{prop3_res1}:
\be
\phi(x_{k_2},\nu_{k_2})  \leq \phi(x_{k_1},\nu_{k_1})+  \mathcal{P}_{gZ}^{II};
\ee
with \cref{phi_dist_h} this gives
\be\label{phi_k2_h}
\tilde \phi(x_{k_2},\nu_{k_2})  \leq \phi(x_{k_1},\nu_{k_1})+  \mathcal{P}_{gZ}^{II} + {\bar\varepsilon}\leq \bar\phi_{gZ}^I(k')+  \mathcal{P}_{gZ}^{II} + {\bar\varepsilon},
\ee
where the last inequality is due to the fact that $k_1\in C_{gZ}^{I}$.
Since that iterates have not yet returned into $C_{gZ}^{I}$ at iterate $k_2$, we apply \cref{noisyfuncredBO} for each of the iterates after $k_2$ until iterates return to $C_{gZ}^{I}$ again at iterate $\hat K$ (such iterate exist due to \eqref{prop3}) and obtain that 
\be \tilde\phi(x_{k_2},\nu_{k_2}) > \tilde\phi(x_{k_2+1},\nu_{k_2+1}) > ... > \tilde\phi(x_{\hat K},\nu_{\hat K}). \label{prop3_res2}\ee
Recalling again \eqref{phi_dist_h}, we find that for $k = k_2+1, ... , \hat K$,
\be\label{prop3_key3}
\begin{split}
\phi(x_{k},\nu_{k}) &\leq \tilde\phi(x_{k},\nu_{k}) + e_{k'} \\
                    &\stackrel{<}{\mtiny{\labelcref{prop1_res2}}}  \tilde\phi(x_{k_2},\nu_{k_2}) + {\bar\varepsilon}\\
                    &\stackrel{\leq}{\mtiny{\labelcref{phi_k2}}} \bar\Phi_{Ac}^I(k')+  \mathcal{P}_{Ac}^{II} + 2{\bar\varepsilon}
\end{split}
\ee
We now combine results from \cref{prop3_key1,prop3_key2,prop3_key3} and conclude the proof.
\end{proof}

\subsection{Summary of the Convergence Results}
We now recapitulate the results established in this paper.

\begin{theorem}[Final Result] 
Let $\{x_k\}$ be the sequence generated by Algorithm~1.
If Assumptions~1 through 3 are satisfied, the following two results hold:

(i) [Proposition 1] The sequence $\{x_k\}$ visits infinitely often a critical region $C_{Ac}^I$ where the stationary measure of feasibility is small up to noise level: 
\be \|A(x)^T c(x)\|\leq \mE_{Ac}^I. \ee

(ii) [Proposition 2] Once an iterate enters $C_{Ac}^I$, the rest of the iterates remains in a larger (up to scaling of noise level) critical region $C_{Ac}^{II}$, where \be \Phi(x,\nu)\leq E_{Ac}^{II}. \ee

\noindent
If, in addition, Assumptions 4 and 5 hold, then the sequence of merit parameters $\{\nu_k\}$ remains bounded, and we have:

(iii) [Proposition 3] Once the sequence $\{x_k\}$ enters $C_{Ac}^{II}$, it visits infinitely often a critical region $C_{gZ}^I$, where the projected gradient is small up to noise level:
\be \|g(x)^T Z(x)\|\leq \mE_{gZ}^I+\varepsilon_{gZ}; \ee

(iv) [Proposition 4] After the iterates enter $C_{gZ}^I$ for the first time, they  remain in a larger (up to scaling of noise level) critical region $C_{gZ}^{II}$, where \be \phi(x,\nu)\leq E_{gZ}^{II}; \ee
\end{theorem}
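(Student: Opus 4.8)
The plan is to treat this theorem as a recapitulation: all four parts have already been established as Propositions~\ref{prop1}--\ref{prop4}, so the only work is to fix the auxiliary iterates correctly and to chain the statements together so that the hypotheses of each proposition are supplied by the conclusions of the previous ones. I would begin by fixing, once and for all, an anchor iterate $k'$ (for concreteness one may take $k'=0$), so that the critical regions $C_{Ac}^{I}(k')$, $C_{Ac}^{II}(k')$ and the constants $\mathcal{E}_{Ac}^I$, $E_{Ac}^{II}$ in \eqref{EAcI}--\eqref{EAcII} become fixed objects; their dependence on $\nu_{k'}$ is then inert.

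For part~(i), under Assumptions~1--3, Proposition~\ref{prop1} states precisely that $\{x_k\}$ enters $C_{Ac}^{I}(k')$ infinitely often, and every such iterate satisfies $\|A(x_k)^Tc(x_k)\|\le\mathcal{E}_{Ac}^I$ by definition of that region. For part~(ii), the infinite recurrence in~(i) yields a \emph{finite} first-entry time into $C_{Ac}^{I}(k')\subseteq C_{Ac}^{II}(k')$; from that point on Proposition~\ref{prop2} guarantees the iterates never leave $C_{Ac}^{II}(k')$, which is exactly $\Phi(x,\nu)\le E_{Ac}^{II}$. Thus~(ii) holds for all sufficiently large $k$.

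For parts~(iii)--(iv) I would now add Assumptions~4 and 5. The first step is Lemma~\ref{pred_hred}: the penalty sequence $\{\nu_k\}$ is bounded by $\bar\nu$ and eventually constant, equal to $\nu_{k''}$ for all $k\ge k''$, where $k''$ is the settling iterate. With both $k'$ and $k''$ fixed, the tangential-phase constants $\mathcal{E}_h$, $\Theta$, $\bar\gamma$, $\varepsilon_{gZ}$, $\mathcal{E}_{gZ}^I$, $E_{gZ}^{II}$ are determined. By parts~(i)--(ii) the iterates enter $C_{Ac}^I(k')$ and thereafter remain in $C_{Ac}^{II}(k')$; this is exactly the standing hypothesis of Proposition~\ref{prop3}, which then gives that $\{x_k\}$ visits $C_{gZ}^I$ infinitely often, i.e. $\|g(x)^TZ(x)\|\le\mathcal{E}_{gZ}^I$ (written in the theorem as $\mathcal{E}_{gZ}^I+\varepsilon_{gZ}$, consistent with \eqref{EgZI}). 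Finally, the finite first entry into $C_{gZ}^I$ triggers Proposition~\ref{prop4}, which keeps all subsequent iterates in $C_{gZ}^{II}$, i.e. $\phi(x,\nu)\le E_{gZ}^{II}$.

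The main obstacle is not a genuine difficulty but a bookkeeping hazard: one must introduce $k'$ (arbitrary, fixing the regions) and $k''$ (existing, fixing the merit function) in the correct order, and verify that the chain ``enters region $X$ $\Rightarrow$ remains in region $Y$ $\Rightarrow$ satisfies the hypotheses needed to enter region $X'$'' is logically closed --- in particular that the ``infinitely often'' conclusions of Propositions~\ref{prop1} and~\ref{prop3} do produce actual finite first-entry times, which is immediate from the ``never leaves'' phrasing of Propositions~\ref{prop2} and~\ref{prop4}. A secondary point worth stating explicitly is that the suprema $\bar\Phi_{Ac}^I(k')$, $\bar G^I_{Ac}(k')$, $\bar G^I_{gZ}$, $\mathcal{E}_{Ac}^{II}$ appearing inside $E_{Ac}^{II}$ and $E_{gZ}^{II}$ are finite; this is where any standing boundedness of the relevant sublevel sets (or of the iterate sequence) enters, and I would flag the assumption rather than leave it implicit.
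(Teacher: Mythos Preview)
Your proposal is correct and matches the paper's treatment: the theorem is presented there purely as a recapitulation of Propositions~\ref{prop1}--\ref{prop4} (with the proposition numbers bracketed in the statement itself), and no separate proof is given. Your explicit chaining of the anchor iterate $k'$, the settling iterate $k''$, and the finite first-entry times is exactly the bookkeeping the paper leaves implicit; your flag about finiteness of the suprema $\bar\Phi_{Ac}^I(k')$, $\bar G^I_{Ac}(k')$, etc.\ is a reasonable caveat that the paper does not address.
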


\medskip\noindent
We summarize these results in table~\ref{table:2}.


\begin{table}[ht]
\centering
\begin{tabular}{|c|c|c|}
\hline
 & \makecell{\textbf{Critical Region I} \\ \textit{(stationary measure bounded)}} & \makecell{\textbf{Critical Region II} \\ \textit{(merit function bounded)}} \\ \hline
\makecell{ {\textbf{Feasibility ($A^T c$)}}\\ {(under any conditions)} } & $C^I_{Ac}= \{x | \|A(x)^T c(x)\|\leq\mathcal{E}_{Ac}^I$\} & $C^{II}_{Ac}= \{x |\Phi(x,\nu) \leq E_{Ac}^{II}\}$ \\ \hline
\makecell{ {\textbf{Optimality ($g^T Z$)}}\\ {(when $\tA$ is full rank)} }  & $C^I_{gZ}= \{x |\|g(x)^T Z(x)\|\leq\mathcal{E}_{gZ}^I\}$ & $C^{II}_{gZ}= \{x |\Phi(x,\nu) \leq E_{gZ}^{II}\}$ \\ \hline
\end{tabular}
\caption{Convergence Regions }
\label{table:2}
\end{table}


\noindent \textbf{Remark 3: Role of the Merit Parameter.} 
In the absence of the constraints,  $\nu \equiv 0$, and in this  case, $C_{gZ}^I$ and $C_{gZ}^{II}$ reduce to the regions $C_1$ and $C_2$ in \cite{sun2023stochastic}. In the absence of an objective, by sending the merit parameter to arbitrary large value, the rescaled merit function as used in $C_{Ac}^{II}$ becomes arbitrarily close to $\|c(x)\|$, again recovering a result that is expected of the nonlinear equations only investigation. 


\section{Numerical Results}  \label{numerical}

We tested the robustness of the proposed algorithm in the noisy setting. To this end, we employed  {\sc knitro} \cite{ByrdNoceWalt06}, which contains a careful implementation of the BO algorithm that is accessible by setting
{\tt options.algorithm = 2} ({\sc knitro-cg}). The original BO algorithm in {\sc knitro} was modified by Figen Oztoprak from Artelys Corp. to include, as an option, the modified ratio  \eqref{rhodef} and the ability to input the noise level. The default stopping criteria of {\sc knitro} were used, ensuring consistency across all tests.

We tested problems from the standard CUTEst library \cite{gould2015cutest}, 
accessed via the Python interface. To simulate the noisy settings, we inject randomly generated noise in the objective function, the gradient, Hessian and Jacobian. {For each iterate $x_k$, we sample $\delta_f, \delta_c, \delta_g, \delta_J$ from the uniform distribution  $\mathcal{D}(\eps,m,n)$ with a fixed value $\epsilon$ for the noise in $f, c, g, A$, respectively.
Here  $\mathcal{D}(\epsilon, m, n)$ represents an $m \times n$ matrix-valued distribution, where each element is independently drawn from a one-dimensional distribution $\mathcal{D}$ with support in  $[-\epsilon, \epsilon]$. We also tested noise generated by a Gaussian distribution, with the standard deviation in place of the error bounds, with similar results.}


While we conducted experiments on over 50 equality constraint problems from the CUTEst library, we report  results for three sets of experiments that exemplify the typical behavior observed in our more comprehensive set of experiments. 
The computations were performed on a high-performance workstation with the following specifications: 16 Intel(R) Xeon(R) Silver 4112 CPUs @ 2.60GHz, running on a Linux operating system, and equipped with 200 GB of RAM.


\subsection{Ability to Recover from Small Trust Region}

One potential weakness of trust region methods in a noisy environment occurs when the  radius becomes too small with respect to the noise level in the problem. The iteration may then reject steps, decreasing the trust radius further and ultimately terminating due to lack of progress. To demonstrate this behavior, we 
used problem HS7 and set the initial trust region radius $\Delta_0=10^{-7}$. The noise level was set to $\epsilon_g = \epsilon_A=\epsilon_f=\eps_c=0.1$, roughly a 0.033 relative error compared to the optimality and feasibility gaps at the starting point. 

We report the results in  \cref{HS7}, where the horizontal axis always indicates the number of iterations. 
We conducted three different experiments, superimposing the results to better contrast their differences. (i) We first report the performance of BO when noise is not injected into the functions (solid blue line). This was done by running the unmodified {\sc knitro} code. 
(ii) Next, we introduce noise into the problem but still used the unmodified {\sc knitro} code (i.e. the standard BO method). The results are depicted by the solid orange line.
(iii) Finally, we present the results of {\sc knitro} with our proposed modification as described in Algorithm~1 (solid green line).
We plot a horizontal red dashed line that marks the optimal objective value plus the noise level. 

\begin{figure}[h!]
	\begin{center}
		\includegraphics[width=.7\textwidth]{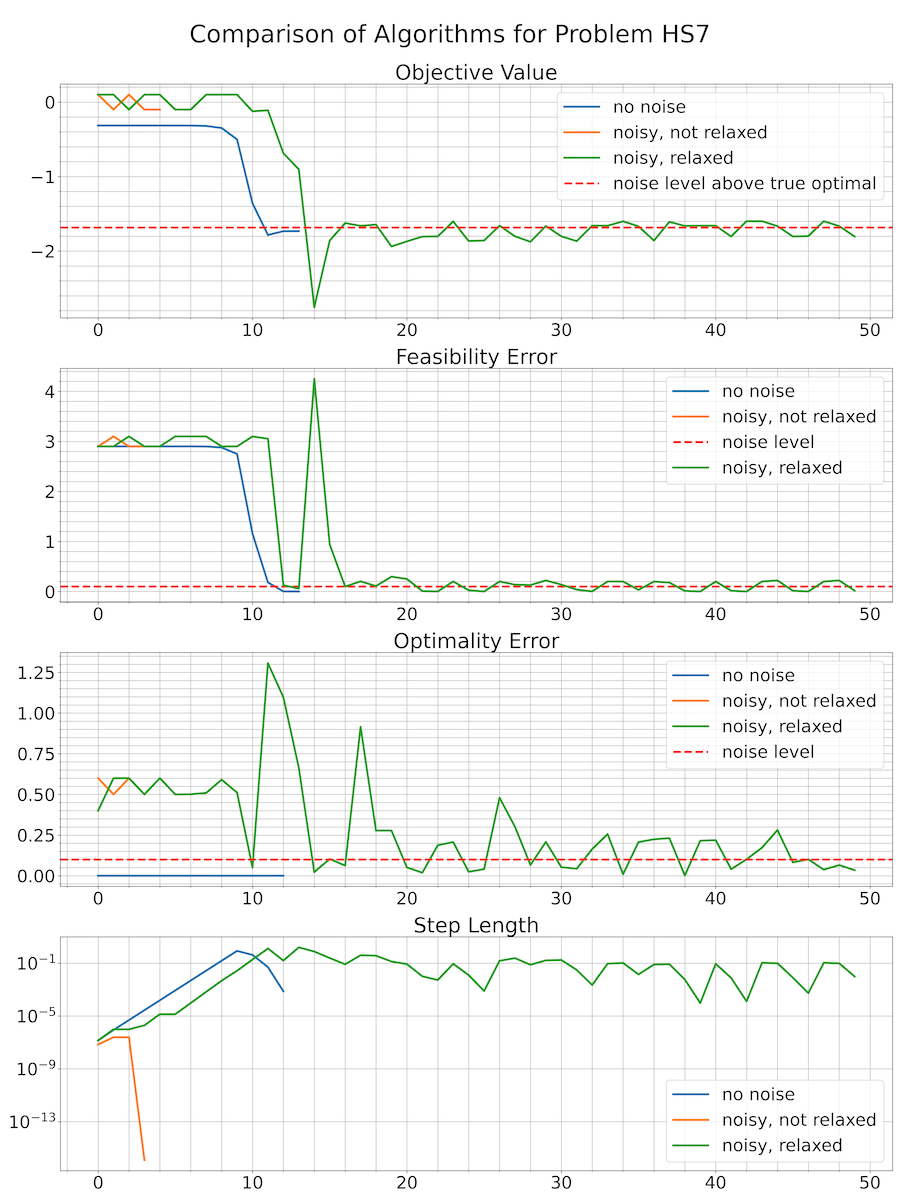}
		\caption{Testing the Byrd-Omojokun algorithm with and without noise, and the modified method.
  }
		\label{HS7}
	\end{center}
\end{figure}
Figure~\ref{HS7}
includes 4 plots reporting the objective function value, feasibility error {$\| c(x_k)\|$, optimality error $\| A_k \lambda_k - g(x_k)\|$, and step length $\| x_{k+1}-x_k\|$}. As can be observed, when the initial trust region radius is small, the unmodified algorithm (orange line) fails to converge because the trust region radius is driven to zero prematurely, while Algorithm~1 proceeds without difficulties.

\subsection{Premature Shrinkage of the Trust Region at Run Time}

{We have observed that the standard BO method may falter when $\Delta_0$ is very small. We now demonstrate that, starting with a sufficiently large trust radius, the algorithm can unnecessarily reduce the trust region during a run, leading to failure. We demonstrate with problem `ROBOT' from the CUTEst, with $\Delta_0=1$, and repeat the set of three experiments as before. The results are presented in \cref{fig:ROBOT}. }


\begin{figure}[h!]
	\begin{center}
		\includegraphics[width=.65\textwidth]{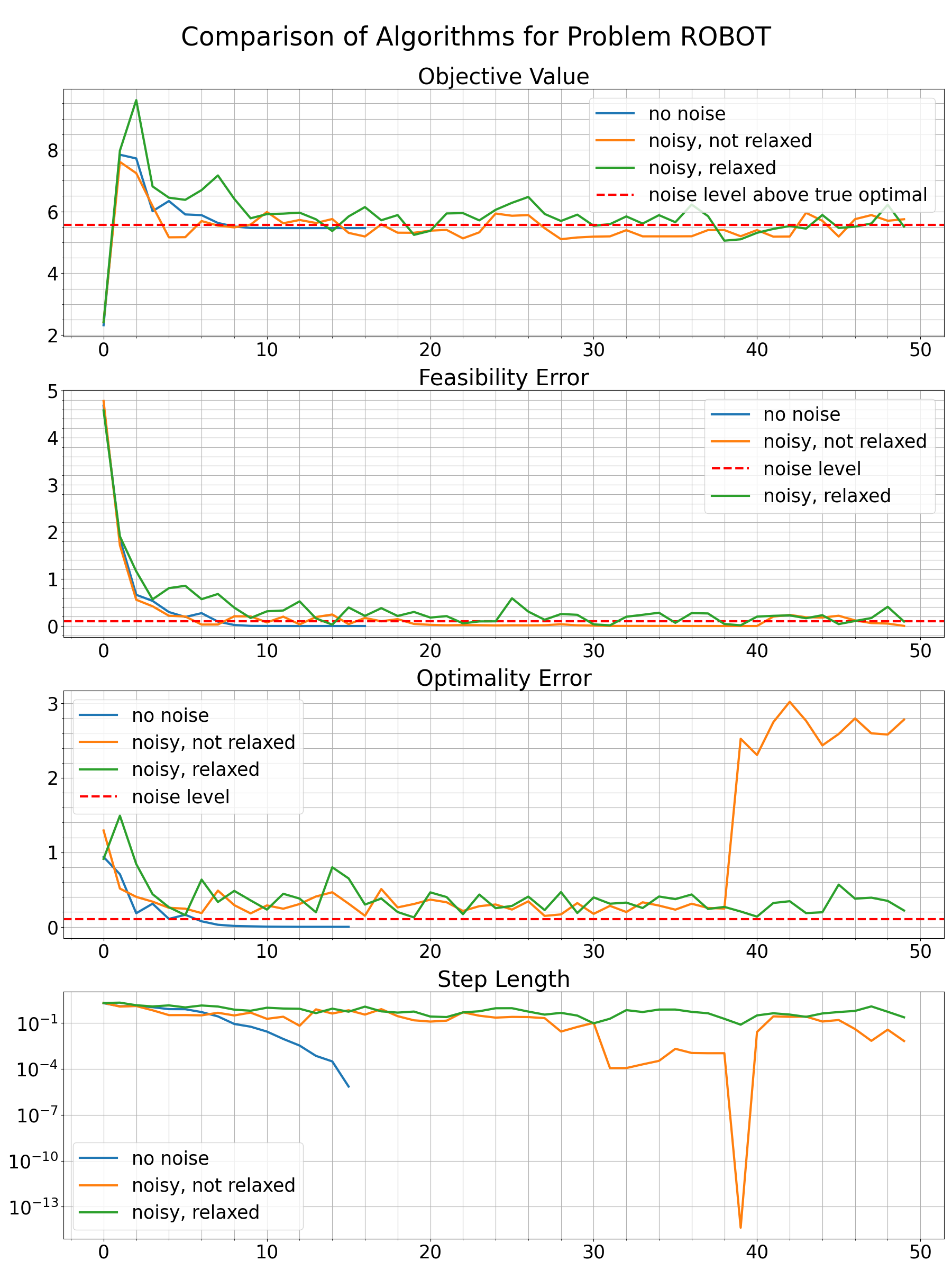}
		\caption{Performance of the algorithms with a sufficiently initial trust region}
		\label{fig:ROBOT}
	\end{center}
\end{figure}

As observed in figure~\ref{fig:ROBOT}, the proposed Algorithm~1 was able to reduce both feasibility and optimality below the noise level, whereas the unmodified algorithm starts shrinking the trust region radius (at around iteration 39) after rejecting many steps due to noise.
Even employing heuristics that restart the trust region, the algorithm makes wrong decisions that result in sharp increases in optimality error. Similar results have been observed for many other test problem during our experimentation. 


\subsection{The Cases where the Unmodified Algorithm  Performs Well}
There are test cases where the unmodified BO algorithm performs well, 
as illustrated in \cref{fig:BYRDSPHR}.
We observe that with noise, both the modified and unmodified algorithms were able to reduce the objective function, feasibility error, and optimality error below the noise level-- although the unmodified algorithm required more iterations and exhibited more oscillations. The unmodified algorithm terminated when the trust region became very small, a behavior that is in fact desirable when the iterates have already reached below the noise level. However, this behavior is brittle, because if shrinkage of the trust region occurs earlier, it can result in a failure to converge as seen above. We conclude from our experiments that the modified algorithm is preferred.

\begin{figure}[h!]
	\begin{center}
		\includegraphics[width=.65\textwidth]{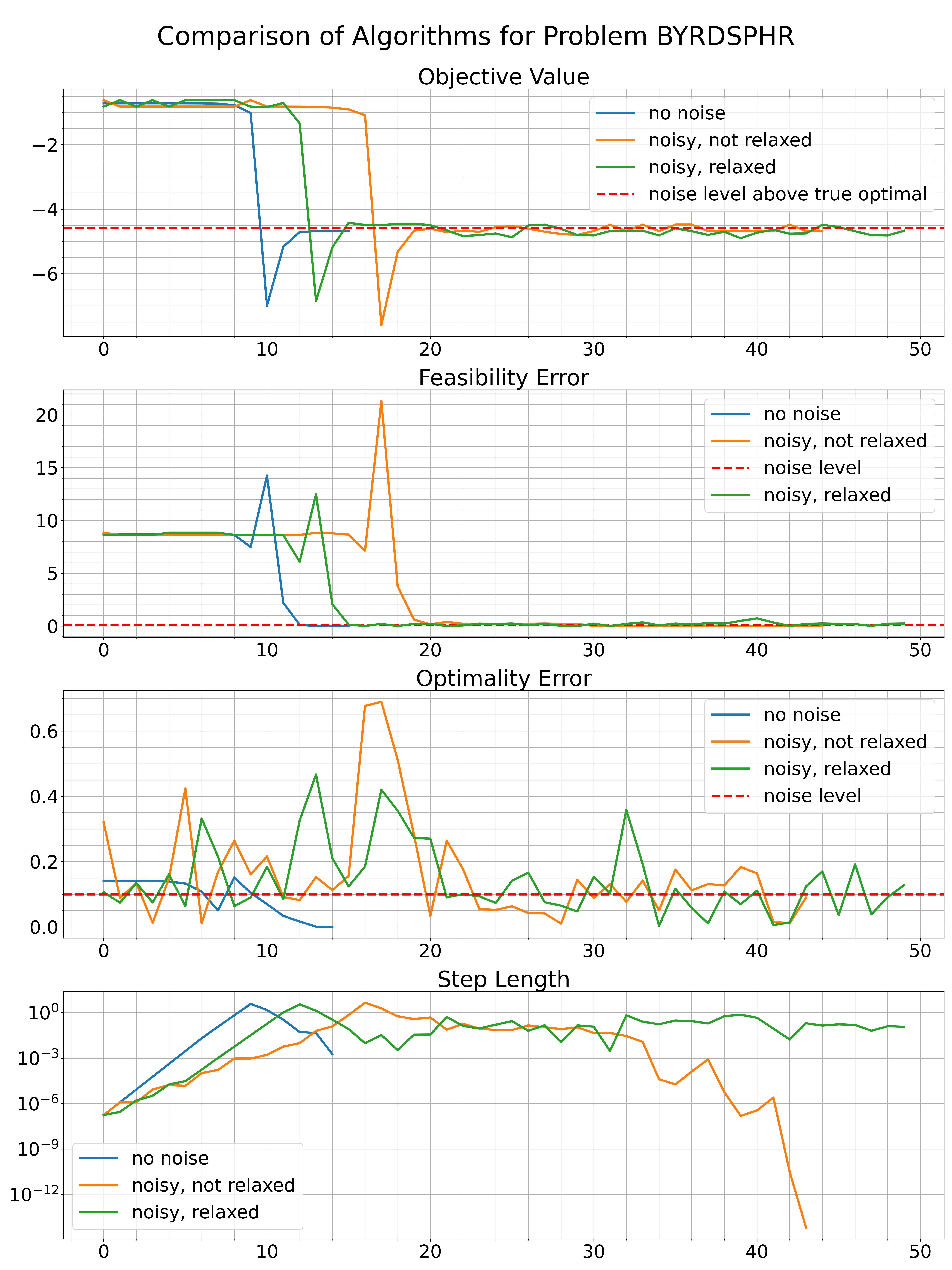}
		\caption{Cutest Problem BYRDSPHR, Initialized with TR = $10^{-7}$}
		\label{fig:BYRDSPHR}
	\end{center}
\end{figure}


\newpage
\section{Final Remarks}

When adapting the Byrd-Omojokun method to problems where the noise level can be estimated, it is not necessary to change the penalty parameter update rule or other components of the algorithm. Only the ration test  \eqref{relaxed} must be safeguarded. This paper presents a comprehensive convergence theory of the noise-tolerant BO method.  The analysis is complex due to the memory nature of trust region methods.
The proposed method has been implemented in the {\sc knitro} software package, and the numerical results reinforce our theoretical findings.



\bigskip\noindent\textit{Acknowledgements.} We are grateful to Richard Byrd for many useful insights and suggestions;  to Figen Oztoprak for implementing the new algorithm within {\sc knitro}; and to  Richard Waltz for help and support during this research. We thank Yuchen Lou for proof reading and useful suggestions.

\bibliographystyle{spmpsci}
\bibliography{references}

\end{document}